\def\tank#1{\protected@xdef\@thanks{\@thanks
        \protect\footnotetext[0]{#1}}}
\def\bigfoot{

    \@footnotetext}
\newcommand{\ea}{\end{array}}
\newtheorem{theorem}{Theorem}[section]
\newtheorem{hypothesis}{Hypothesis}[section]
\newtheorem{lemma}{Lemma}[section]
\newtheorem{definition}{Definition}[section]
\newtheorem{Rem}{Remark}[section]
\newenvironment{proof}{Proof.}
\def \eref#1{\hbox{(\ref{#1})}}
\renewcommand{\d}{d}
\begin{document}
\title{{\Large \bf Freidlin-Wentzell Type Large Deviation Principle for Multi-Scale Locally Monotone SPDEs}}

\author{{Wei Hong$^{a}$},~~{Shihu Li$^{b}$},~~{Wei Liu$^{b,c}$}\footnote{Corresponding author: weiliu@jsnu.edu.cn}
\\
 \small $a.$ Center for Applied Mathematics, Tianjin University, Tianjin 300072, China \\
 \small $b.$ School of Mathematics and Statistics, Jiangsu Normal University, Xuzhou 221116, China\\
  \small $c.$ Research Institute of Mathematical Sciences, Jiangsu Normal University, Xuzhou 221116, China }
\date{}
\maketitle
\begin{center}
\begin{minipage}{145mm}
{\bf Abstract.} This work is concerned with Freidlin-Wentzell type large deviation principle for a family of multi-scale quasilinear and semilinear stochastic partial differential equations. Employing the weak convergence method and Khasminskii's time discretization approach, the Laplace principle (equivalently, large deviation principle) for a general class of multi-scale SPDEs is derived. In particular, we succeed in dropping the compactness assumption of embedding in the
Gelfand triple in order to deal with the case of bounded and unbounded  domains in applications. Our main results are applicable to various multi-scale SPDE models such as stochastic porous media equations, stochastic p-Laplace equations, stochastic fast-diffusion equations, stochastic 2D hydrodynamical type models, stochastic power
law fluid equations and stochastic Ladyzhenskaya models.

\vspace{3mm} {\bf Keywords:} ~SPDE; Multi-scale;~Large deviation principle;~Porous media equation;~Navier-Stokes equation.

\noindent {\bf Mathematics Subject Classification (2010):} {60H15; 60F10}

\end{minipage}
\end{center}

\section{Introduction}
The large deviation principle (LDP) mainly investigates the asymptotic property of remote tails of a family of probability distributions, which is one of important topics in the probability theory and has been widely applied in many fields such as thermodynamics, statistics, information theory and engineering.
We refer the interested readers to the classical monographs \cite{DZ,V1}  for the theory and important applications. Owing to the seminal work of  Freidlin and Wentzell \cite{FW}, the well-known small perturbation type (also called Freidlin-Wentzell type) large deviations for stochastic differential equations has been extensively studied in the recent decades, one might refer to \cite{A,S} and references therein.

There are numerous results concerning the LDP for SPDEs with small perturbation within different frameworks in the literature.
 In the classical paper \cite{Fr}  Freidlin  studied the large deviations
 for the small noise limit of stochastic reaction-diffusion equations. We refer the
reader to Da Prato and Zabczyk \cite{DaZa} or Peszat \cite{P} (also
 the references therein) for the extensions to infinite
 dimensional diffusions or SPDE
under global Lipschitz condition.
For the case of local Lipschitz condition we refer to the work \cite{CR} by Cerrai and R\"{o}ckner. Sowers \cite{S1} studied the LDP for a reaction diffusion equation with non-Gaussian perturbations.   The LDP
for semilinear parabolic equations on a Gelfand triple was studied by Chow in
 \cite{C}. R\"{o}ckner et al. \cite{RWW} established the LDP for stochastic porous media equations in both small noise and small time cases, which is the first LDP result for quasilinear SPDE.
  All of the above-mentioned papers used the classical time discretization method and the
 contraction principle, which was first developed by  Freidlin and Wentzell in \cite{FW}.
 But
 the situation in infinite dimensional case became quite involved and complicated
  since different  nonlinear SPDE needs different
 techniques  to verify some exponential estimate and tightness.

 Recently, the weak convergence method systematically developed by Dupuis, Ellis \cite{DE} and Budhiraja et al.~\cite{BD,BDM} has became a very powerful tool to study the LDP, and it mainly relies on the variational representation formula on certain functionals of Wiener process, moreover, the authors in \cite{BCD} also extended the weak convergence method to the case of stochastic dynamical systems driven by Poisson random measure.
 Compared with the time discretization approach, one main advantage of using weak convergence method  is that, instead of proving exponential probability estimates, one only need to establish some priori moment estimates, which significantly simplifies the proof (see \cite{BGJ,BDM,BM,CG,CM,DWZZ,MSS,MSZ,RZ12,RZ2,SS,XZ} and references therein for the recent progress on LDP for various SPDE models).

The main aim of this work is to investigate the LDP for the following multi-scale stochastic evolution equations (SEEs),
\begin{equation}\label{e0}
\left\{ \begin{aligned}
&dX^{\epsilon,\alpha}_t=\big[A(X^{\epsilon,\alpha}_t)+F_1(X^{\epsilon,\alpha}_t,Y^{\epsilon,\alpha}_t)\big]dt+\sqrt{\epsilon}G_1(X^{\epsilon,\alpha}_t)dW_t,\\
&dY^{\epsilon,\alpha}_t=\frac{1}{\alpha}F_2(X^{\epsilon,\alpha}_t,Y^{\epsilon,\alpha}_t)dt+\frac{1}{\sqrt{\alpha}}G_2dW_t,\\
&X^{\epsilon,\alpha}_0=x,~Y^{\epsilon,\alpha}_0=y,
\end{aligned} \right.
\end{equation}
where $\epsilon>0$, $\{W_t\}_{t\in [0,T]}$ is a cylindrical Wiener process, $\alpha:=\alpha(\epsilon)>0$ represents a small parameter (depending on $\epsilon$) which characterizes the ratio of time-scales between processes $X^{\epsilon,\alpha}_t$ and $Y^{\epsilon,\alpha}_t$, and the coefficients $A,F_1,F_2,G_1,G_2$ fulfill certain hypothesises.
The multi-scale processes including slow and fast components  have attracted more and more attentions due to their widespread applications in many fields such as climate dynamics, chemical kinetics, material science and stochastic mechanics (cf.~\cite{BYY,C2,CF,CL,FW1,K,K1} and more references therein). In fact, many physical systems have certain hierarchy so that not all components evolve at the same rate, i.e. some components changed rapidly while other ones changed very slowly. In these cases, it is natural to ask how the multi-time-scales influence the stochastic dynamical systems, for instance, what is the asymptotic behaviour of the solution to Eq.(\ref{e0}) while $\epsilon\to 0$  ?

For this purpose,  we aim to investigate the asymptotic behavior and establish the LDP for a general class of multi-scale models (\ref{e0}). The LDP for stochastic reaction-diffusion equations involving slow-fast components with small Gaussian perturbation was studied by Wang et al. \cite{WRD}, where they obtained some exponential tight estimates and used the contraction principle and certain approximation to get the LDP. In the work \cite{HSS}, the authors also investigated the LDP for a family of multi-scale stochastic reaction-diffusion equations based on the weak convergence method in infinite dimensions. Recently, using the weak convergence method and classical time discretization approach, Sun et~al.~\cite{SWXY} established the Freidlin-Wentzell type LDP for multi-scale stochastic Burgers equation, where some techniques of stopping time were also employed
 in order to deal with the more complicate non-linear term of the Burgers equation.
 To the best of our knowledge, all of the literatures concerning the LDP for infinite-dimensional multi-scale models used the mild solution approach to tackle different semilinear SPDEs, there is no LDP result for multi-scale quasilinear SPDEs such as the stochastic porous media equations, stochastic fast-diffusion equations and stochastic p-Laplace equations.

In order to investigate the LDP
for the above-mentioned multi-scale quasilinear SPDEs,  we adapt the generalized variational framework in this work which is applicable to a large family of quasilinear and semilinear SPDEs with locally monotone coefficients.  The classical variational framework has been established by Pardoux, Krylov and Rozovskii (see e.g. \cite{KR,LR1,RS}), where they employed the famous monotonicity tricks to verify the existence and uniqueness of
solutions for SPDEs fulfilling the classical monotonicity and coercivity assumptions. In recent years, such framework has been substantially
generalized in \cite{LR1,LR2,LR13} to more general circumstances fulfilling the local monotonicity and generalized coercivity, which cover various semilinear and quasilinear SPDEs such as stochastic porous media equations, stochastic fast-diffusion equations, stochastic 2D Navier-Stokes equations and other hydrodynamical type models, stochastic p-Laplace equations,  stochastic power law fluid equations, and stochastic Ladyzhenskaya models etc. We refer the interested readers to \cite{CM,HLL0,LLX,L1,LRS18,LRSX1,MZ,RZ2,W15,XZ,Z1,Z2} and reference therein for the recent development in such framework.

We want to point out that in the current variational framework one can not follow the main strategy of proofs in \cite{HSS,SWXY,WRD}, where they used the mild solution techniques to obtain some energy estimates and time H\"{o}lder continuity  for the solutions of associated stochastic control equations. Here we need to employ different approach to get the desired moment estimates of solutions to the skeleton equation and stochastic control equations (see (\ref{e2}) and (\ref{e5}) below), which are crucial to prove the compactness of level set of rate function and the convergence in distribution of solutions corresponding to the stochastic control equations. In order to overcome this difficulty appearing in the variational setting, some  stopping time techniques  and Khasminskii's time discretization approach will be employed to obtain some estimates involving different spaces in the Gelfand triple. Another difference is that we want to extend the related works (e.g.~\cite{L1,RZ2,SWXY}) to the case of unbounded domains (e.g.~Poincar\'{e} domains), that is, we do not assume any compactness on the Gelfand triple, see \cite[Section 3]{RZ2} or \cite[Lemma 3.3]{SWXY} for the details. To solve this difficulty, the time discretization approach  will also be adapted to deal with an additional perturbation term of stochastic control problem (see (\ref{e5}) below), which is mainly inspired by the work \cite{CM}.
 Comparing to the works \cite{CM,L1,RZ2,SWXY}, here we extend the LDP result to the  multi-scale case and cover a large number of SPDE models such as the stochastic porous media equations, stochastic fast-diffusion equations, stochastic 2D Navier-Stokes equations and other hydrodynamical type models, stochastic p-Laplace equations,  stochastic power law fluid equations, stochastic Ladyzhenskaya models, etc. To the best of our knowledge, the LDP results for most of the above-mentioned multi-scale models seem to be new in the literature.

The remainder of this paper is organized as follows. In Section 2, we formulate our mathematical models and impose some necessary assumptions on the coefficients. Then we introduce the LDP and Laplace principle with their equivalence and  state the main results of this work. In Section 3, some concrete stochastic models are given to illustrate the applications of our main results. In Section 4, we begin with considering the frozen equation and skeleton equation corresponding to Eq.~(\ref{e1}). We show the exponential ergodicity of frozen equations and investigate certain stochastic control problems with respect to Eq.~(\ref{e1}). Section 5 is devoted to proving the main results.

\section{Main Results}
\setcounter{equation}{0}
 \setcounter{definition}{0}
In this section, we first introduce some notations for the function spaces and operators, and provide the definitions of LDP and Laplace principle with their equivalence. Then we  state the main results of the present paper.
\subsection{Mathematical framework}
Let $(U,\langle\cdot,\cdot\rangle_U)$ and $(H_i, \langle\cdot,\cdot\rangle_{H_i})$, $i=1,2$, be the separable Hilbert spaces, and $H_i^*$ the dual space of $H_i$. Let $V_i$ denote some reflexive Banach space such that the embedding $V_i\subset H_i$ is continuous and dense. Identifying $H_i$  with its dual space in terms of the Riesz isomorphism, we are able to obtain the following Gelfand triples
$$V_i\subset H_i(\cong H_i^*)\subset V_i^*, ~ i=1,2 .$$
The dualization between $V_i$ and $V_i^*$ is denoted by $_{V_i^*}\langle\cdot,\cdot\rangle_{V_i}$. Moreover, it is easy to see that $$_{V_i^*}\langle\cdot,\cdot\rangle_{V_i}|_{{H_i}\times{V_i}}=\langle\cdot,\cdot\rangle_{H_i},~i=1,2.$$
Let $L_2(U,H_i)$ be the space of all Hilbert-Schmidt operators from $U$ to $H_i$.

Now we consider the following two-time-scale stochastic evolution equations on $[0, T]$,
\begin{equation}\label{e1}
\left\{ \begin{aligned}
&dX^{\epsilon,\alpha}_t=\big[A(X^{\epsilon,\alpha}_t)+F_1(X^{\epsilon,\alpha}_t,Y^{\epsilon,\alpha}_t)\big]dt+\sqrt{\epsilon}G_1(X^{\epsilon,\alpha}_t)dW_t,\\
&dY^{\epsilon,\alpha}_t=\frac{1}{\alpha}F_2(X^{\epsilon,\alpha}_t,Y^{\epsilon,\alpha}_t)dt+\frac{1}{\sqrt{\alpha}}G_2dW_t,\\
&X^{\epsilon,\alpha}_0=x,~Y^{\epsilon,\alpha}_0=y,
\end{aligned} \right.
\end{equation}
where
$$
A:V_1\rightarrow V_1^*,~~F_1:H_1\times H_2\to H_1,~~G_1:V_1\to L_2(U,H_1),
$$
and
$$F_2:H_1\times V_2\to V_2^*,~~G_2\in L_2(U,H_2),$$
are some measurable maps,
$\{W_t\}_{t\in [0,T]}$ is an $U$-cylindrical Wiener process defined on a complete filtered probability space $\left(\Omega,\mathscr{F},\mathscr{F}_{t\geq0},\mathbb{P}\right)$ (that is, the path
of $W$ take values in  $C([0,T];U_1)$, where $U_1$ is another
Hilbert space in which the embedding $U\subset U_1$ is
Hilbert--Schmidt).

Suppose that the coefficients of (\ref{e1}) satisfy the following two main hypothesises.
\begin{hypothesis}\label{h1}
For the slow component of Eq.~(\ref{e1}), we assume that there exist constants $\gamma_1>1$, $\beta_1\geq0$, $\theta_1>0$ and $K,C>0$ such that for all $u,v,w\in V_1$, $u_1,u_2\in H_1$ and $v_1,v_2\in H_2$, we have
\begin{enumerate}
\item [$({\mathbf{A}}{\mathbf{1}})$](Hemicontinuity) The map $\lambda\mapsto_{V_1^*}\langle A(u+\lambda v),w\rangle_{V_1}$ is continuous on $\mathbb{R}$.
\item [$({\mathbf{A}}{\mathbf{2}})$](Local monotonicity and Lipschitz)
\begin{eqnarray*}
  &&~~~2_{V_1^*}\langle A(u)-A(v), u-v\rangle_{V_1}+
 \|G_1(u)-G_1(v)\|_{L_2(U,H_1)}^2\\
     &&\leq -\theta_1\|u-v\|_{V_1}^{\gamma_1}+
     (K+\rho(v))\|u-v\|_{H_1}^2,
\end{eqnarray*}
where $\rho : V_1\rightarrow[0,+\infty)$ is a measurable and
locally bounded function on $V_1$ and satisfies
$$\rho(v)\leq C(1+\|v\|_{V_1}^{\gamma_1})(1+\|v\|_{H_1}^{\beta_1}).$$
Moreover,
\begin{equation*}
\|F_1(u_1,v_1)-F_1(u_2,v_2)\|_{H_1}\leq C\big(\|u_1-u_2\|_{H_1}+\|v_1-v_2\|_{H_2}\big).
\end{equation*}
and
\begin{equation*}
\|G_1(u)-G_1(v)\|_{L_2(U,H_1)}\leq C\|u-v\|_{H_1}.
\end{equation*}

\item [$({\mathbf{A}}{\mathbf{3}})$](Growth)
\begin{equation*}
\|A(u)\|_{V_1^*}^{\frac{\gamma_1}{\gamma_1-1}}\leq C(1+\|u\|_{V_1}^{\gamma_1})(1+\|u\|_{H_1}^{\beta_1}).
\end{equation*}
\end{enumerate}
\end{hypothesis}

\begin{hypothesis}\label{h2}
For the fast component of Eq.~(\ref{e1}), we assume that there exist constants $\gamma_2>1$, $\beta_2\geq0$, $\kappa,\theta_2>0$ and $C>0$ such that for all $v_1,v_2,v,w\in V_2$, $u_1,u_2,u\in H_1$, we have
\begin{enumerate}
\item [$({\mathbf{H}}{\mathbf{1}})$](Hemicontinuity) The map $\lambda\mapsto_{V_2^*}\langle F_2(u_1+\lambda u_2,v_1+\lambda v_2),w\rangle_{V_2}$ is continuous on $\mathbb{R}$.
\item [$({\mathbf{H}}{\mathbf{2}})$](Strict monotonicity)
\begin{eqnarray}\label{h6}
2_{V_2^*}\langle F_2(u_1,v_1)-F_2(u_1,v_2),v_1-v_2\rangle_{V_2}\leq-\kappa\|v_1-v_2\|_{H_2}^2.
\end{eqnarray}
Moreover,
\begin{equation}\label{h3}
_{V_2^*}\langle F_2(u_1,v)-F_2(u_2,v),w\rangle_{V_2}\leq C\|u_1-u_2\|_{H_1}\|w\|_{H_2}.
\end{equation}

\item [$({\mathbf{H}}{\mathbf{3}})$](Coercivity)
\begin{equation*}
_{V_2^*}\langle F_2(u,v),v\rangle_{V_2}\leq C\|v\|_{H_2}^2-\theta_2\|v\|_{V_2}^{\gamma_2}+C(1+\|u\|_{H_1}^2).
\end{equation*}

\item [$({\mathbf{H}}{\mathbf{4}})$](Growth)
\begin{equation*}
\|F_2(u,v)\|_{V_2^*}^{\frac{\gamma_2}{\gamma_2-1}}\leq C(1+\|v\|_{V_2}^{\gamma_2})(1+\|v\|_{H_2}^{\beta_2})+C\|u\|_{H_1}^2.
\end{equation*}
\end{enumerate}
\end{hypothesis}

\begin{Rem}\label{r1}
(i) By $({\mathbf{A}}{\mathbf{2}})$-$({\mathbf{A}}{\mathbf{3}})$,  the coercivity condition of $A$ and
$G_1$ can be obtained as
$$2{ }_{V_1^*}\langle A(u),
 u\rangle_{V_1}+\|G_1(u)\|^{2}_{L_2(U,H_1)}\leq-\frac{\theta_1}{2}\|u\|_{V_1}^{\gamma_1}
    + C(1+ \|u\|_{H_1}^2).$$
(ii) The assumption (\ref{h6}) is called strictly monotone condition, which ensures the existence and uniqueness of invariant probability measure and the exponential ergodicity for the frozen equation (see Eq.~(\ref{e3}) below) corresponding to the fast component of (\ref{e1}). A typical example satisfying  Hypothesis \ref{h2} is the stochastic reaction-diffusion type equations, for instance, let $V_2:=W_0^{1,2}$ and $H_2:=L^2$,
$$F_2(u,v):=B(u)+\Delta v+c_1v-c_2v^3,~~ u\in H_1,~v\in V_2,$$
where $c_1,c_2\geq 0$ are some constants and map $B: H_1\to H_2$ is Lipschitz.
\end{Rem}

The definition of solution to (\ref{e1}) is stated as follows.
\begin{definition}\label{d1}
For any $\epsilon,\alpha>0$, we call a continuous $H_1\times H_2$-valued $(\mathscr{F}_t)_{t\geq 0}$-adapted process $(X^{\epsilon,\alpha}_t,Y^{\epsilon,\alpha}_t)_{t\in[0,T]}$ is a solution of (\ref{e1}), if for its $dt\times \mathbb{P}$-equivalent class $(\hat{X}^{\epsilon,\alpha}_t,\hat{Y}^{\epsilon,\alpha}_t)_{t\in[0,T]}$ satisfying
\begin{equation*}
\hat{X}^{\epsilon,\alpha}\in L^{\gamma_1}\big([0,T]\times\Omega,dt\times\mathbb{P};V_1\big)\cap L^2\big([0,T]\times\Omega,dt\times\mathbb{P};H_1\big),
\end{equation*}
\begin{equation*}
\hat{Y}^{\epsilon,\alpha}\in L^{\gamma_2}\big([0,T]\times\Omega,dt\times\mathbb{P};V_2\big)\cap L^2\big([0,T]\times\Omega,dt\times\mathbb{P};H_2\big),
\end{equation*}
where $\gamma_1,\gamma_2$ is the same as defined in $({\mathbf{A}}{\mathbf{2}})$ and $({\mathbf{H}}{\mathbf{3}})$, respectively, and $\mathbb{P}$-a.s.
\begin{equation*}
\left\{ \begin{aligned}
&d\bar{X}^{\epsilon,\alpha}_t=\big[A(\bar{X}^{\epsilon,\alpha}_t)+F_1(\bar{X}^{\epsilon,\alpha}_t,Y^{\epsilon,\alpha}_t)\big]dt+\sqrt{\epsilon}G_1(\bar{X}^{\epsilon,\alpha}_t)dW_t,\\
&d\bar{Y}^{\epsilon,\alpha}_t=\frac{1}{\alpha}F_2(\bar{X}^{\epsilon,\alpha}_t,\bar{Y}^{\epsilon,\alpha}_t)dt+\frac{1}{\sqrt{\alpha}}G_2dW_t,
\end{aligned} \right.
\end{equation*}
here $\bar{X}^{\epsilon,\alpha}$ (resp. $\bar{Y}^{\epsilon,\alpha}$) is any $V_1$ (resp. $V_2$) valued progressively measurable $dt\times\mathbb{P}$-version of $\hat{X}^{\epsilon,\alpha}$ (resp. $\hat{Y}^{\epsilon,\alpha}$).
\end{definition}

Following the similar calculations as in the proof of \cite[Theorem 2.3]{LRSX1}, the existence and uniqueness of solutions to system (\ref{e1}) can be formulated as follows.
\begin{lemma}
Suppose that the assumptions $({\mathbf{A}}{\mathbf{1}})$-$({\mathbf{A}}{\mathbf{3}})$ and $({\mathbf{H}}{\mathbf{1}})$-$({\mathbf{H}}{\mathbf{4}})$ hold. For each $\epsilon,\alpha>0$ and starting point $(x,y)\in H_1\times H_2$, Eq.~(\ref{e1}) has a unique solution $(X^{\epsilon,\alpha}_t, Y^{\epsilon,\alpha}_t)_{t\in[0,T]}$.
\end{lemma}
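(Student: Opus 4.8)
\emph{Proof sketch.} The plan is to recast the coupled system (\ref{e1}) as a single stochastic evolution equation on a product Gelfand triple and then to apply the abstract well-posedness theorem for locally monotone coefficients, \cite[Theorem 2.3]{LRSX1}. Fixing $\epsilon,\alpha>0$, I would work on
$$\mathbb{V}:=V_1\times V_2\ \subset\ \mathbb{H}:=H_1\times H_2\ \subset\ \mathbb{V}^*=V_1^*\times V_2^*,$$
which inherits the Gelfand triple structure ($\mathbb{V}$ reflexive, $\mathbb{V}\subset\mathbb{H}$ continuous and dense), and define the combined drift and diffusion
$$\mathcal{A}(x,y):=\big(A(x)+F_1(x,y),\ \alpha^{-1}F_2(x,y)\big),\qquad \mathcal{B}(x,y)u:=\big(\sqrt{\epsilon}\,G_1(x)u,\ \alpha^{-1/2}G_2u\big),$$
so that (\ref{e1}) reads $dZ_t=\mathcal{A}(Z_t)\,dt+\mathcal{B}(Z_t)\,dW_t$ with $Z_t=(X^{\epsilon,\alpha}_t,Y^{\epsilon,\alpha}_t)$ and $Z_0=(x,y)$. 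Here $\mathcal{A}$ takes values in $\mathbb{V}^*$ because $F_1$ maps into $H_1\hookrightarrow V_1^*$, and $\mathcal{B}(x,y)\in L_2(U,\mathbb{H})$ since both components are Hilbert--Schmidt; with $\epsilon,\alpha$ fixed these prefactors are harmless constants.

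The core of the argument is to verify the four structural conditions (hemicontinuity, local monotonicity, coercivity, growth) for $(\mathcal{A},\mathcal{B})$ out of Hypotheses \ref{h1}--\ref{h2}. Hemicontinuity is immediate from $(\mathbf{A1})$, $(\mathbf{H1})$ and the Lipschitz continuity of $F_1$. For local monotonicity I would expand
$$2{}_{\mathbb{V}^*}\langle\mathcal{A}(z_1)-\mathcal{A}(z_2),z_1-z_2\rangle_{\mathbb{V}}+\|\mathcal{B}(z_1)-\mathcal{B}(z_2)\|^2_{L_2(U,\mathbb{H})},\qquad z_i=(x_i,y_i),$$
into its $A$-, $F_1$-, $F_2$- and $G_1$-contributions: the $A+G_1$ part is controlled by $(\mathbf{A2})$ (with the factor $\epsilon$ absorbed via $\|G_1(x_1)-G_1(x_2)\|^2\le C\|x_1-x_2\|^2_{H_1}$), the $F_1$ part by its Lipschitz bound and Young's inequality, and the $F_2$ part by splitting $F_2(x_1,y_1)-F_2(x_2,y_2)=[F_2(x_1,y_1)-F_2(x_1,y_2)]+[F_2(x_1,y_2)-F_2(x_2,y_2)]$ and applying the strict monotonicity (\ref{h6}) to the first piece (which is $\le 0$ and may be discarded) and (\ref{h3}) plus Young to the second. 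This yields a bound of the form $-\theta_1\|x_1-x_2\|_{V_1}^{\gamma_1}+(K_{\epsilon,\alpha}+\rho(x_2))\|z_1-z_2\|_{\mathbb{H}}^2$, where $(x,y)\mapsto\rho(x)$ still satisfies the growth control required by the framework since $\|x\|_{V_1}\le\|(x,y)\|_{\mathbb{V}}$. Coercivity follows by combining Remark~\ref{r1}(i) with $(\mathbf{H3})$ and Young's inequality on the $F_1$-term, giving
$$2{}_{\mathbb{V}^*}\langle\mathcal{A}(z),z\rangle_{\mathbb{V}}+\|\mathcal{B}(z)\|^2_{L_2(U,\mathbb{H})}\le -\tfrac{\theta_1}{2}\|x\|_{V_1}^{\gamma_1}-\tfrac{2\theta_2}{\alpha}\|y\|_{V_2}^{\gamma_2}+C_{\epsilon,\alpha}\big(1+\|z\|_{\mathbb{H}}^2\big),$$
while the growth bound on $\|\mathcal{A}(z)\|_{\mathbb{V}^*}$ comes from $(\mathbf{A3})$, $(\mathbf{H4})$ and the linear growth of $F_1$.

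The point that needs care — and the reason \cite[Theorem 2.3]{LRSX1} rather than the basic variational result is the right reference — is that the coercivity estimate carries two possibly distinct exponents $\gamma_1,\gamma_2$ on the $V_1$- and $V_2$-components (and two growth parameters $\beta_1,\beta_2$), so the whole argument must be run inside the multi-exponent version of the generalized variational framework; once this is arranged, \cite[Theorem 2.3]{LRSX1} applies and produces a unique solution $Z=(X^{\epsilon,\alpha},Y^{\epsilon,\alpha})$ with continuous $\mathbb{H}$-valued paths lying in $L^{\gamma_1}([0,T]\times\Omega;V_1)\cap L^{\gamma_2}([0,T]\times\Omega;V_2)\cap L^2([0,T]\times\Omega;\mathbb{H})$, which is precisely Definition~\ref{d1}. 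I expect the remaining work to be just the bookkeeping of the Young-type estimates indicated above, entirely parallel to \cite{LRSX1,LR1,LR2}, so the details can be omitted.
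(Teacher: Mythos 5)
Your proposal is correct and follows essentially the same route as the paper, which gives no独立 argument of its own but simply invokes the calculations behind \cite[Theorem 2.3]{LRSX1}, i.e.\ exactly the locally monotone variational machinery (hemicontinuity, local monotonicity, coercivity, growth for the coupled system) that you verify. Your only extra wrinkle is packaging the system as one equation on the product triple $V_1\times V_2\subset H_1\times H_2\subset V_1^*\times V_2^*$, and you correctly flag that the two distinct exponents $\gamma_1,\gamma_2$ force you into the multi-exponent (slow--fast) version of the framework, which is precisely what \cite[Theorem 2.3]{LRSX1} provides for the coupled system directly.
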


Let us now recall some definitions and classical results of
LDP. Let $\{X^\varepsilon\}$ denote a family of
random variables defined on a probability space
$(\Omega,\mathscr{F},\mathbb{P})$ taking values in a Polish
space $E$. Shortly speaking, the LDP
characterizes the exponential decay of the probability
distributions with respect to certain kinds of extreme or remote tail events. The rate of
such exponential decay is described by the ``rate function".

\begin{definition}(Rate function) A function $I: E\to [0,+\infty)$ is called
a rate function if $I$ is lower semicontinuous. Moreover, a rate function $I$
is called a {\it good rate function} if  the level set $\{x\in E: I(x)\le
K\}$ is compact for each constant $K<\infty$.
\end{definition}

\begin{definition}(Large deviation principle) The random variable sequence
 $\{X^\varepsilon\}$ is said to satisfy
 the LDP on $E$ with rate function
 $I$ if  the following lower and upper bound conditions hold,

(i) (Lower bound) For any open set $G\subset E$:
$$\liminf_{\varepsilon\to 0}
   \varepsilon \log \mathbb{P}(X^{\varepsilon}\in G)\geq -\inf_{x\in G}I(x).$$

(ii) (Upper bound) For any closed set $F\subset E$:
$$ \limsup_{\varepsilon\to 0}
   \varepsilon \log \mathbb{P}(X^{\varepsilon}\in F)\leq
  -\inf_{x\in F} I(x).
$$
\end{definition}

Now we recall the equivalence between
the LDP and the Laplace principle which is defined as follows (cf. \cite{C1,DE,DZ}).

\begin{definition}\label{d1}(Laplace principle) The sequence $\{X^\varepsilon\}$ is
said to satisfy the Laplace principle on $E$ with a rate function
$I$ if for each bounded continuous real-valued function $h$ defined
on $E$, we have
$$\lim_{\varepsilon\to 0}\varepsilon \log \mathbb{E}\left\lbrace
 \exp\left[-\frac{1}{\varepsilon} h(X^{\varepsilon})\right]\right\rbrace
= -\inf_{x\in E}\left\{h(x)+I(x)\right\}.$$
\end{definition}

\begin{lemma}\label{l1}
(Varadhan's Lemma \cite{V1}) Let $E$ be a Polish space and an $E$-valued random sequence $\{X^\varepsilon\}$ fulfills the LDP with a good rate function $I$. Then $\{X^\varepsilon\}$ fulfills the Laplace principle on $E$ with the same rate function $I$.
\end{lemma}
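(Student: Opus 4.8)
The plan is to prove the two matching asymptotic inequalities that together constitute the Laplace principle; this is the classical argument of Varadhan, and it uses only the LDP together with the goodness of $I$. Write $M:=\sup_{x\in E}|h(x)|<\infty$ and $\Lambda:=\inf_{x\in E}\{h(x)+I(x)\}$, which is finite because $I$ takes values in $[0,+\infty)$ and $h$ is bounded. It suffices to establish
$$\liminf_{\varepsilon\to0}\varepsilon\log\mathbb{E}\Big[e^{-h(X^{\varepsilon})/\varepsilon}\Big]\ \ge\ -\Lambda\ \ge\ \limsup_{\varepsilon\to0}\varepsilon\log\mathbb{E}\Big[e^{-h(X^{\varepsilon})/\varepsilon}\Big].$$

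For the lower estimate, fix $x_{0}\in E$ and $\delta>0$. By continuity of $h$ there is an open set $G$ with $x_{0}\in G$ and $h\le h(x_{0})+\delta$ on $G$, so that
$$\mathbb{E}\Big[e^{-h(X^{\varepsilon})/\varepsilon}\Big]\ \ge\ \mathbb{E}\Big[e^{-h(X^{\varepsilon})/\varepsilon}\mathbf{1}_{\{X^{\varepsilon}\in G\}}\Big]\ \ge\ e^{-(h(x_{0})+\delta)/\varepsilon}\,\mathbb{P}(X^{\varepsilon}\in G).$$
Multiplying by $\varepsilon$, taking logarithms, sending $\varepsilon\to0$, and using the LDP lower bound on the open set $G$ (together with $x_{0}\in G$, so $\inf_{G}I\le I(x_{0})$) gives $\liminf_{\varepsilon\to0}\varepsilon\log\mathbb{E}[e^{-h(X^{\varepsilon})/\varepsilon}]\ge -(h(x_{0})+\delta)-I(x_{0})$. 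Letting $\delta\downarrow0$ and taking the supremum over $x_{0}\in E$ yields the first inequality.

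For the upper estimate, fix $\delta>0$ and $L>0$, and recall that the level set $K_{L}:=\{x:I(x)\le L\}$ is compact since $I$ is a good rate function. For each $x\in K_{L}$, using continuity of $h$ and lower semicontinuity of $I$, choose an open neighbourhood $U_{x}$ of $x$ whose closure satisfies $h\ge h(x)-\delta$ and $I\ge I(x)-\delta$ on $\overline{U_{x}}$ (possible in the metrizable space $E$ by taking a small closed ball inside the open set on which both strict inequalities hold). Extract a finite subcover $K_{L}\subset\bigcup_{i=1}^{N}U_{x_{i}}$ and put $F_{0}:=E\setminus\bigcup_{i=1}^{N}U_{x_{i}}$, a closed set disjoint from $K_{L}$, hence $\inf_{F_{0}}I\ge L$. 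Splitting the expectation according to whether $X^{\varepsilon}$ lies in some $U_{x_{i}}$ or in $F_{0}$, bounding $-h(X^{\varepsilon})$ by $-h(x_{i})+\delta$ and $\mathbf{1}_{\{X^{\varepsilon}\in U_{x_{i}}\}}$ by $\mathbf{1}_{\{X^{\varepsilon}\in\overline{U_{x_{i}}}\}}$ on the former event, and using $|h|\le M$ on $F_{0}$,
$$\mathbb{E}\Big[e^{-h(X^{\varepsilon})/\varepsilon}\Big]\ \le\ \sum_{i=1}^{N}e^{(-h(x_{i})+\delta)/\varepsilon}\,\mathbb{P}\big(X^{\varepsilon}\in\overline{U_{x_{i}}}\big)\ +\ e^{M/\varepsilon}\,\mathbb{P}(X^{\varepsilon}\in F_{0}).$$
Since there are only $N+1$ summands, $\limsup_{\varepsilon\to0}\varepsilon\log(\cdot)$ of the sum equals the maximum of the individual $\limsup$'s; applying the LDP upper bound on each closed set $\overline{U_{x_{i}}}$ and on $F_{0}$ gives
$$\limsup_{\varepsilon\to0}\varepsilon\log\mathbb{E}\Big[e^{-h(X^{\varepsilon})/\varepsilon}\Big]\ \le\ \max\Big\{-\Lambda+2\delta,\ M-L\Big\}.$$
Letting $L\to\infty$ and then $\delta\downarrow0$ gives the second inequality, and the two together are precisely the Laplace principle with rate function $I$.

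A word on where the hypotheses enter and what is delicate. The lower bound uses only the LDP lower bound and continuity of $h$, and is routine. The upper bound is the real point: the goodness of $I$ is exactly what permits the reduction to a \emph{finite} subcover of the compact level set $K_{L}$, and the boundedness of $h$ is what makes the contribution of the far tail $F_{0}$ — on which no information about $h$ is available beyond $|h|\le M$ — negligible once $L$ is taken large. Apart from this, the argument is bookkeeping; so the only genuine obstacle is the interplay between the compact level sets and the boundedness of $h$ in controlling the tail, which is handled by the two-parameter $(\delta,L)$ limit above.
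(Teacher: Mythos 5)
Your proof is correct: it is the classical Varadhan/Laplace-principle argument (lower bound from the LDP lower bound on a small open set where $h$ is nearly $h(x_0)$; upper bound by covering the compact level set $\{I\le L\}$ with finitely many closed neighbourhoods controlling both $h$ and $I$ up to $\delta$, and discarding the complement using $|h|\le M$ and $\inf I\ge L$ there). The paper itself gives no proof of this lemma — it simply cites Varadhan — and your argument is exactly the standard one found in the cited references, so there is nothing to reconcile.
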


\begin{lemma}\label{l2}
(Bryc's converse \cite{DZ}) The Laplace principle implies the LDP with the same good rate function.
\end{lemma}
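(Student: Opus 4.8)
The plan is to deduce the two bounds in the definition of the LDP directly from the Laplace principle, by testing the prelimit Laplace functionals against suitable bounded continuous functions on $E$; only lower semicontinuity of $I$ is needed for the lower bound, whereas the goodness of $I$ enters essentially in the upper bound.

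For the lower bound I would argue as follows. Fix an open set $G\subset E$ and a point $x\in G$ with $I(x)<\infty$ (if no such point exists the right--hand side is $-\infty$ and the bound is trivial). Choose $r>0$ with $B(x,r)\subset G$ and, for $L>0$, take the bounded continuous function $h_L(z):=L\big(1\wedge r^{-1}d(z,x)\big)$, which vanishes at $x$ and equals $L$ on $G^c$. The pointwise inequality $e^{-h_L(z)/\varepsilon}\le\mathbf{1}_G(z)+e^{-L/\varepsilon}\mathbf{1}_{G^c}(z)$ gives $\EE\big[e^{-h_L(X^\varepsilon)/\varepsilon}\big]\le\PP(X^\varepsilon\in G)+e^{-L/\varepsilon}$, so applying the Laplace principle to $h_L$ together with the elementary identity $\liminf_{\varepsilon\to0}\varepsilon\log(a_\varepsilon+e^{-L/\varepsilon})=\max\{\liminf_{\varepsilon\to0}\varepsilon\log a_\varepsilon,\,-L\}$ (valid for $a_\varepsilon\ge0$) yields
$$-I(x)\ \le\ -\inf_{z\in E}\big\{h_L(z)+I(z)\big\}\ =\ \lim_{\varepsilon\to0}\varepsilon\log\EE\big[e^{-h_L(X^\varepsilon)/\varepsilon}\big]\ \le\ \max\Big\{\liminf_{\varepsilon\to0}\varepsilon\log\PP(X^\varepsilon\in G),\ -L\Big\}.$$
Choosing $L>I(x)$ forces the maximum to be attained at the first term, so $\liminf_{\varepsilon\to0}\varepsilon\log\PP(X^\varepsilon\in G)\ge-I(x)$, and taking the supremum over $x\in G$ finishes this half.

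For the upper bound, fix a closed set $F\subset E$; it suffices to show $\limsup_{\varepsilon\to0}\varepsilon\log\PP(X^\varepsilon\in F)\le-M$ for each $M$ with $0\le M<\inf_{x\in F}I(x)$ and then let $M\uparrow\inf_{x\in F}I(x)$ (the case $\inf_F I=0$ being trivial since $\PP(X^\varepsilon\in F)\le1$). Here I would use the goodness of $I$: the sublevel set $K_M:=\{z\in E:I(z)\le M\}$ is compact and disjoint from the closed set $F$ (and nonempty, since applying the Laplace principle to $h\equiv0$ forces $\inf_E I=0$, whence $\{I=0\}\subset K_M$ by compactness of the nested sets $\{I\le n^{-1}\}$), so $\delta:=d(F,K_M)>0$. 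Taking the bounded Lipschitz function $h(z):=M\big(1-\delta^{-1}d(z,K_M)\big)^{+}$ — which equals $M$ on $K_M$ and $0$ on $F$ — one has $\sup_F h=0$, and distinguishing $z\in K_M$ from $z\notin K_M$ (where $I(z)>M$) gives $\inf_{z\in E}\{h(z)+I(z)\}\ge M$; then $\PP(X^\varepsilon\in F)\le e^{\sup_F h/\varepsilon}\,\EE\big[e^{-h(X^\varepsilon)/\varepsilon}\big]$ and the Laplace principle applied to $h$ give
$$\limsup_{\varepsilon\to0}\varepsilon\log\PP(X^\varepsilon\in F)\ \le\ \sup_{z\in F}h(z)-\inf_{z\in E}\big\{h(z)+I(z)\big\}\ \le\ 0-M\ =\ -M.$$

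The cut-off constructions and the $\varepsilon\log(\cdot)$ bookkeeping are routine; the hard part — and the only place where the \emph{goodness} of $I$ is genuinely used — is the upper bound, where compactness of the sublevel set $K_M$ together with the closedness of $F$ is exactly what delivers the strictly positive separation $\delta=d(F,K_M)>0$ underlying the test function $h$. Without compactness of sublevel sets the upper bound can fail, whereas the lower bound uses nothing beyond lower semicontinuity of $I$.
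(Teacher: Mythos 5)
Your argument is correct and complete. Note that the paper does not prove this lemma at all: it is quoted from Dembo--Zeitouni \cite{DZ}, so the only meaningful comparison is with the standard textbook proof of the inverse Varadhan (Bryc) lemma. Your lower bound is exactly the classical one (testing against $h_L=L\bigl(1\wedge r^{-1}d(\cdot,x)\bigr)$ and absorbing the $e^{-L/\varepsilon}$ term), and indeed it uses nothing about $I$ beyond $I(x)<\infty$. For the upper bound the usual route (e.g.\ Dupuis--Ellis, Theorem 1.2.3) tests against the sequence $h_j=j\bigl(d(\cdot,F)\wedge 1\bigr)$ and then proves $\inf_E\{h_j+I\}\to\inf_F I$ by a compactness-plus-lower-semicontinuity limiting argument; you instead exploit goodness once, geometrically, to get the positive separation $\delta=d(F,K_M)>0$ between the closed set $F$ and the compact sublevel set $K_M$, and then a single test function $h=M\bigl(1-\delta^{-1}d(\cdot,K_M)\bigr)^{+}$ finishes the job, avoiding the limit in $j$; both are standard, and your version isolates where goodness enters more transparently. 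Two cosmetic points: the case $F=\emptyset$ (where $d(F,K_M)$ is not defined) should be dismissed trivially at the outset, and the nonemptiness discussion for $K_M$ is only needed when $M=0$, where the desired bound $\limsup_{\varepsilon\to0}\varepsilon\log\mathbb{P}(X^\varepsilon\in F)\le 0$ is trivial anyway, since for $M>0$ the identity $\inf_E I=0$ (Laplace principle with $h\equiv 0$) already yields $K_M\neq\emptyset$.
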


Combining Lemma \ref{l1} and \ref{l2} yields that if $E$ is a Polish space and $I$ is a good rate function, then the
LDP and Laplace principle are equivalent.

Let
$$\mathcal{A}=\left\lbrace \phi: \phi\  \text{is  $U$-valued
 $\mathscr{F}_t$-predictable process and}\
  \int_0^T\|\phi_s(\omega)\|^2_U\d s<\infty \  \mathbb{P}\text{-}a.s.\right\rbrace, $$
  and
$$S_M=\left\lbrace \phi\in L^2([0,T], U):
\int_0^T\|\phi_s\|^2_{U}  ds\leq M
 \right\rbrace.$$
It is well-known that $S_M$ endowed with the weak topology is a Polish space (here and in the sequel of this article, we
 always consider the weak topology on $S_M$ unless stated otherwise). We also define
$$\mathcal{A}_M=\left\{\phi\in\mathcal{A}: \phi_{\cdot}(\omega)\in S_M, ~\mathbb{P}\text{-}a.s.\right\}.$$
Let $E$ be a Polish space, for any $\varepsilon>0$, suppose $\mathcal{G}^\varepsilon: C([0,T]; U_1)\rightarrow
E$ is a measurable map and $X^\varepsilon=
\mathcal{G}^\varepsilon(W_{\cdot})$.

We now
formulate the sufficient condition for the Laplace
 principle (equivalently, the LDP) of $X^\varepsilon$ as
 $\varepsilon\rightarrow0$.\\
\textbf{Condition (A)}: There exists a measurable map $\mathcal{G}^0: C([0,T];
U_1)\rightarrow E$ for which the following two conditions hold:

(i) Let $\{\phi^\varepsilon: \varepsilon>0\}\subset \mathcal{A}_M$ for
some $M<\infty$. If $\phi^\varepsilon$ converge to $\phi$ in distribution
as $S_M$-valued random elements, then
$$\mathcal{G}^\varepsilon\left(W_\cdot+\frac{1}{\sqrt{\varepsilon}} \int_0^\cdot
\phi^\varepsilon_s\ d s \right)\rightarrow \mathcal{G}^0\left(\int_0^\cdot
 \phi_s\ d s \right)  $$
in distribution as $\varepsilon\rightarrow 0$.

(ii) For each $M<\infty$, the set
$$K_M=\left\{\mathcal{G}^0\left(\int_0^\cdot \phi_s\d
s\right):
 \phi\in S_M\right\}$$
is a compact subset of $E$.

In \cite{BD} Budhiraja and Dupuis presented the following powerful result for
the Laplace principle (equivalently, the LDP).
\begin{lemma}\label{l3}\cite[Theorem 4.4]{BD}  If
$X^\varepsilon=\mathcal{G}^\varepsilon(W_{\cdot})$ and \textbf{Condition (A)}
holds, then the family $\{X^\varepsilon\}$ satisfies the Laplace
principle (hence LDP) on $E$ with the good
rate function $I$
\begin{equation}\label{rf}
I(f)=\inf_{\left\{\phi\in L^2([0,T]; U):\  f=\mathcal{G}^0(\int_0^\cdot
\phi_sds)\right\}}\left\lbrace\frac{1}{2}
\int_0^T\|\phi_s\|_U^2ds \right\rbrace,
\end{equation}
where infimum over an empty set is taken as $+\infty$.
\end{lemma}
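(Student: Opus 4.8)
The plan is to derive the Laplace principle from the variational representation for functionals of the cylindrical Wiener process, and then to invoke Lemma~\ref{l2} (Bryc's converse) to pass from the Laplace principle to the LDP. Recall that for every bounded Borel function $g:C([0,T];U_1)\to\RR$ and every $\varepsilon>0$,
\begin{equation*}
-\varepsilon\log\EE\Big[\exp\big(-\tfrac1\varepsilon g(W_\cdot)\big)\Big]=\inf_{\phi\in\mathcal{A}}\EE\Big[\tfrac12\int_0^T\|\phi_s\|_U^2\,ds+g\Big(W_\cdot+\tfrac1{\sqrt\varepsilon}\int_0^\cdot\phi_s\,ds\Big)\Big].
\end{equation*}
Fix a bounded continuous $h:E\to\RR$, apply this with $g=h\circ\mathcal{G}^\varepsilon$, and set $\Lambda^\varepsilon:=-\varepsilon\log\EE[\exp(-\tfrac1\varepsilon h(X^\varepsilon))]$. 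Once we verify below that $I$ is a good rate function, Lemma~\ref{l2} reduces the proof to the two bounds $\limsup_{\varepsilon\to0}\Lambda^\varepsilon\le\inf_{f\in E}\{h(f)+I(f)\}$ (Laplace upper bound) and $\liminf_{\varepsilon\to0}\Lambda^\varepsilon\ge\inf_{f\in E}\{h(f)+I(f)\}$ (Laplace lower bound).

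For the upper bound, fix $\delta>0$, pick $f_0\in E$ with $h(f_0)+I(f_0)\le\inf_{f\in E}\{h(f)+I(f)\}+\delta$, and by the definition~(\ref{rf}) of $I$ choose a \emph{deterministic} $\psi\in L^2([0,T];U)$ with $f_0=\mathcal{G}^0(\int_0^\cdot\psi_s\,ds)$ and $\tfrac12\int_0^T\|\psi_s\|_U^2\,ds\le I(f_0)+\delta$. Using the constant sequence $\phi^\varepsilon\equiv\psi\in\mathcal{A}_M$ with $M=\int_0^T\|\psi_s\|_U^2\,ds$ as a test control, the representation gives $\Lambda^\varepsilon\le\tfrac12\int_0^T\|\psi_s\|_U^2\,ds+\EE\big[h\big(\mathcal{G}^\varepsilon(W_\cdot+\tfrac1{\sqrt\varepsilon}\int_0^\cdot\psi_s\,ds)\big)\big]$. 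By Condition~(A)(i) applied to this constant sequence, $\mathcal{G}^\varepsilon(W_\cdot+\tfrac1{\sqrt\varepsilon}\int_0^\cdot\psi_s\,ds)\to\mathcal{G}^0(\int_0^\cdot\psi_s\,ds)=f_0$ in distribution, so boundedness and continuity of $h$ yield $\limsup_{\varepsilon\to0}\Lambda^\varepsilon\le\tfrac12\int_0^T\|\psi_s\|_U^2\,ds+h(f_0)\le\inf_{f\in E}\{h(f)+I(f)\}+2\delta$; letting $\delta\downarrow0$ finishes this bound.

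For the lower bound, for each $\varepsilon$ choose a near-optimal $\phi^\varepsilon\in\mathcal{A}$ so that $\Lambda^\varepsilon\ge\EE\big[\tfrac12\int_0^T\|\phi^\varepsilon_s\|_U^2\,ds+h\big(\mathcal{G}^\varepsilon(W_\cdot+\tfrac1{\sqrt\varepsilon}\int_0^\cdot\phi^\varepsilon_s\,ds)\big)\big]-\varepsilon$. Taking the zero control in the representation shows $|\Lambda^\varepsilon|\le\|h\|_\infty$, so $\EE[\tfrac12\int_0^T\|\phi^\varepsilon_s\|_U^2\,ds]\le 2\|h\|_\infty+1$; a truncation argument---replacing $\phi^\varepsilon$ by its restriction to the event $\{\int_0^T\|\phi^\varepsilon_s\|_U^2\,ds\le M\}$ and using the Markov inequality together with $\|h\|_\infty<\infty$ to bound the discarded part---shows that, at the price of an error vanishing as $M\to\infty$ uniformly in $\varepsilon$, one may assume $\phi^\varepsilon\in\mathcal{A}_M$. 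Since $S_M$ with the weak topology is compact, $\{\phi^\varepsilon\}$ is tight and, along a subsequence, $\phi^\varepsilon\to\phi$ in distribution; Condition~(A)(i) then gives $\mathcal{G}^\varepsilon(W_\cdot+\tfrac1{\sqrt\varepsilon}\int_0^\cdot\phi^\varepsilon_s\,ds)\to\mathcal{G}^0(\int_0^\cdot\phi_s\,ds)$ in distribution. Invoking the weak lower semicontinuity of $\phi\mapsto\int_0^T\|\phi_s\|_U^2\,ds$, Fatou's lemma, and the bound $\tfrac12\int_0^T\|\phi_s\|_U^2\,ds\ge I(\mathcal{G}^0(\int_0^\cdot\phi_s\,ds))$ that is built into~(\ref{rf}), we obtain $\liminf_{\varepsilon\to0}\Lambda^\varepsilon\ge\EE\big[I(\mathcal{G}^0(\int_0^\cdot\phi_s\,ds))+h(\mathcal{G}^0(\int_0^\cdot\phi_s\,ds))\big]\ge\inf_{f\in E}\{h(f)+I(f)\}$. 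It remains to check that $I$ is a good rate function: weak compactness of $S_{2K}$ together with Condition~(A) shows the infimum in~(\ref{rf}) is attained whenever finite, hence $\{f\in E:I(f)\le K\}=K_{2K}=\{\mathcal{G}^0(\int_0^\cdot\phi_s\,ds):\phi\in S_{2K}\}$, which is compact by Condition~(A)(ii) and in particular closed, so $I$ is lower semicontinuous with compact sublevel sets.

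I expect the main obstacle to be the truncation/localization step in the lower bound: one has to estimate carefully the cost of forcing the near-optimal controls $\phi^\varepsilon$ into $\mathcal{A}_M$ and verify that this cost tends to $0$ as $\varepsilon\to0$ and then $M\to\infty$, combining the Markov inequality for the energy with the boundedness of $h$. A secondary technical point is that the ``convergence in distribution'' assertions and the interchange of limit and expectation need to be made rigorous via the Skorokhod representation theorem, possibly after enlarging the underlying probability space, so that the relevant functional limits hold almost surely.
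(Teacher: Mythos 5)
The paper itself contains no proof of this lemma: it is quoted directly from Budhiraja and Dupuis \cite{BD} (their Theorem 4.4), so there is no in-paper argument to compare against. Your proposal reconstructs the original Budhiraja--Dupuis proof: the variational representation for $-\varepsilon\log\EE[\exp(-\tfrac1\varepsilon h(X^\varepsilon))]$, a deterministic near-optimal control for the Laplace upper bound, and, for the lower bound, a uniform energy bound on near-optimal controls, localization into $\mathcal{A}_M$, tightness in $S_M$, Condition (A)(i), the Skorokhod representation, Fatou's lemma and weak lower semicontinuity of the energy. This is exactly the route of \cite{BD}, and the overall structure is sound; in particular your use of constant (deterministic) control families in the upper bound and of Bryc's converse (Lemma \ref{l2}) to pass to the LDP is correct.

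Two technical points need repair. First, in the localization step, replacing $\phi^\varepsilon$ by its ``restriction to the event $\{\int_0^T\|\phi^\varepsilon_s\|_U^2ds\le M\}$'' does not yield an element of $\mathcal{A}_M$, since that event is $\mathscr{F}_T$-measurable and the truncated process is no longer predictable; the correct device is the stopped control $\phi^\varepsilon_t\mathbf{1}_{[0,\tau_M]}(t)$ with $\tau_M:=\inf\{t:\int_0^t\|\phi^\varepsilon_s\|_U^2ds\ge M\}$, after which your Markov-inequality estimate of the discarded cost (at most $2\|h\|_\infty\,\PP(\tau_M<T)\le C\|h\|_\infty/M$, uniformly in $\varepsilon$) goes through. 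Second, your identification $\{f:I(f)\le K\}=K_{2K}$ relies on attainment of the infimum in (\ref{rf}), and the attainment argument tacitly uses continuity of $\phi\mapsto\mathcal{G}^0(\int_0^\cdot\phi_s\,ds)$ along weakly convergent sequences in $S_M$, which is not literally part of Condition (A) as stated here (part (i) concerns $\mathcal{G}^\varepsilon$ as $\varepsilon\to0$); it can be extracted from (i) by a diagonal argument, but it is cleaner to bypass attainment and observe that $\{f:I(f)\le K\}=\bigcap_{\delta>0}K_{2(K+\delta)}$, an intersection of compact sets by Condition (A)(ii), which yields goodness of $I$ directly. With these two adjustments your argument is a faithful and complete rendering of the cited theorem, modulo the variational representation formula itself, which is the deep external input of \cite{BD}.
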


%\begin{Rem}
%It should be mentioned that \textbf{Condition (A)}~(i) characterize the weak convergence of a certain family of random elements, which is the essential point of so-called weak %convergence method to investigate the LDP. The \textbf{Condition (A)}~(ii) implies that the level sets of the rate function are compact.
%\end{Rem}

\subsection{Main results}
It is well-known that $\big(C([0,T]; H_1)\cap L^{\gamma_1}([0,T];
V_1),d(\cdot,\cdot)\big)$ is a Polish space with respect to the metric
\begin{equation}\label{rho}
d(f,g):=\sup_{t\in[0,T]}\|f_t-g_t\|_{H_1}
+\left(\int_0^T\|f_t-g_t\|_{V_1}^{\gamma_1}\ d t \right)^{\frac{1}{\gamma_1}}.
\end{equation}
According to the Yamada-Watanabe
theorem, there exists a Borel-measurable
function
\begin{equation}
\mathcal{G}^\varepsilon: C([0,T]; U_1)\rightarrow
 C([0,T]; H_1)\cap L^{\gamma_1}([0,T]; V_1)
 \end{equation}
such that
$X^{\epsilon,\alpha}=\mathcal{G}^\varepsilon (W_{\cdot})$, $\mathbb{P}\text{-}a.s.$, where $X^{\epsilon,\alpha}$ is the unique (strong) solution to the slow equation of (\ref{e1}).

Now we consider the following skeleton equation
\begin{equation}\label{e2}
\frac{d\bar{X}^{\phi}_t}{dt}=\big[A(\bar{X}^{\phi}_t)+\bar{F}_1(\bar{X}^{\phi}_t)\big]+G_1(\bar{X}^{\phi}_t)\phi_t,~~\bar{X}^{\phi}_0=x,
\end{equation}
where $\phi\in L^2([0,T];U)$ and $\bar{F}_1(x):=\int_{H_2}F_1(x,y)\mu^x(dy),~x\in H_1$ for $\mu^x$ being the unique invariant measure of the Markov semigroup to the frozen equation (see Eq.~(\ref{e3})).

The existence and uniqueness of solutions to Eq.~(\ref{e2}) for any $\phi\in L^2([0,T];U)$ will be proved in the next section (see Lemma \ref{l5}).  Furthermore, we define
the map $\mathcal{G}^0: C([0,T]; U_1)\rightarrow C([0,T]; H_1)\cap
L^{\gamma_1}([0,T]; V_1)$ by
$$\mathcal{G}^0\Big(\int_0^{\cdot}\phi_sds\Big):=\bar{X}^{\phi}.$$

Now we can state the first main result of this work.
\begin{theorem}\label{t1}
Assume that Hypothesis \ref{h1} and \ref{h2} hold. If
\begin{equation}\label{h5}
\lim_{\epsilon\to0}\alpha(\epsilon)=0~\text{and}~\lim_{\epsilon\to0}\frac{\alpha}{\epsilon}=0,
\end{equation}
 then as $\epsilon\to 0$, $\{X^{\epsilon,\alpha}:\epsilon>0\}$
satisfies the LDP on $C([0,T]; H_1)\cap L^{\gamma_1}([0,T]; V_1)$ with the
good rate function $I$ given by $(\ref{rf})$.
\end{theorem}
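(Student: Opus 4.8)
The plan is to verify \textbf{Condition (A)} from Lemma \ref{l3}, which by the Budhiraja--Dupuis theorem immediately yields the Laplace principle (equivalently, the LDP) with the good rate function $I$ in $(\ref{rf})$. Part (ii) of Condition (A) — compactness of the level sets $K_M=\{\mathcal G^0(\int_0^\cdot\phi_s\,ds):\phi\in S_M\}$ — will be obtained as follows: given a sequence $\phi^n\in S_M$, use that $S_M$ with the weak topology is compact (Banach--Alaoglu) to extract $\phi^n\rightharpoonup\phi$ in $L^2([0,T];U)$; then, via the uniform (in $n$) a priori bounds on $\bar X^{\phi^n}$ in $L^\infty([0,T];H_1)\cap L^{\gamma_1}([0,T];V_1)$ and the growth bound $(\mathbf A\mathbf 3)$ giving a bound on $\tfrac{d}{dt}\bar X^{\phi^n}$ in $L^{\gamma_1/(\gamma_1-1)}([0,T];V_1^*)$ (plus an $H_1$-estimate of the stochastic-control correction term $G_1(\bar X^{\phi^n})\phi^n$), apply an Aubin--Lions / Arzelà--Ascoli type argument together with the monotonicity trick $(\mathbf A\mathbf 2)$ to pass to the limit and identify the limit as $\bar X^\phi$; continuity in $C([0,T];H_1)$ comes from the energy equality. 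Since no compactness of the embedding $V_1\subset H_1$ is assumed, the convergence $\bar X^{\phi^n}\to\bar X^\phi$ in $C([0,T];H_1)$ must be extracted directly from the monotonicity estimate rather than from compact embedding — this is where the technique of \cite{CM} enters, handling the extra term via time discretization.

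The substance of the proof is part (i): the convergence in distribution
$$
\mathcal G^\varepsilon\!\Big(W_\cdot+\tfrac{1}{\sqrt\varepsilon}\!\int_0^\cdot\phi^\varepsilon_s\,ds\Big)\ \longrightarrow\ \mathcal G^0\!\Big(\int_0^\cdot\phi_s\,ds\Big)
$$
whenever $\phi^\varepsilon\to\phi$ in distribution as $S_M$-valued elements. By Girsanov's theorem, $\mathcal G^\varepsilon(W_\cdot+\tfrac1{\sqrt\varepsilon}\int_0^\cdot\phi^\varepsilon_s\,ds)$ is the slow component $X^{\varepsilon,\alpha,\phi^\varepsilon}$ of the \emph{stochastic control system} obtained from $(\ref{e1})$ by shifting $W$, i.e.
$$
dX^{\varepsilon,\alpha}_t=\big[A(X^{\varepsilon,\alpha}_t)+F_1(X^{\varepsilon,\alpha}_t,Y^{\varepsilon,\alpha}_t)+G_1(X^{\varepsilon,\alpha}_t)\phi^\varepsilon_t\big]dt+\sqrt\varepsilon\,G_1(X^{\varepsilon,\alpha}_t)\,dW_t,
$$
with the fast equation correspondingly shifted. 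The strategy, following the Khasminskii time-discretization scheme, is: (a) derive uniform-in-$\varepsilon$ moment estimates for $X^{\varepsilon,\alpha}$ in $L^\infty([0,T];H_1)\cap L^{\gamma_1}([0,T];V_1)$ and for $Y^{\varepsilon,\alpha}$ in the corresponding fast spaces, using the coercivity in Remark \ref{r1}(i) and $(\mathbf H\mathbf 3)$ — here stopping-time localizations are needed because the variational setting gives no pathwise uniform control a priori; (b) introduce an auxiliary process $\hat Y^{\varepsilon,\alpha}$ with the slow input frozen on intervals of length $\delta=\delta(\varepsilon)$, and use the exponential ergodicity of the frozen equation (Hypothesis \ref{h2}, strict monotonicity $(\mathbf H\mathbf 2)$) to show $\EE\int_0^T\|F_1(X^{\varepsilon,\alpha}_s,Y^{\varepsilon,\alpha}_s)-\bar F_1(X^{\varepsilon,\alpha}_s)\|_{H_1}\,ds\to 0$ via an averaging estimate, under the scaling $(\ref{h5})$ $\alpha\to0$, $\alpha/\varepsilon\to0$; (c) establish tightness of the laws of $X^{\varepsilon,\alpha}$ in $C([0,T];H_1)\cap L^{\gamma_1}([0,T];V_1)$ (again without compact embedding, so tightness in $C([0,T];H_1)$ must come from an $H_1$-modulus estimate plus weak compactness in $V_1$, in the spirit of \cite{CM}); (d) by Skorokhod representation pass to a subsequence converging a.s., and use the monotonicity trick together with $\phi^\varepsilon\to\phi$ weakly and the averaging from (b) to identify every limit point as the unique solution $\bar X^\phi$ of the skeleton equation $(\ref{e2})$; uniqueness of the limit then upgrades subsequential convergence to full convergence in distribution.

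The main obstacle I expect is step (b) combined with step (d): controlling the averaging error $\EE\int_0^T\|F_1(X^{\varepsilon,\alpha}_s,Y^{\varepsilon,\alpha}_s)-\bar F_1(X^{\varepsilon,\alpha}_s)\|_{H_1}\,ds$ in the purely variational framework, where — unlike the mild-solution approach of \cite{HSS,SWXY,WRD} — one cannot use smoothing of the semigroup to get time-Hölder regularity of the slow process, so the comparison between $Y^{\varepsilon,\alpha}$ and its frozen-coefficient version $\hat Y^{\varepsilon,\alpha}$ over each discretization block must be quantified using only the Lipschitz bound $(\ref{h3})$ on $F_2$ in the slow variable, the exponential contraction from $(\mathbf H\mathbf 2)$, and the $H_1$-increment estimates of $X^{\varepsilon,\alpha}$ in mean (which are only of order $\delta^{1/2}$ plus a term of order $\sqrt\varepsilon$, forcing a careful choice of $\delta=\delta(\varepsilon)\to0$ slowly relative to $\alpha/\varepsilon\to0$). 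A secondary difficulty, recurring throughout, is the absence of compactness in the Gelfand triple: every tightness/compactness argument that in \cite{L1,RZ2,SWXY} would use $V_1\hookrightarrow\hookrightarrow H_1$ must instead be run through the monotonicity-trick identification and a direct $C([0,T];H_1)$ convergence argument adapted from \cite{CM}, and the extra control term $G_1(\bar X)\phi$ (resp.\ $G_1(X^{\varepsilon,\alpha})\phi^\varepsilon$) in the skeleton/control equations must be absorbed into this scheme via the time-discretization device. Once these estimates are in place, the identification of the limit and the verification of $(\ref{rf})$ are routine.
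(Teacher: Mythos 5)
Your overall scaffolding (Budhiraja--Dupuis Condition (A), Girsanov identification of the controlled system, a priori estimates with stopping times, an auxiliary process $\widehat{Y}^{\epsilon,\alpha}$ with the slow input frozen on blocks of length $\delta$, ergodicity of the frozen equation, scaling $\delta=\delta(\epsilon)\to0$) matches the paper, but two of your central steps are not workable as stated. First, in step (b) you propose to show $\mathbb{E}\int_0^T\|F_1(X^{\epsilon,\alpha}_s,Y^{\epsilon,\alpha}_s)-\bar F_1(X^{\epsilon,\alpha}_s)\|_{H_1}\,ds\to0$. This is too strong and is in general false: on the fast time scale $Y^{\epsilon,\alpha}$ fluctuates around the invariant measure $\mu^{x}$, so the integrand stays of order one; averaging only makes the \emph{oscillatory time integral} small, not the $L^1(dt\times\mathbb{P})$-norm of the deviation. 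What can be proved, and what the paper proves in Step 3 of the proof of Theorem \ref{t4}, is an estimate of
$\mathbb{E}\big[\sup_{t\in[0,T\wedge\widetilde\tau_N^\epsilon]}\big|\int_0^t\langle F_1(X^{\epsilon,\alpha,\phi^\epsilon}_{s(\delta)},\widehat{Y}^{\epsilon,\alpha}_s)-\bar F_1(X^{\epsilon,\alpha,\phi^\epsilon}_{s(\delta)}),\widetilde Z^{\epsilon}_{s(\delta)}\rangle_{H_1}ds\big|\big]$
with $\widetilde Z^{\epsilon}_t:=X^{\epsilon,\alpha,\phi^\epsilon}_t-\bar X^{\phi}_t$, obtained blockwise through the correlation function $\Psi_k(s,r)$ of the frozen process and the exponential mixing (\ref{er}), giving an error of order $\frac{\alpha}{\delta}+(\frac{\alpha}{\delta})^{\frac12}+\delta^{\frac12}$; the norm must stay outside the time integral (or the deviation must be paired against the piecewise-constant process $\widetilde Z^{\epsilon}_{s(\delta)}$), otherwise the argument collapses.

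Second, your steps (c)--(d) rest on tightness of the laws of $X^{\epsilon,\alpha,\phi^\epsilon}$ in $C([0,T];H_1)\cap L^{\gamma_1}([0,T];V_1)$ followed by Skorokhod representation and a monotonicity-trick identification. Without a compact embedding $V_1\subset H_1$ --- which Theorem \ref{t1} deliberately does not assume --- an $H_1$-modulus estimate plus weak compactness in $V_1$ does \emph{not} give tightness in $C([0,T];H_1)$: bounded subsets of $H_1$ are not relatively compact, so neither Arzel\`a--Ascoli nor Aubin--Lions applies, and Condition (A)(i) requires convergence in distribution in the strong metric (\ref{rho}). Your appeal to \cite{CM} points the wrong way: that technique (and the paper) is designed precisely to \emph{avoid} tightness of the state process. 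The paper applies the Skorokhod representation only to the controls $(\phi^\epsilon,\phi)$, which live in the compact Polish space $S_M$, and then proves directly, via It\^o's formula, Gronwall's lemma and the stopping times $\widetilde\tau_N^\epsilon$, that $\sup_{t\le T}\|\widetilde Z^{\epsilon}_t\|_{H_1}^2+\theta_1\int_0^T\|\widetilde Z^{\epsilon}_t\|_{V_1}^{\gamma_1}dt\to0$ in probability; the delicate term $\int_0^t\langle G_1(\bar X^{\phi}_s)(\phi^\epsilon_s-\phi_s),\widetilde Z^{\epsilon}_s\rangle_{H_1}ds$ is handled by time discretization and the compactness of the Hilbert--Schmidt operators $G_1(\bar X^{\phi}_{k\delta})$, and the same device (not Aubin--Lions) yields the compactness of $K_M$ in Theorem \ref{t5}. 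Unless you replace the tightness-plus-identification scheme by such a direct difference estimate, and correct the averaging quantity as above, the proposed proof does not go through.
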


We need to point out that the theorem above is not applicable to the stochastic fast-diffusion equation and singular stochastic p-Laplace equation (i.e. $1<p<2$) directly since the condition $({\mathbf{A}}{\mathbf{2}})$ does not hold. However, if we replace $({\mathbf{A}}{\mathbf{2}})$ by the following local monotonicity and coercivity condition as in \cite{LR2}:
\begin{enumerate}
\item [({\textbf{A}}{\textbf{4}})]
\begin{eqnarray*}
2_{V_1^*}\langle A(u)-A(v), u-v\rangle_{V_1}+
 \|G_1(u)-G_1(v)\|_{L_2(U,H_1)}^2\leq (K+\rho(v))\|u-v\|_{H_1}^2,
\end{eqnarray*}
where $\rho$ is the same as the one defined in $({\mathbf{A}}{\mathbf{2}})$. Moreover,
\begin{eqnarray*}
2{ }_{V_1^*}\langle A(u),
 u\rangle_{V_1}+\|G_1(u)\|^{2}_{L_2(U,H_1)}\leq-\theta_1\|u\|_{V_1}^{\gamma_1}
    +K(1+ \|u\|_{H_1}^2).
\end{eqnarray*}
\end{enumerate}
Then the LDP for multi-scale stochastic fast-diffusion equation and singular stochastic p-Laplace equation can be proved on $C([0,T];H_1)$ as the following theorem stated.

\begin{theorem}\label{t3}
Assume that $({\mathbf{A}}{\mathbf{1}})$, $({\mathbf{A}}{\mathbf{3}})$, $({\mathbf{A}}{\mathbf{4}})$ and Hypothesis \ref{h2} hold. If the condition\ (\ref{h5}) holds, then as $\epsilon\to 0$, $\{X^{\epsilon,\alpha}:\epsilon>0\}$
satisfies the LDP on $C([0,T]; H_1)$ with the
good rate function $I$ given by $(\ref{rf})$.
\end{theorem}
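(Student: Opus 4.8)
The plan is to derive Theorem~\ref{t3} from the Budhiraja--Dupuis criterion (Lemma~\ref{l3}) by verifying \textbf{Condition (A)} for the maps $\mathcal G^\epsilon$ and $\mathcal G^0$, now regarded as taking values in the Polish space $C([0,T];H_1)$. The architecture is the same as in the proof of Theorem~\ref{t1}; the only structural change is that the pathwise $V_1$-dissipativity supplied by $({\mathbf A}{\mathbf 2})$ is no longer available, so all comparison and continuity estimates have to be closed in the $H_1$-norm alone, which is exactly why the conclusion is stated only on $C([0,T];H_1)$. The assumption $({\mathbf A}{\mathbf 4})$ still gives (a) the coercivity $2{}_{V_1^*}\langle A(u),u\rangle_{V_1}+\|G_1(u)\|^2_{L_2(U,H_1)}\le-\theta_1\|u\|_{V_1}^{\gamma_1}+K(1+\|u\|_{H_1}^2)$, yielding all the $L^{\gamma_1}([0,T];V_1)$ and $L^\infty([0,T];H_1)$ energy bounds, and (b) the local monotonicity in $H_1$ with the locally bounded coefficient $\rho$, which, since $\rho(\bar X^\phi_\cdot)\in L^1([0,T])$ for $\bar X^\phi\in L^{\gamma_1}([0,T];V_1)\cap L^\infty([0,T];H_1)$, still permits Gronwall-type arguments (after localization by stopping times $\tau_R:=\inf\{t:\|\cdot\|_{H_1}>R\}$ where needed). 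As preliminaries I would establish, following \cite{LR2}, the well-posedness of the skeleton equation~\eqref{e2} and of the stochastic control equation~\eqref{e5} under $({\mathbf A}{\mathbf 1})$, $({\mathbf A}{\mathbf 3})$, $({\mathbf A}{\mathbf 4})$ and Hypothesis~\ref{h2}, the Lipschitz continuity of $\bar F_1$ (a consequence of the contraction estimates of Section~4), and, for $\{\phi^\epsilon\}\subset\mathcal A_M$ with $(X^\epsilon,Y^\epsilon)$ the solution of~\eqref{e5}, the uniform a priori bounds
$$\sup_{\epsilon>0}\mathbb{E}\Big[\sup_{t\in[0,T]}\|X^\epsilon_t\|^2_{H_1}+\int_0^T\|X^\epsilon_t\|^{\gamma_1}_{V_1}\,dt\Big]<\infty,\qquad \sup_{\epsilon>0}\mathbb{E}\int_0^T\|Y^\epsilon_t\|^2_{H_2}\,dt<\infty,$$
obtained by It\^o's formula together with the coercivity of $({\mathbf A}{\mathbf 4})$ and $({\mathbf H}{\mathbf 3})$, the Lipschitz bound on $F_1$, the Burkholder--Davis--Gundy and Young inequalities and $\int_0^T\|\phi^\epsilon_s\|_U^2\,ds\le M$; here the extra drift coming from the control shift in the fast equation is absorbed by the dissipativity at scale $1/\alpha$ together with condition~\eqref{h5}.

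The heart of the proof is part~(i) of \textbf{Condition (A)}: if $\phi^\epsilon\to\phi$ in distribution in $S_M$ (weak topology), then $X^\epsilon\to\bar X^\phi:=\mathcal G^0(\int_0^\cdot\phi_s\,ds)$ in distribution in $C([0,T];H_1)$. I would prove the quantitative averaging estimate
$$\lim_{\epsilon\to0}\mathbb{E}\Big[\sup_{t\in[0,T]}\|X^\epsilon_t-\bar X^{\phi^\epsilon}_t\|^2_{H_1}\Big]=0,$$
where $\bar X^{\phi^\epsilon}$ solves~\eqref{e2} with the same (random) control $\phi^\epsilon$. This is carried out by Khasminskii's time discretization: partition $[0,T]$ into intervals of length $\delta=\delta(\epsilon)$, introduce the auxiliary fast process $\hat Y^\epsilon$ obtained by freezing the slow component (and the control coefficient) at the left endpoints, estimate $\mathbb{E}\int_0^T\|Y^\epsilon_s-\hat Y^\epsilon_s\|^2_{H_2}\,ds$ via the strict monotonicity $({\mathbf H}{\mathbf 2})$ and $({\mathbf H}{\mathbf 3})$, and then invoke the exponential ergodicity of the frozen equation~\eqref{e3} (with rate uniform in the frozen slow state, established in Section~4) to replace $\tfrac1\delta\int_{t_k}^{t_{k+1}}F_1(X^\epsilon_{t_k},\hat Y^\epsilon_s)\,ds$ by $\bar F_1(X^\epsilon_{t_k})$ up to an error that vanishes when first $\epsilon\to0$ and then $\delta\to0$; condition~\eqref{h5} is precisely what makes possible a simultaneous choice $\alpha\ll\delta\ll\epsilon$, so that the fast motion reaches quasi-equilibrium on each mesh interval while the control-noise perturbation of the fast equation stays negligible. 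The comparison between $X^\epsilon$ and $\bar X^{\phi^\epsilon}$ is then closed with It\^o's formula for $\|X^\epsilon_t-\bar X^{\phi^\epsilon}_t\|^2_{H_1}$, the local monotonicity in $({\mathbf A}{\mathbf 4})$, the Lipschitz continuity of $F_1$, $\bar F_1$ and $G_1$ in $H_1$, and Gronwall's inequality with the $L^1$-in-time coefficient $K+\rho(\bar X^{\phi^\epsilon}_\cdot)$ (localizing at $\tau_R$ and letting $R\to\infty$ using the uniform bounds). Combined with part~(ii) below, which gives $\bar X^{\phi^\epsilon}\to\bar X^\phi$ in distribution, this yields $X^\epsilon\to\bar X^\phi$ in distribution.

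For part~(ii) I would show that $\phi\mapsto\bar X^\phi$ is continuous from $S_M$ (weak topology) into $C([0,T];H_1)$. Given $\phi^n\rightharpoonup\phi$ in $S_M$, the uniform energy bounds for $\bar X^{\phi^n}$ and an It\^o/chain-rule estimate for $\|\bar X^{\phi^n}_t-\bar X^\phi_t\|^2_{H_1}$ based on $({\mathbf A}{\mathbf 4})$ give the convergence; the only delicate term is $\int_0^t\langle G_1(\bar X^{\phi^n}_s)\phi^n_s-G_1(\bar X^\phi_s)\phi_s,\bar X^{\phi^n}_s-\bar X^\phi_s\rangle_{H_1}\,ds$, in which the merely weak convergence of $\phi^n$ is handled by a further time-discretization of $G_1(\bar X^\phi_\cdot)$ in the spirit of \cite{CM}, so that no compactness of the embedding $V_1\subset H_1$ is needed. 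Since $S_M$ with the weak topology is compact, this continuity gives the compactness of $K_M=\{\bar X^\phi:\phi\in S_M\}$ in $C([0,T];H_1)$. With parts (i) and (ii) in hand, Lemma~\ref{l3} yields the LDP on $C([0,T];H_1)$ with the good rate function $I$ of~\eqref{rf}, which proves Theorem~\ref{t3}.

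I expect the main obstacle to be part~(i): within the variational (locally monotone) framework and without any compact embedding, one must reconcile the Khasminskii averaging of the fast variable, the discretization treatment of the weakly convergent control, and the fact that $A$ is only locally monotone in the $H_1$-norm — so that all error terms can be shown to tend to $0$ in the iterated limit $\epsilon\to0$ then $\delta\to0$, with the locally bounded coefficient $\rho$ controlled solely by the uniform $H_1$- and $V_1$-energy bounds (and stopping times), rather than by any compactness argument.
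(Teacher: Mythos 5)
Your proposal is correct and, at the level of ingredients, is exactly what the paper intends: the paper's own ``proof'' of Theorem \ref{t3} is a one-line reduction to the proof of Theorem \ref{t1}, observing that $({\mathbf{A}}{\mathbf{2}})$ is only used to get the extra $L^{\gamma_1}([0,T];V_1)$-convergence, while under $({\mathbf{A}}{\mathbf{4}})$ all difference estimates close in $H_1$ alone — which is precisely your reading, and your use of the Budhiraja--Dupuis criterion, Khasminskii discretization, the auxiliary process, exponential ergodicity of the frozen equation, stopping-time localization and the discretization-plus-Hilbert--Schmidt-compactness treatment of the weakly convergent controls all match Sections 4--5. The one genuine organizational difference is in Condition (A)(i): the paper compares $X^{\epsilon,\alpha,\phi^\epsilon}$ directly with $\bar X^{\phi}$ (limiting control), invoking the Skorokhod representation so that $\phi^\epsilon\to\phi$ a.s.\ weakly and handling the term $\int_0^t\langle G_1(\bar X^{\phi}_s)(\phi^\epsilon_s-\phi_s),\widetilde Z^\epsilon_s\rangle_{H_1}ds$ inside the stochastic Gronwall estimate, whereas you factor the argument as (a) an averaging estimate between $X^{\epsilon,\alpha,\phi^\epsilon}$ and $\bar X^{\phi^\epsilon}$ with the \emph{same} control (where no weak convergence enters, only Lipschitzness of $G_1$ and $\|\phi^\epsilon\|_U^2\in L^1$) plus (b) continuity of the deterministic skeleton map $\phi\mapsto\bar X^{\phi}$ on $S_M$ with the weak topology, which you then reuse verbatim for Condition (A)(ii); this is a legitimate and arguably cleaner arrangement, since the delicate weak-convergence term is confined to the deterministic continuity proof (which is essentially the paper's Theorem \ref{t5}). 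Two small caveats: your claimed estimate $\lim_{\epsilon\to0}\mathbb{E}\bigl[\sup_{t\in[0,T]}\|X^\epsilon_t-\bar X^{\phi^\epsilon}_t\|_{H_1}^2\bigr]=0$ is stronger than what the localization-by-stopping-times argument actually yields — as in the paper's Step 4 you only get convergence in probability after a Chebyshev argument and $N\to\infty$, which is all you need for convergence in distribution — and the parameter hierarchy is $\alpha\ll\delta$ with $\delta\to0$ and $\alpha\ll\epsilon$ (the paper takes $\delta=\alpha^{1/2}$); the relation $\delta\ll\epsilon$ you state is not required.
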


\begin{Rem}
It should be mentioned that compared with $({\mathbf{A}}{\mathbf{4}})$, the condition $({\mathbf{A}}{\mathbf{2}})$ is stronger than $({\mathbf{A}}{\mathbf{4}})$, the key point of using $({\mathbf{A}}{\mathbf{2}})$ is to derive an additional convergence $($i.e.~$\{X^{\epsilon,\alpha}:\epsilon>0\}$ also satisfies the LDP on $L^{\gamma_1}([0,T]; V_1)$$)$.
\end{Rem}

\begin{Rem}
In order to derive the LDP for $\{X^{\epsilon,\alpha}:\epsilon>0\}$, first we will show the existence and uniqueness of solutions with some necessary priori estimates to the skeleton equation and controlled stochastic equations (see Lemma \ref{l5} and \ref{l6} below), then the next step is  to verify two important results on the compactness of the level sets of rate function and the weak convergence of the stochastic control equations. In particular, the Khasminskii's method based on time discretization \cite{K} and some techniques of stopping time will be applied to the proof of the weak convergence and the compactness of the level sets of rate function.

Throughout this paper, we use $C_{p_1,p_2,\cdots}$ to denote some generic positive constant whose value may change from
line to line, and depends only on the designated variables $p_1,p_2,\cdots$.
\end{Rem}
\section{Examples} \label{example}
\setcounter{equation}{0}
 \setcounter{definition}{0}
 The main results formulated in Theorem \ref{t1} and \ref{t3} can be used to deal with a very large family of SPDE models directly, which not only extends or improves some existing works using mild solution approach for a class of two-time-scale semilinear SPDEs such as stochastic reaction-diffusion equations, stochastic Burgers equations (see e.g. \cite{HSS,SWXY,WRD}),  but also obtain the LDP for several new SPDE models with respect to the two-time-scale case.

In this section, we will denote by $\Lambda\subseteq\mathbb{R}^d$ an open bounded domain with a smooth boundary.  Let
$C_0^\infty(\Lambda, \mathbb{R}^d)$ be the space of all infinitely differentiable functions from $\Lambda$ to $\mathbb{R}^d$ with compact support.  For $p\ge 1$, let $L^p(\Lambda, \mathbb{R}^d)$ denote the vector valued $L^p$-space with the norm $\|\cdot\|_{L^p}$.
For each integer $m>0$, we use $W_0^{m,p}(\Lambda, \mathbb{R}^d)$ to denote the classical Sobolev space defined on $\Lambda$
taking values in $\mathbb{R}^d$ with the (equivalent) norm:
$$ \|u\|_{W^{m,p}} = \left( \sum_{0\le |\alpha|\le m} \int_{\Lambda} |D^\alpha u|^pd x \right)^\frac{1}{p}.$$

Below we would like to recall the so-called Gagliardo-Nirenberg interpolation inequality (cf. \cite[Theorem 2.1.5]{Taira}) for the reader's convenience.

If for any $m,n\in\mathbb{N}$ and $q\in[1,\infty]$ satisfying
\[
\frac{1}{q}=\frac{1}{2}+\frac{n}{d}-\frac{m \theta}{d},\ \frac{n}{m}\le\theta\le1,
\]
then there is a constant $C>0$ such that
\begin{equation}
\|u\|_{W^{n,q}}\le C\|u\|_{W^{m,2}}^{\theta}\|u\|_{L^{2}}^{1-\theta},\ \ u\in W^{m,2}(\Lambda, \mathbb{R}^d).\label{GN_inequality}
\end{equation}

%The main results in this article are applicable to the following three types of quasi-linear SPDEs, that is, stochastic porous media equation, stochastic p-Laplace equations and stochastic fast-diffusion equations.
\subsection{Stochastic porous media equation}\label{example 1}
Let us denote by $(E,\mathcal{M},\textbf{m})$ a separable probability space and $(L,\mathcal{D}(L))$ a negative definite linear self-adjoint map defined on
$(L^2(\textbf{m}),\langle\cdot,\cdot\rangle)$, which has discrete spectrum with eigenvalues
$$0>-\lambda_1\geq-\lambda_2\geq\cdots\rightarrow-\infty.$$
Let $H_1$ be the topological dual space of $\mathcal{D}(\sqrt{-L})$, which is endowed with the scalar product
$$\langle u,v\rangle_{H_1}:=\int_E\big(\sqrt{-L}u(\xi)\big)\cdot\big(\sqrt{-L}v(\xi)\big)d\xi,~u,v\in H_1,$$
then identify $L^2(\textbf{m})$ with its dual, one can obtain the following dense and continuous embedding
$$\mathcal{D}(\sqrt{-L})\subseteq L^2(\textbf{m})\subseteq H_1.$$
Consequently, due to this embedding, we can define
$$V_2:=\mathcal{D}(\sqrt{-L}),~H_2:=L^2(\textbf{m}).$$
Suppose that $L^{-1}$ is continuous in $V_1:=L^{r+1}(\textbf{m})$, where $r>1$ is a fixed number. Then we can give a presentation of its dual space $V_1^*$ by the following embedding
$$V_1\subset H_1\cong \mathcal{D}(\sqrt{-L})\subset V_1^*,$$
where $\cong$ is understood through $\sqrt{-L}$.

Consider the two-time-scale stochastic porous media equation as follows,
\begin{equation}\label{PME}
\left\{ \begin{aligned}
&dX^{\epsilon,\alpha}_t=[L\Psi(X^{\epsilon,\alpha}_t)+\Phi(X^{\epsilon,\alpha}_t)]dt+F_1(X^{\epsilon,\alpha}_t,Y^{\epsilon,\alpha}_t)dt+\sqrt{\epsilon}G_1(X^{\epsilon,\alpha}_t)dW_t,\\
&dY^{\epsilon,\alpha}_t=\frac{1}{\alpha}[LY^{\epsilon,\alpha}_t+F_2(X^{\epsilon,\alpha}_t,Y^{\epsilon,\alpha}_t)]dt+\frac{1}{\sqrt{\alpha}}G_2dW_t,\\
&X^{\epsilon,\alpha}_0=x\in H_1,~Y^{\epsilon,\alpha}_0=y\in H_2,
\end{aligned} \right.
\end{equation}
here $W_t$ is a cylindrical Wiener process defined
on a probability space $(\Omega,\mathscr {F},\mathscr
{F}_t,\mathbb{P})$ taking values in a sparable Hilbert space $U$, $\Psi,\Phi:\mathbb{R}\rightarrow\mathbb{R}$
are continuous and measurable maps such that there exist some constants $\theta_1>0$ and $K$,
\begin{eqnarray}
&&|\Psi(s)|+|\Phi(s)|\leq K(1+|s|^r),~~s\in \mathbb{R};
\label{PME3}\\&&-\langle\Psi(u)-\Psi(v),u-v\rangle-\langle\Phi(u)-\Phi(v),L^{-1}(u-v)\rangle
\nonumber\\\leq\!\!\!\!\!\!\!\!&&-\theta_1\|u-v\|_{V_1}^{r+1}+K\|u-v\|_{H_1}^2,~~u,v\in V_1,\label{PME4}
\end{eqnarray}
and the measurable maps
$$F_1:H_1\times H_2\to H_1,~G_1:V_1\to L_2(U,H_1),~F_2:H_1\times V_2\to V_2^*,~G_2\in L_2(U,H_2)$$
are Lipschitz continuous, i.e.,
\begin{eqnarray}
\!\!\!\!\!\!\!\!&&\|F_1(u_1,v_1)-F_1(u_2,v_2)\|_{H_1}\leq C\big(\|u_1-u_2\|_{H_1}+\|v_1-v_2\|_{H_2}\big),\label{55}
\\
\!\!\!\!\!\!\!\!&&\|G_1(u)-G_1(v)\|_{L_2(U,H_1)}\leq C\|u-v\|_{H_1},\label{56}
\\
\!\!\!\!\!\!\!\!&&\|F_2(u_1,v_1)-F_2(u_2,v_2)\|_{H_1}\leq C\|u_1-u_2\|_{H_1}+L_{F_2}\|v_1-v_2\|_{H_2},\label{57}
\end{eqnarray}
here $L_{F_2}$  represents the Lipschitz constant with respect to second variable of $F_2$. Furthermore, we assume that
the smallest eigenvalue $\lambda_1$ of map $L$ satisfies
\begin{equation}\label{60}
\lambda_1-L_{F_2}>0.
\end{equation}

\begin{theorem}(stochastic porous media equation)\label{main result PME2}
Assume that $\Psi,\Phi$ satisfy the above conditions \eref{PME3}-\eref{PME4} and $F_1,F_2,G_1$ satisfy (\ref{55})-(\ref{60}), if
\begin{equation*}
\lim_{\epsilon\to0}\alpha(\epsilon)=0~\text{and}~\lim_{\epsilon\to0}\frac{\alpha}{\epsilon}=0,
\end{equation*}
then  $\{X^{\epsilon,\alpha}:\epsilon>0\}$ in (\ref{PME})
satisfies the LDP on $C([0,T]; H_1)\cap L^{r+1}([0,T]; V_1)$ with the
good rate function $I$ given by $(\ref{rf})$.
\end{theorem}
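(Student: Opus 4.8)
The plan is to recognize Theorem~\ref{main result PME2} as a direct application of the abstract Theorem~\ref{t1}: I would first recast the two-time-scale stochastic porous media system (\ref{PME}) in the form (\ref{e1}) by an appropriate choice of Gelfand triples and coefficients, then verify Hypotheses~\ref{h1} and~\ref{h2}, and conclude by invoking Theorem~\ref{t1} with $\gamma_1=r+1$. The dictionary is as follows: for the slow component, take the Gelfand triple $V_1=L^{r+1}(\mathbf{m})\subset H_1\subset V_1^*$ (with $H_1$ the dual of $\mathcal{D}(\sqrt{-L})$, identified through $\sqrt{-L}$) and $A(u):=L\Psi(u)+\Phi(u)$, so $\gamma_1=r+1$; for the fast component, take $V_2=\mathcal{D}(\sqrt{-L})\subset H_2=L^2(\mathbf{m})\subset V_2^*$ (note $V_2^*=H_1$) and $\widetilde{F}_2(u,v):=Lv+F_2(u,v)$ with $F_2$ the map appearing in (\ref{PME}), so $\gamma_2=2$; the maps $F_1$, $G_1$, $G_2$ are kept as given. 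With this identification (\ref{PME}) is an instance of (\ref{e1}), so its well-posedness will follow once the hypotheses below are verified.

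For Hypothesis~\ref{h1} I would take $\beta_1=0$ and $\rho\equiv0$. Hemicontinuity (A1) follows from the continuity of $\Psi,\Phi$ together with the growth bound (\ref{PME3}) by dominated convergence. The heart of the matter is the local monotonicity (A2), which rests on the standard identity valid in this Gelfand triple,
\begin{equation*}
{}_{V_1^*}\langle A(u)-A(v),u-v\rangle_{V_1}=-\langle\Psi(u)-\Psi(v),u-v\rangle-\langle\Phi(u)-\Phi(v),L^{-1}(u-v)\rangle ,
\end{equation*}
where $\langle\cdot,\cdot\rangle$ is the $L^2(\mathbf{m})$-pairing and $L^{-1}$ is self-adjoint there; condition (\ref{PME4}) then says exactly that this is at most $-\theta_1\|u-v\|_{V_1}^{r+1}+K\|u-v\|_{H_1}^2$, and adding $\|G_1(u)-G_1(v)\|_{L_2(U,H_1)}^2\le C^2\|u-v\|_{H_1}^2$ from (\ref{56}) gives (A2) after adjusting the constants; the remaining Lipschitz bounds are (\ref{55}) and (\ref{56}). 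For the growth condition (A3), the bound (\ref{PME3}) gives $\|\Psi(u)\|_{L^{(r+1)/r}}+\|\Phi(u)\|_{L^{(r+1)/r}}\le C(1+\|u\|_{V_1}^{r})$, and combining this with the mapping properties of $L$ and $L^{-1}$ implied by the continuity of $L^{-1}$ on $V_1$ yields $\|A(u)\|_{V_1^*}^{(r+1)/r}\le C(1+\|u\|_{V_1}^{r+1})$, i.e.\ (A3) with $\beta_1=0$.

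For Hypothesis~\ref{h2} I would take $\beta_2=0$ and $\gamma_2=2$. Hemicontinuity (H1) holds since $v\mapsto Lv$ is bounded and linear from $V_2$ to $V_2^*$ and $F_2$ is Lipschitz. For the strict monotonicity, using that $L$ is negative definite with smallest eigenvalue $\lambda_1$, so that ${}_{V_2^*}\langle Lw,w\rangle_{V_2}=-\|\sqrt{-L}w\|_{H_2}^2\le-\lambda_1\|w\|_{H_2}^2$, together with the Lipschitz bound (\ref{57}) (with constant $L_{F_2}$ in the second variable), one obtains
\begin{equation*}
2\,{}_{V_2^*}\langle\widetilde{F}_2(u_1,v_1)-\widetilde{F}_2(u_1,v_2),v_1-v_2\rangle_{V_2}\le-2(\lambda_1-L_{F_2})\|v_1-v_2\|_{H_2}^2 ,
\end{equation*}
so (\ref{h6}) holds with $\kappa=2(\lambda_1-L_{F_2})>0$ precisely by (\ref{60}); the auxiliary bound (\ref{h3}) is the first term of (\ref{57}). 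The coercivity (H3) follows from ${}_{V_2^*}\langle\widetilde{F}_2(u,v),v\rangle_{V_2}=-\|\sqrt{-L}v\|_{H_2}^2+\langle F_2(u,v),v\rangle$ together with the norm equivalence $\|v\|_{V_2}^2\asymp\|\sqrt{-L}v\|_{H_2}^2$ (valid since $\lambda_1>0$) and the linear bound $\|F_2(u,v)\|_{H_2}\le C(1+\|u\|_{H_1}+\|v\|_{H_2})$ inherited from (\ref{57}), and (H4) follows likewise from $\|\widetilde{F}_2(u,v)\|_{V_2^*}\le\|Lv\|_{V_2^*}+\|F_2(u,v)\|_{H_2}\le C(1+\|v\|_{V_2}+\|u\|_{H_1})$. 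Since both hypotheses hold and the scaling condition assumed in the theorem is exactly (\ref{h5}), Theorem~\ref{t1} yields the LDP for $\{X^{\epsilon,\alpha}\}$ on $C([0,T];H_1)\cap L^{r+1}([0,T];V_1)$ with the rate function (\ref{rf}).

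The only genuinely delicate point is the bookkeeping behind (A2) and (A3): in this non-standard Gelfand triple, where $H_1$ is the dual of $\mathcal{D}(\sqrt{-L})$, one must handle the identifications $H_1\cong\mathcal{D}(\sqrt{-L})$ (through $\sqrt{-L}$) and $H_1\cong H_1^*$ consistently in order to see that the abstract dualization ${}_{V_1^*}\langle L\Psi(u)+\Phi(u),\cdot\,\rangle_{V_1}$ reproduces the $L^2(\mathbf{m})$-expressions on the left-hand side of (\ref{PME4}), and to justify the required mapping properties of $L$ and $L^{-1}$ on the relevant $L^p$-scales; this step is classical in the variational treatment of stochastic porous media equations. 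Once it is in place, everything else reduces to matching constants, absorbing cross terms via Young's inequality, and using the Poincar\'e-type bound $\|v\|_{H_2}^2\le\lambda_1^{-1}\|\sqrt{-L}v\|_{H_2}^2$.
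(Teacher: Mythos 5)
Your proposal is correct and follows essentially the same route as the paper: recast (\ref{PME}) in the abstract form (\ref{e1}), verify Hypotheses \ref{h1} and \ref{h2} (with $\gamma_1=r+1$, $\rho\equiv0$, $\beta_1=0$ for the slow part, and strict monotonicity of $Lv+F_2(u,v)$ with $\kappa=2(\lambda_1-L_{F_2})>0$ from (\ref{60}) for the fast part), and invoke Theorem \ref{t1}. The only difference is that you spell out the verification of $({\mathbf{A}}{\mathbf{1}})$--$({\mathbf{A}}{\mathbf{3}})$ and $({\mathbf{H}}{\mathbf{1}})$--$({\mathbf{H}}{\mathbf{4}})$ in detail, whereas the paper delegates the slow-component check to \cite[Example 4.1.11]{LR1} and asserts the rest follows from (\ref{55})--(\ref{60}).
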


\begin{proof}
It is known that the map $A:=L\Psi+\Phi$ satisfy $({\mathbf{A}}{\mathbf{1}})$-$({\mathbf{A}}{\mathbf{3}})$ for $\rho\equiv0,~\beta_1=0,~\gamma_1=r+1$, we refer to \cite[Example 4.1.11]{LR1} for some details.
Moreover, one can easily prove that the assumptions presented in Theorem \ref{t1} hold via (\ref{55})-(\ref{60}). Therefore, Theorem \ref{main result PME2} is a direct consequence of Theorem \ref{t1}.   \hspace{\fill}$\Box$
\end{proof}

\begin{Rem}
(i) A typical example is that $L=\Delta$, the Laplace operator on a smooth bounded domain in a
complete Riemannian manifold with Dirichlet boundary,  and
$$\Psi(s):=|s|^{r-1}s, ~~\Phi(s):=s,~s\in\mathbb{R}.$$

(ii) To the best of our knowledge, there is no LDP result in the literature established for multi-scale quasilinear SPDEs such as stochastic porous media equation  (see Section \ref{laplace} and \ref{fast} for other types of quasi-linear SPDEs).
\end{Rem}
\subsection{Stochastic $p$-Laplace equation}\label{laplace}
We introduce the two-time-scale stochastic $p$-Laplace equation as follows,
\begin{equation}\label{PLAP}
\left\{ \begin{aligned}
&dX^{\epsilon,\alpha}_t=[div(|\nabla X^{\epsilon,\alpha}_t|^{p-2}\nabla X^{\epsilon,\alpha}_t)-C|X^{\epsilon,\alpha}_t|^{q-2}X^{\epsilon,\alpha}_t]dt+F_1(X^{\epsilon,\alpha}_t,Y^{\epsilon,\alpha}_t)dt+\sqrt{\epsilon}G_1(X^{\epsilon,\alpha}_t)dW_t,\\
&dY^{\epsilon,\alpha}_t=\frac{1}{\alpha}[\Delta Y^{\epsilon,\alpha}_t+F_2(X^{\epsilon,\alpha}_t,Y^{\epsilon,\alpha}_t)]dt+\frac{1}{\sqrt{\alpha}}G_2dW_t,\\
&X^{\epsilon,\alpha}_0=x\in H_1,~Y^{\epsilon,\alpha}_0=y\in H_2,
\end{aligned} \right.
\end{equation}
where $C>0$, $2\leq p\leq\infty,1\leq q\leq p$ and $W_t$ is a cylindrical Wiener process in $U$ defined
on a probability space $(\Omega,\mathscr {F},\mathscr
{F}_t,\mathbb{P})$.

We now consider the following Gelfand triple for the slow component
$$V_1:=W_0^{1,p}(\Lambda)\subset H_1:=L^2(\Lambda)\subset(W_0^{1,p}(\Lambda))^*=V_1^*$$
and
the Gelfand triple for the fast component
$$V_2:=W_0^{1,2}(\Lambda)\subset H_2:=L^2(\Lambda)\subset(W_0^{1,2}(\Lambda))^*=V_2^*.$$

\begin{theorem}(stochastic $p$-Laplace equation)\label{main result PLAP}
Assume that $F_1,F_2,G_1$ satisfy (\ref{55})-(\ref{60}) with $\Delta$ replacing operator $L$, if
\begin{equation*}
\lim_{\epsilon\to0}\alpha(\epsilon)=0~\text{and}~\lim_{\epsilon\to0}\frac{\alpha}{\epsilon}=0,
\end{equation*}
then  $\{X^{\epsilon,\alpha}:\epsilon>0\}$ in (\ref{PLAP})
satisfies the LDP on $C([0,T]; H_1)\cap L^{p}([0,T]; V_1)$ with the
good rate function $I$ given by $(\ref{rf})$.
\end{theorem}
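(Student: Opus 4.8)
The plan is to reduce Theorem~\ref{main result PLAP} to an application of Theorem~\ref{t1}, exactly as was done for the porous media case in Theorem~\ref{main result PME2}. For this it suffices to verify that the coefficients of the $p$-Laplace system fall under Hypothesis~\ref{h1} and Hypothesis~\ref{h2}. The slow operator is $A(u) := \mathrm{div}(|\nabla u|^{p-2}\nabla u) - C|u|^{q-2}u$ on the Gelfand triple $V_1 = W_0^{1,p}(\Lambda) \subset H_1 = L^2(\Lambda) \subset V_1^*$. First I would recall from the standard theory (e.g.\ \cite[Example 4.1.9]{LR1}) that for $2 \le p < \infty$ and $1 \le q \le p$ the map $A$ satisfies hemicontinuity $({\mathbf{A}}{\mathbf{1}})$, the monotonicity--Lipschitz condition $({\mathbf{A}}{\mathbf{2}})$ with $\gamma_1 = p$, $\theta_1 > 0$, $\rho \equiv 0$, $\beta_1 = 0$, $K$ depending on $C$ and the Poincar\'e/Sobolev constants, and the growth bound $({\mathbf{A}}{\mathbf{3}})$ with $\gamma_1/(\gamma_1-1) = p/(p-1)$; the key analytic inputs are the elementary monotonicity inequality $\langle |\xi|^{p-2}\xi - |\eta|^{p-2}\eta,\ \xi-\eta\rangle \ge 0$ for vectors $\xi,\eta \in \RR^d$ together with monotonicity of $s \mapsto |s|^{q-2}s$. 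Since $F_1,G_1$ are assumed Lipschitz via \eref{55}--\eref{56}, the remaining parts of Hypothesis~\ref{h1} hold with no extra work.

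Next I would check Hypothesis~\ref{h2} for the fast operator $F_2(u,v) = \Delta v + F_2(u,v)$ — more precisely, writing the fast drift as $v \mapsto \Delta v$ plus the coupling — on the triple $V_2 = W_0^{1,2}(\Lambda) \subset H_2 = L^2(\Lambda) \subset V_2^*$. Here $({\mathbf{H}}{\mathbf{1}})$ is immediate since $\Delta$ is linear and the coupling is Lipschitz hence continuous in $\lambda$; $({\mathbf{H}}{\mathbf{3}})$ (coercivity) follows from $_{V_2^*}\langle \Delta v, v\rangle_{V_2} = -\|\nabla v\|_{L^2}^2 = -\|v\|_{V_2}^2 + \|v\|_{H_2}^2$ up to equivalence of norms, so $\gamma_2 = 2$, $\theta_2 > 0$, $\beta_2 = 0$, plus the $C\|u\|_{H_1}^2$ term coming from the Lipschitz bound on $F_2(\cdot,0)$; $({\mathbf{H}}{\mathbf{4}})$ is the standard bound $\|\Delta v\|_{V_2^*} \le C\|v\|_{V_2}$. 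The delicate point is the strict monotonicity $({\mathbf{H}}{\mathbf{2}})$, inequality~\eref{h6}: one computes
\[
2\,{}_{V_2^*}\langle \Delta v_1 + F_2(u,v_1) - \Delta v_2 - F_2(u,v_2),\ v_1-v_2\rangle_{V_2}
\le -2\|v_1-v_2\|_{V_2}^2 + 2L_{F_2}\|v_1-v_2\|_{H_2}^2,
\]
and then invokes the Poincar\'e inequality $\|w\|_{V_2}^2 = \|\nabla w\|_{L^2}^2 \ge \lambda_1 \|w\|_{H_2}^2$ (with $\lambda_1$ the smallest Dirichlet eigenvalue of $-\Delta$) to bound the right-hand side by $-(2\lambda_1 - 2L_{F_2})\|v_1-v_2\|_{H_2}^2$; by assumption~\eref{60}, i.e.\ $\lambda_1 - L_{F_2} > 0$, this gives \eref{h6} with $\kappa = 2(\lambda_1 - L_{F_2}) > 0$. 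The second part of $({\mathbf{H}}{\mathbf{2}})$, namely \eref{h3}, is just the Lipschitz bound on $F_2$ in its first argument (the coupling being $H_1$-to-$H_2$ Lipschitz), so $_{V_2^*}\langle F_2(u_1,v) - F_2(u_2,v),\ w\rangle_{V_2} \le C\|u_1-u_2\|_{H_1}\|w\|_{H_2}$ holds directly.

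Having verified both hypotheses, I would then observe that the scaling condition~\eref{h5} is precisely the assumption $\lim_{\epsilon\to0}\alpha(\epsilon) = 0$ and $\lim_{\epsilon\to0}\alpha/\epsilon = 0$ in the statement, so Theorem~\ref{t1} applies verbatim and yields the LDP for $\{X^{\epsilon,\alpha}\}$ on $C([0,T];H_1) \cap L^{p}([0,T];V_1)$ with rate function $I$ given by~\eref{rf}, which is exactly the claim since $\gamma_1 = p$. The main obstacle — and really the only step requiring care rather than citation — is confirming that the Poincar\'e constant appearing in the strict monotonicity estimate is genuinely $\lambda_1$ and that condition~\eref{60} is the sharp hypothesis needed; on a bounded smooth domain $\Lambda$ with Dirichlet boundary this is classical, so the proof reduces to a short verification. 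I would therefore write the proof as: ``The operator $A = \mathrm{div}(|\nabla\cdot|^{p-2}\nabla\cdot) - C|\cdot|^{q-2}\cdot$ satisfies $({\mathbf{A}}{\mathbf{1}})$--$({\mathbf{A}}{\mathbf{3}})$ with $\gamma_1 = p$, $\rho \equiv 0$, $\beta_1 = 0$ (see \cite[Example 4.1.9]{LR1}); the fast operator $\Delta$ with the Lipschitz coupling satisfies Hypothesis~\ref{h2} with $\gamma_2 = 2$, using the Poincar\'e inequality and~\eref{60} to obtain~\eref{h6}. Hence Theorem~\ref{main result PLAP} follows from Theorem~\ref{t1}.''
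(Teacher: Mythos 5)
Your proposal is correct and follows essentially the same route as the paper: verify $({\mathbf{A}}{\mathbf{1}})$--$({\mathbf{A}}{\mathbf{3}})$ for the $p$-Laplace drift with $\gamma_1=p$, $\rho\equiv0$, $\beta_1=0$ (the paper simply cites \cite[Example 5.5]{L1} for this), check the fast-component Hypothesis \ref{h2} from (\ref{55})--(\ref{60}) via the Poincar\'e inequality and the spectral gap condition (\ref{60}), and then invoke Theorem \ref{t1}. The only caveat is that for $({\mathbf{A}}{\mathbf{2}})$ with the term $-\theta_1\|u-v\|_{V_1}^{p}$ the needed input is the strong vector inequality $\langle|\xi|^{p-2}\xi-|\eta|^{p-2}\eta,\xi-\eta\rangle\geq c_p|\xi-\eta|^{p}$ (valid since $p\geq2$), not merely the plain monotonicity $\geq0$ that you quote, which by itself would only give $({\mathbf{A}}{\mathbf{4}})$ and hence, via Theorem \ref{t3}, the LDP on $C([0,T];H_1)$ without the $L^{p}([0,T];V_1)$ component.
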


\begin{proof}
It is well-known that the $p$-Laplace operator satisfies the hemicontinuity, classical monotonicity and growth
condition $({\mathbf{A}}{\mathbf{1}})$-$({\mathbf{A}}{\mathbf{3}})$ for $\rho\equiv0,~\beta_1=0,~\gamma_1=p$, we can see e.g.~\cite[Example 5.5]{L1} for the precise proof.
By (\ref{55})-(\ref{60}), one can easily check that the assumptions presented in Theorem \ref{t1} hold. Thus, the conclusion follows from  Theorem \ref{t1}. \hspace{\fill}$\Box$
\end{proof}

\begin{Rem}
This theorem can not be applied to the singular $p$-Laplace equation (i.e.~$1<p<2$) directly, however, one can use Theorem \ref{t3} to derive the LDP for system (\ref{PLAP}) on $C([0,T];H_1)$ (cf.~\cite[Example 4.1.9]{LR1}).
\end{Rem}

\subsection{Stochastic fast-diffusion equation}\label{fast}
Suppose the same setting as in Section \ref{example 1} for the case of $0<r<1$, the two-time-scale stochastic fast-diffusion equation is given by
\begin{equation}\label{FDE}
\left\{ \begin{aligned}
&dX^{\epsilon,\alpha}_t=[L\Psi(X^{\epsilon,\alpha}_t)+F_1(X^{\epsilon,\alpha}_t,Y^{\epsilon,\alpha}_t)]dt+\sqrt{\epsilon}G_1(X^{\epsilon,\alpha}_t)dW_t,\\
&dY^{\epsilon,\alpha}_t=\frac{1}{\alpha}[LY^{\epsilon,\alpha}_t+F_2(X^{\epsilon,\alpha}_t,Y^{\epsilon,\alpha}_t)]dt+\frac{1}{\sqrt{\alpha}}G_2dW_t,\\
&X^{\epsilon,\alpha}_0=x\in H_1,~Y^{\epsilon,\alpha}_0=y\in H_2,
\end{aligned} \right.
\end{equation}
here $W_t$ stands for a cylindrical Wiener process defined
on a probability space $(\Omega,\mathscr {F},\mathscr
{F}_t,\mathbb{P})$ taking values in a sparable Hilbert space $U$, $\Psi:\mathbb{R}\rightarrow\mathbb{R}$
is continuous and measurable map such that there exist some constants $\delta>0$ and $K$,
\begin{eqnarray}
&&|\Psi(s)|\leq K(1+|s|^r),~~s\in \mathbb{R};
\label{FDE1}\\&&(\Psi(s_1)-\Psi(s_2))(s_1-s_2)\geq \delta|s_1-s_2|^2(|s_1|\vee|s_2|)^{r-1},~s_1,s_2\in\mathbb{R}.\label{FDE2}
\end{eqnarray}

\begin{theorem}(stochastic fast-diffusion equation)\label{main result FDE}
Suppose that $\Psi$ satisfies the conditions (\ref{FDE1})-(\ref{FDE2}) and $F_1,F_2,G_1$ satisfy (\ref{55})-(\ref{60}) above, if
\begin{equation*}
\lim_{\epsilon\to0}\alpha(\epsilon)=0~\text{and}~\lim_{\epsilon\to0}\frac{\alpha}{\epsilon}=0,
\end{equation*}
then  $\{X^{\epsilon,\alpha}:\epsilon>0\}$ in (\ref{FDE})
satisfies the LDP on $C([0,T]; H_1)$ with the
good rate function $I$ given by $(\ref{rf})$.
\end{theorem}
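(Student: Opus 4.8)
The plan is to deduce Theorem \ref{main result FDE} from Theorem \ref{t3} in exactly the same way that Theorem \ref{main result PME2} was deduced from Theorem \ref{t1}, so the proof amounts to checking that the coefficients of \eref{FDE} fit the abstract hypotheses of Theorem \ref{t3}, namely $({\mathbf{A}}{\mathbf{1}})$, $({\mathbf{A}}{\mathbf{3}})$, $({\mathbf{A}}{\mathbf{4}})$ together with Hypothesis \ref{h2}. First I would fix the functional-analytic setting: as in Section \ref{example 1}, take $V_1=L^{r+1}(\textbf{m})$ with $0<r<1$, $H_1$ the dual of $\mathcal{D}(\sqrt{-L})$, $V_1\subset H_1\subset V_1^*$, and put $A(u):=L\Psi(u)$, acting $A:V_1\to V_1^*$. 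The verification of $({\mathbf{A}}{\mathbf{1}})$, $({\mathbf{A}}{\mathbf{3}})$ and $({\mathbf{A}}{\mathbf{4}})$ for this $A$ with $\gamma_1=r+1$, $\beta_1=0$, $\rho\equiv 0$ is classical: I would cite \cite[Example 4.1.9]{LR1} (the fast-diffusion example), where hemicontinuity follows from continuity of $\Psi$ and the growth bound \eref{FDE1}, the growth condition $({\mathbf{A}}{\mathbf{3}})$ follows from \eref{FDE1} with exponent $\gamma_1/(\gamma_1-1)=(r+1)/r$, and the monotonicity-plus-coercivity $({\mathbf{A}}{\mathbf{4}})$ follows from \eref{FDE2}; here $G_1$ contributes the term $\|G_1(u)-G_1(v)\|_{L_2(U,H_1)}^2$ which by \eref{56} is absorbed into the $(K+\rho(v))\|u-v\|_{H_1}^2$ part, and the coercivity bound $2\,{}_{V_1^*}\langle A(u),u\rangle_{V_1}+\|G_1(u)\|^2_{L_2(U,H_1)}\le -\theta_1\|u\|_{V_1}^{\gamma_1}+K(1+\|u\|_{H_1}^2)$ holds since $\Psi(0)$ may be taken $0$ and the monotonicity \eref{FDE2} gives the dissipativity.

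Next I would check Hypothesis \ref{h2} for the fast component, with $V_2=\mathcal{D}(\sqrt{-L})$, $H_2=L^2(\textbf{m})$, and $F_2(u,v):=Lv+F_2(u,v)$ in the notation of \eref{FDE} — more precisely the drift of the fast equation is $v\mapsto Lv$ plus the Lipschitz perturbation. Condition $({\mathbf{H}}{\mathbf{1}})$ (hemicontinuity) is immediate since the map is affine in $v$ and Lipschitz in $u$; $({\mathbf{H}}{\mathbf{2}})$ (strict monotonicity) uses $2\,{}_{V_2^*}\langle L(v_1-v_2),v_1-v_2\rangle_{V_2}\le -2\lambda_1\|v_1-v_2\|_{H_2}^2$ together with the $H_2$-Lipschitz bound $L_{F_2}\|v_1-v_2\|_{H_2}^2$ from \eref{57}, so that \eref{h6} holds with $\kappa=2\lambda_1-2L_{F_2}>0$ thanks precisely to the spectral gap assumption \eref{60}; condition \eref{h3} is the $H_1$-Lipschitz dependence on $u$ from \eref{57}; $({\mathbf{H}}{\mathbf{3}})$ (coercivity) uses the coercivity of $L$ on the triple $V_2\subset H_2\subset V_2^*$, giving $-\theta_2\|v\|_{V_2}^{\gamma_2}$ with $\gamma_2=2$, plus the quadratic bound in $\|u\|_{H_1}$ coming from the affine growth of the perturbation; and $({\mathbf{H}}{\mathbf{4}})$ (growth) likewise holds with $\gamma_2=2$, $\beta_2=0$. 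Finally \eref{55}, \eref{56} reproduce exactly the Lipschitz hypotheses on $F_1$ and $G_1$ demanded in Hypothesis \ref{h1} (and retained under $({\mathbf{A}}{\mathbf{4}})$).

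With all abstract hypotheses in force and the scaling condition $\lim_{\epsilon\to0}\alpha(\epsilon)=0$, $\lim_{\epsilon\to0}\alpha/\epsilon=0$ assumed, Theorem \ref{t3} applies verbatim and yields the LDP for $\{X^{\epsilon,\alpha}:\epsilon>0\}$ on $C([0,T];H_1)$ with the good rate function \eref{rf}. I would close with one sentence noting why the conclusion is stated only on $C([0,T];H_1)$ and not on $C([0,T];H_1)\cap L^{\gamma_1}([0,T];V_1)$: the fast-diffusion nonlinearity \eref{FDE2} only delivers the weaker condition $({\mathbf{A}}{\mathbf{4}})$ rather than $({\mathbf{A}}{\mathbf{2}})$ (there is no $-\theta_1\|u-v\|_{V_1}^{\gamma_1}$ term in the difference estimate), so the additional $L^{\gamma_1}([0,T];V_1)$ convergence used in Theorem \ref{t1} is unavailable here.

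The step I expect to be the only real content — and still quite routine — is the verification of $({\mathbf{H}}{\mathbf{2}})$, since it is the one place where the hypothesis \eref{60} on the spectral gap $\lambda_1-L_{F_2}>0$ enters and one must be careful that the contribution of the Lipschitz perturbation in the $H_2$-inner product is genuinely dominated by the dissipation $-2\lambda_1\|v_1-v_2\|_{H_2}^2$ produced by $L$; everything else is a direct citation of the monotone-operator examples in \cite{LR1,L1} combined with the trivial observation that Lipschitz lower-order terms respect all of $({\mathbf{A}}{\mathbf{1}})$, $({\mathbf{A}}{\mathbf{3}})$, $({\mathbf{A}}{\mathbf{4}})$ and Hypothesis \ref{h2}.
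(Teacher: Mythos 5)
Your proposal is correct and follows essentially the same route as the paper: verify $({\mathbf{A}}{\mathbf{1}})$, $({\mathbf{A}}{\mathbf{3}})$, $({\mathbf{A}}{\mathbf{4}})$ for $A:=L\Psi$ with $\rho\equiv0$, $\beta_1=0$, $\gamma_1=r+1$ by citing the fast-diffusion example in \cite{LR1}, check Hypothesis \ref{h2} for the fast component from (\ref{55})--(\ref{60}) (with the spectral gap (\ref{60}) giving the strict monotonicity), and then invoke Theorem \ref{t3}. Your extra remarks (why only $C([0,T];H_1)$ and the explicit verification of $({\mathbf{H}}{\mathbf{2}})$) simply spell out what the paper leaves implicit; the only nit is the citation, which should be \cite[Example 4.1.11]{LR1} rather than Example 4.1.9.
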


\begin{proof}
Following the similar arguments as in Section \ref{example 1}, the map $A:=L\Psi$ satisfies conditions $({\mathbf{A}}{\mathbf{1}})$, $({\mathbf{A}}{\mathbf{3}})$ and  $({\mathbf{A}}{\mathbf{4}})$ for $\rho\equiv0,~\beta_1=0,~\gamma_1=r+1$, one can see also \cite[Example 4.1.11]{LR1} for the detailed calculations. According to (\ref{55})-(\ref{60}), the assumptions given in Theorem \ref{t1} hold.
The assertion formulated in Theorem \ref{main result FDE} follows from Theorem \ref{t3}. \hspace{\fill}$\Box$
\end{proof}

\begin{Rem}
(i) A specific example fulfilling (\ref{FDE1})-(\ref{FDE2}) is that $\Psi(s):=|s|^{r-1}s$, $s\in\mathbb{R}$ for $0<r<1$, which characterizes the classical fast-diffusion equation.

(ii) In this case, for simplicity, we consider the situation that the embedding $L^{r+1}(\textbf{m})\subset H$ is continuous and dense, one can see \cite[Remark 4.1.15]{LR1} for the sufficient condition to guarantee such assumption holds.
\end{Rem}

Besides the above two-time-scale quasilinear type SPDEs, our main results  are also applicable to a large class of semilinear SPDEs  satisfying local monotonicity condition, for instance, the stochastic Burgers type equations, stochastic Navier-Stokes equation and other hydrodynamical type models. For the two-time-scale stochastic Burgers equation,  it has been studied in the work \cite{LRSX1} using the mild solution method.  In this paper we apply the variational approach to get the LDP and our framework can cover more concrete examples.
\subsection{Stochastic Burgers type equation}
The first semilinear example is the two-time-scale stochastic Burgers type equation,
\begin{equation}\label{semilinear}
\left\{ \begin{aligned}
&dX^{\epsilon,\alpha}_t=[\Delta X^{\epsilon,\alpha}_t+\langle f(X^{\epsilon,\alpha}_t),\nabla X^{\epsilon,\alpha}_t\rangle+h(X^{\epsilon,\alpha}_t)]dt+F_1(X^{\epsilon,\alpha}_t,Y^{\epsilon,\alpha}_t)dt+\sqrt{\epsilon}G_1(X^{\epsilon,\alpha}_t)dW_t,\\
&dY^{\epsilon,\alpha}_t=\frac{1}{\alpha}[\Delta Y^{\epsilon,\alpha}_t+F_2(X^{\epsilon,\alpha}_t,Y^{\epsilon,\alpha}_t)]dt+\frac{1}{\sqrt{\alpha}}G_2dW_t,\\
&X^{\epsilon,\alpha}_0=x\in H_1,~Y^{\epsilon,\alpha}_0=y\in H_2,
\end{aligned} \right.
\end{equation}
where $f=(f_1,\cdots,f_d):\mathbb{R}\rightarrow\mathbb{R}^d$ is a
Lipschitz continuous function and $\langle\cdot,\cdot\rangle$ denotes the scalar
product in $\mathbb{R}^d$, $W_t$ stands for a cylindrical Wiener process defined
on a probability space $(\Omega,\mathscr {F},\mathscr
{F}_t,\mathbb{P})$ taking values in a sparable Hilbert space $U$. Let
$h:\mathbb{R}\rightarrow \mathbb{R}$ denote a continuous function with
$h(0)=0$ such that for some constants $C,r,s\in [0,\infty)$
\begin{equation}\label{5.21}
|h(x)|\leq C(|x|^r+1),~x\in \mathbb{R};
\end{equation}
\begin{equation}\label{5.22}
(h(x)-h(y))(x-y)\leq C(1+|y|^s)(x-y)^2,~x,y\in \mathbb{R}.
\end{equation}

Consider the Gelfand triple for the slow component
$$V_1:=W_0^{1,2}(\Lambda)\subset H_1:=L^2(\Lambda)\subset(W_0^{1,2}(\Lambda))^*=V_1^*$$
and also the Gelfand triple for the fast component
$$V_2:=W_0^{1,2}(\Lambda)\subset H_2:=L^2(\Lambda)\subset(W_0^{1,2}(\Lambda))^*=V_2^*.$$

Now we state the main result on the LDP for two-time-scale  stochastic Burgers type equation.

\begin{theorem}\label{SLSPDE} (stochastic Burgers type equation) Assume that $h$ satisfies (\ref{5.21})-(\ref{5.22}), $F_1,F_2,G_1$ satisfy (\ref{55})-(\ref{60}) above with $\Delta$ replacing operator $L$.
 Assume

Case 1: $d=1, r=2, s=2$,

Case 2: $d=2, r=2, s=2$, and $f$ is bounded,

Case 3: $d=3$, $r=2$, $s=\frac{4}{3}$, and $f$ is bounded measurable function independent of $X^{\epsilon,\alpha}_t$.\\
If
\begin{equation*}
\lim_{\epsilon\to0}\alpha(\epsilon)=0~\text{and}~\lim_{\epsilon\to0}\frac{\alpha}{\epsilon}=0,
\end{equation*}then the family $\{X^{\epsilon,\alpha}:\epsilon>0\}$ in (\ref{semilinear}) satisfies the LDP on $C([0,T]; H_1)\cap L^{2}([0,T]; V_1)$ with the
good rate function $I$ given by $(\ref{rf})$.
\end{theorem}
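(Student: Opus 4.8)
The plan is to deduce Theorem~\ref{SLSPDE} from Theorem~\ref{t1} by recasting Eq.~(\ref{semilinear}) in the abstract form~(\ref{e1}) and checking that its coefficients satisfy Hypotheses~\ref{h1} and~\ref{h2}. I would use the Gelfand triples stated above with $\gamma_1=\gamma_2=2$, set $A(u):=\Delta u+\langle f(u),\nabla u\rangle+h(u)$ for $u\in V_1=W_0^{1,2}(\Lambda)$, and take as the fast operator $\Delta v+F_2(u,v)$, which has exactly the reaction--diffusion structure discussed in Remark~\ref{r1} (leading term $\Delta v$, the remainder Lipschitz). For the fast component, $(\mathbf{H1})$, $(\mathbf{H3})$ and $(\mathbf{H4})$ follow readily from this structure together with~(\ref{57}); the strict monotonicity $(\mathbf{H2})$ is where the spectral gap assumption~(\ref{60}) enters, since $\langle\Delta(v_1-v_2),v_1-v_2\rangle\le-\lambda_1\|v_1-v_2\|_{H_2}^2$ by the Poincar\'e inequality while the Lipschitz bound~(\ref{57}) costs at most $L_{F_2}\|v_1-v_2\|_{H_2}^2$, so $(\mathbf{H2})$ holds with $\kappa:=2(\lambda_1-L_{F_2})>0$. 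The Lipschitz hypotheses~(\ref{55})--(\ref{56}) on $F_1$ and $G_1$ pass directly into the corresponding parts of $(\mathbf{A2})$.

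The heart of the proof is the verification of $(\mathbf{A1})$--$(\mathbf{A3})$ for $A$. Hemicontinuity $(\mathbf{A1})$ is routine from the continuity of $f$ and $h$ and dominated convergence, and the growth bound $(\mathbf{A3})$ follows from H\"older's inequality and the Sobolev embedding $W_0^{1,2}(\Lambda)\hookrightarrow L^p(\Lambda)$, with $\beta_1$ fixed according to $d$ and the exponent $r$. For the local monotonicity $(\mathbf{A2})$ I would write $z:=u-v$ and split $2{}_{V_1^*}\langle A(u)-A(v),z\rangle_{V_1}$ into three pieces: the Laplacian part $-2\|\nabla z\|_{L^2}^2$, which supplies the dissipative term $-\theta_1\|z\|_{V_1}^{2}$ after using the Poincar\'e inequality; the reaction difference $2\int_\Lambda(h(u)-h(v))z\,dx$, estimated by~(\ref{5.22}) and the Gagliardo--Nirenberg inequality~(\ref{GN_inequality}) as a small multiple of $\|\nabla z\|_{L^2}^2$ plus a term of the form $\rho(v)\|z\|_{H_1}^2$ with $\rho(v)=C(1+\|v\|_{V_1}^2)(1+\|v\|_{H_1}^{\beta_1})$; and the convection difference, rewritten as $\int_\Lambda\langle f(u),\nabla z\rangle z\,dx+\int_\Lambda\langle f(u)-f(v),\nabla v\rangle z\,dx$ and controlled using the Lipschitz (and, where needed, boundedness) property of $f$ together with~(\ref{GN_inequality}), again absorbing a small multiple of $\|\nabla z\|_{L^2}^2$ and leaving a $(K+\rho(v))\|z\|_{H_1}^2$ contribution. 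These are precisely the computations already carried out for the single-scale Burgers type equation, so I would cite~\cite{LR1,LRSX1} for the routine details rather than reproduce them.

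The delicate point, which is also what forces the three cases, is that the Young and Gagliardo--Nirenberg estimates in $(\mathbf{A2})$ close only for a dimension-dependent range of exponents, and the listed hypotheses are exactly those that keep the resulting $\rho$ of the admissible order $\|v\|_{V_1}^2$. For $d=1$ the one-dimensional embeddings are generous enough that Lipschitz continuity of $f$ and $s=2$ suffice; for $d=2$ one must take $f$ bounded so that $\int_\Lambda\langle f(u),\nabla z\rangle z\,dx$ does not generate a higher power of $\|v\|_{V_1}$; and for $d=3$ one additionally requires $f$ to be independent of $X^{\epsilon,\alpha}_t$ (otherwise the term $\int_\Lambda\langle f(u)-f(v),\nabla v\rangle z\,dx$ forces $\rho(v)\sim\|v\|_{V_1}^4$) and $s=\tfrac43$, which is exactly the critical exponent for which $\int_\Lambda|v|^s z^2\,dx$ can be absorbed using $W_0^{1,2}(\Lambda)\hookrightarrow L^6(\Lambda)$. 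Carrying out this borderline bookkeeping in $d=3$ is the main obstacle. Once $(\mathbf{A1})$--$(\mathbf{A3})$ and $(\mathbf{H1})$--$(\mathbf{H4})$ are established, and since $\alpha(\epsilon)\to0$ and $\alpha/\epsilon\to0$ hold by assumption, Theorem~\ref{t1} applies with $\gamma_1=2$ and delivers the LDP for $\{X^{\epsilon,\alpha}:\epsilon>0\}$ on $C([0,T];H_1)\cap L^2([0,T];V_1)$ with the good rate function $I$ of~(\ref{rf}).
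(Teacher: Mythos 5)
Your proposal is correct and follows essentially the same route as the paper: recast \eref{semilinear} in the abstract framework, verify $({\mathbf{A}}{\mathbf{1}})$--$({\mathbf{A}}{\mathbf{3}})$ for $A(u)=\Delta u+\langle f(u),\nabla u\rangle+h(u)$ with $\gamma_1=\beta_1=2$ (delegating the local monotonicity computation to the known single-scale Burgers analysis, as the paper does by citing \cite[Example 3.2]{LR2}, and handling the $h$-term in the growth bound via H\"older and Gagliardo--Nirenberg), check Hypothesis \ref{h2} for the fast component from (\ref{57})--(\ref{60}) with the spectral gap giving strict monotonicity, and then invoke Theorem \ref{t1}. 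Your extra remarks on why the three cases and the condition (\ref{60}) are needed are consistent with, and slightly more explicit than, the paper's argument.
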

\begin{proof}
Let us denote the operator
$$A(u):=\widehat{A}(u)+h(u):=\Delta u+\langle f(u),\nabla u\rangle+h(u),~u\in V_1.$$
Following from \cite[Example 3.2]{LR2}, it is easy to obtain that map $A$ satisfies $({\mathbf{A}}{\mathbf{1}})$-$({\mathbf{A}}{\mathbf{2}})$ with $\gamma_1=2$. For the condition $({\mathbf{A}}{\mathbf{3}})$, we have
$$\|\widehat{A}(u)+h(u)\|_{V_1^*}^2\leq C(\|\widehat{A}(u)\|_{V_1^*}^2+\|h(u)\|_{V_1^*}^2).$$
The first term of right hand side of the above inequality fulfills
$$\|\widehat{A}(u)\|_{V_1^*}^2\leq C(1+\|u\|_{V_1^*}^2)(1+\|u\|_{H_1^*}^\nu),$$
where $\nu=2$ in Case 1 and $\nu=0$ in Case 2. For the second term, making use of H\"{o}lder's inequality and Gagliardo-Nirenberg interpolation inequality (\ref{GN_inequality}),  we can get that
\begin{eqnarray*}
|{}_{V_1^*}\langle h(u),v\rangle_{V_1}|^2\leq\!\!\!\!\!\!\!\!&&
\left\{ \begin{aligned}
&\|v\|_{L^\infty}^2(1+\|u\|_{L^2}^4),~d=1,\\
&\|v\|_{L^2}^2(1+\|u\|_{L^4}^4),~d=2,\\
&\|v\|_{L^6}^2(1+\|u\|_{L^{\frac{12}{5}}}^4),~d=3
\end{aligned} \right.
\nonumber\\
\leq\!\!\!\!\!\!\!\!&&\|v\|_{V_1}^2(1+\|u\|_{V_1}^2\|u\|_{H_1}^2).
\end{eqnarray*}
Then the condition $({\mathbf{A}}{\mathbf{3}})$ are satisfied with $\gamma_1=\beta_1=2$. Consequently, the assertion follows from Theorem \ref{t1}. \hspace{\fill}$\Box$
\end{proof}

\begin{Rem}
If we take $d=1$,~$f(x)=x$~and~$h=0$, Theorem \ref{SLSPDE} can be used to deal with the classical stochastic Burgers equation. Moreover, it should be noted that one can also allow a polynomial control term $h$ in the drift of \eref{semilinear}. For instance, we may consider $g(x)=-x^3+c_1 x^2+c_2 x~(c_1 , c_2 \in \mathbb{R})$  and show that \eref{5.21}-\eref{5.22} are satisfied. Thus \eref{semilinear} also covers some two-time-scale stochastic reaction-diffusion type equations.
\end{Rem}

%Below we are able to apply our main result in this work to several stochastic 2D hydrodynamical systems.
\subsection{Stochastic 2D hydrodynamical type systems}
The main purpose of this subsection is to consider the two-time-scale stochastic 2D hydrodynamical type systems, which cover a wide class of mathematical models from fluid dynamics (cf. \cite{CM,E}).

For the slow component, let $H_1$ be a separable Hilbert space equipped with norm $|\cdot|$, $A$ be an (unbounded) positive linear self-adjoint operator on $H_1$. Define
$V_1=\mathscr{D}(A^\frac{1}{2})$, and the associated norm $\|v\|=|A^\frac{1}{2}v|$ for any $v\in V_1$. Let $V_1^*$ be the dual space of $V_1$ with respect to the scalar product
$(\cdot,\cdot)$ on $H_1$. Due to this, one can consider a Gelfand triple $V_1 \subset H_1 \subset V_1^*$. Let us denote by $\langle{u},v\rangle$ the dualization between $u\in V_1$ and $v\in V_1^*$, and it is easy to see that $\langle{u},v\rangle =(u,v)$ if $u\in V_1$, $v\in H_1$. There exists an orthonormal basis $\{e_k\}_{k\geq 1}$ on $H_1$ of eigenfunctions of $A$, and the increasing eigenvalue sequence $0<\lambda_1\leq\lambda_2\leq...\leq\lambda_n\leq...\uparrow\infty.$

Let $B:V_1\times V_1 \to V_1^*$ be a continuous map fulfilling

\begin{enumerate}
\item[(C1)] $B: V_1 \times V_1 \to V_1^*$ is a continuous bilinear map.

\item [(C2)] For all $u_i \in V_1, i=1,2,3$
\begin{equation*}
\langle B(u_1,u_2),u_3 \rangle = -\langle B(u_1,u_3),u_2\rangle,~~\langle B(u_1,u_2),u_2 \rangle=0.
\end{equation*}

\item[(C3)] There exists a Banach space $\mathcal{H}$ such that \\
\\
(i) $V_1 \subset \mathcal{H} \subset H_1;$\\
\\
(ii) there exists a constant $a_0>0$ such that
\begin{equation*}
\|u\|_\mathcal{H}^2 \le a_0 |u|\|u\|~~\text{for all} ~ u\in V_1;
\end{equation*}
(iii) for every $\eta>0$ there exists a constant $C_\eta>0$ such that
\begin{equation*}
|\langle B(u_1,u_2),u_3 \rangle| \le \eta\|u_3\|^2+C_\eta\|u_1\|_{\mathcal{H}}^2\|u_2\|_{\mathcal{H}}^2
~~\text{for all}~u_i\in V_1, i=1,2,3.
\end{equation*}
\end{enumerate}
For simplicity of notations, we denote $B(u):=B(u,u)$. Moreover, we consider $H_2:=H_1$, $V_2:=V_1$ and the Gelfand triple for the fast component
$$V_2\subset H_2\subset V_2^*.$$

The following is the two-time-scale stochastic 2D hydrodynamical type systems,
\begin{equation}\label{hys}
\left\{ \begin{aligned}
&dX^{\epsilon,\alpha}_t+[A X^{\epsilon,\alpha}_t+B(X^{\epsilon,\alpha}_t,X^{\epsilon,\alpha}_t)]dt=F_1(X^{\epsilon,\alpha}_t,Y^{\epsilon,\alpha}_t)dt+\sqrt{\epsilon}G_1(X^{\epsilon,\alpha}_t)dW_t,\\
&dY^{\epsilon,\alpha}_t=\frac{1}{\alpha}[A Y^{\epsilon,\alpha}_t+F_2(X^{\epsilon,\alpha}_t,Y^{\epsilon,\alpha}_t)]dt+\frac{1}{\sqrt{\alpha}}G_2dW_t,\\
&X^{\epsilon,\alpha}_0=x\in H_1,~Y^{\epsilon,\alpha}_0=y\in H_2.
\end{aligned} \right.
\end{equation}

\begin{theorem}(stochastic 2D hydrodynamical type systems)\label{2Dhys}
Assume that $B$ satisfies (C1)-(C3) and $F_1,F_2,G_1$ satisfy (\ref{55})-(\ref{60}) above with $A$ replacing operator $L$, if
\begin{equation*}
\lim_{\epsilon\to0}\alpha(\epsilon)=0~\text{and}~\lim_{\epsilon\to0}\frac{\alpha}{\epsilon}=0,
\end{equation*}then $\{X^{\epsilon,\alpha}:\epsilon>0\}$ in (\ref{hys}) satisfies the LDP on $C([0,T]; H_1)\cap L^{2}([0,T]; V_1)$ with the
good rate function $I$ given by $(\ref{rf})$.
\end{theorem}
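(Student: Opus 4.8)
The plan is to verify that the two-time-scale stochastic 2D hydrodynamical system \eref{hys} fits into the abstract framework of Theorem \ref{t1}, so that the conclusion follows immediately. Concretely, with $A(u):=-Au-B(u,u)$ (note the sign convention: in \eref{hys} the leading operator appears on the left-hand side), I must check that $A$ satisfies $({\mathbf{A}}{\mathbf{1}})$--$({\mathbf{A}}{\mathbf{3}})$ with $\gamma_1=2$, $\beta_1=2$, and an appropriate locally bounded function $\rho$, and that $F_1,F_2,G_1$ satisfy Hypotheses \ref{h1}--\ref{h2} under the Lipschitz-type conditions \eref{55}--\eref{60}. For the fast component, since $V_2=V_1$, $H_2=H_1$ and the leading operator is the same self-adjoint $A$, the coercivity $({\mathbf{H}}{\mathbf{3}})$ and growth $({\mathbf{H}}{\mathbf{4}})$ for $F_2(u,v)=-Av+F_2^{\mathrm{nl}}(u,v)$ follow from the spectral gap $\lambda_1>0$ together with the Lipschitz bound \eref{57}; the strict monotonicity $({\mathbf{H}}{\mathbf{2}})$, in particular \eref{h6}, follows from $2\langle -Av_1+Av_2,v_1-v_2\rangle=-2\|v_1-v_2\|^2\le -2\lambda_1|v_1-v_2|^2$ combined with the Lipschitz continuity of $F_2$ in the second variable and the condition \eref{60} that $\lambda_1-L_{F_2}>0$, which furnishes $\kappa:=2(\lambda_1-L_{F_2})>0$; and \eref{h3} is exactly the estimate \eref{57} (Lipschitz in the first variable) after dualization.

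The key steps, in order, are: (1) record the standard properties of $A$ and $B$ from (C1)--(C3) — hemicontinuity of $A$ is immediate from bilinearity and continuity of $B$, giving $({\mathbf{A}}{\mathbf{1}})$; (2) establish the local monotonicity $({\mathbf{A}}{\mathbf{2}})$: for $u,v\in V_1$ write
\begin{equation*}
2\langle A(u)-A(v),u-v\rangle = -2\|u-v\|^2 - 2\langle B(u,u)-B(v,v),u-v\rangle,
\end{equation*}
and use the antisymmetry (C2) to rewrite $\langle B(u,u)-B(v,v),u-v\rangle=\langle B(u-v,v),u-v\rangle$ (since $\langle B(u,u-v),u-v\rangle=0$), then bound it via (C3)(iii) and (C3)(ii) by $\eta\|u-v\|^2+C_\eta\|v\|_{\mathcal H}^2\|u-v\|_{\mathcal H}^2\le \eta\|u-v\|^2 + C_\eta a_0^2\|v\|\,|v|\,|u-v|\,\|u-v\|$; a further application of Young's inequality absorbs the $\|u-v\|$ factor into $\theta_1\|u-v\|^2$ and leaves a term of the form $\rho(v)|u-v|^2$ with $\rho(v)=C\|v\|^2|v|^2 \le C(1+\|v\|_{V_1}^{\gamma_1})(1+\|v\|_{H_1}^{\beta_1})$ for $\gamma_1=\beta_1=2$; adding the $G_1$-Lipschitz term from \eref{56} completes $({\mathbf{A}}{\mathbf{2}})$; (3) verify the growth $({\mathbf{A}}{\mathbf{3}})$: $\|A(u)\|_{V_1^*}^2\le C\|u\|^2+C\|B(u,u)\|_{V_1^*}^2$, and estimate $\|B(u,u)\|_{V_1^*}$ via (C3)(iii) by $C\|u\|_{\mathcal H}^2\le Ca_0|u|\|u\|$, so $\|A(u)\|_{V_1^*}^2\le C(1+\|u\|_{V_1}^2)(1+\|u\|_{H_1}^2)$, i.e.\ $({\mathbf{A}}{\mathbf{3}})$ with $\gamma_1=\beta_1=2$; (4) check Hypothesis \ref{h2} for the fast component as sketched above; (5) invoke Theorem \ref{t1}.

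I expect the main obstacle to be step (2), the careful bookkeeping of the nonlinear term $B$: one must exploit the algebraic identities in (C2) to cancel the ``diagonal'' part, apply (C3)(iii) with a sufficiently small $\eta$, and then chain (C3)(ii) with Young's inequality in exactly the right way so that the $V_1$-norm of $u-v$ appears only to the power $\gamma_1=2$ (absorbed by $\theta_1$) and the remainder is controlled by a function $\rho(v)$ of the prescribed polynomial form. The rest is routine: the Lipschitz conditions \eref{55}--\eref{60} translate directly into the required bounds on $F_1$, $F_2$, $G_1$ and $G_2$, and the spectral gap assumption \eref{60} is precisely what makes the strict monotonicity constant $\kappa$ in $({\mathbf{H}}{\mathbf{2}})$ positive. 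Once all hypotheses are confirmed with $\gamma_1=2$ and $\beta_1=2$, Theorem \ref{t1} yields the LDP on $C([0,T];H_1)\cap L^2([0,T];V_1)$ with the good rate function $I$ from \eref{rf}, as claimed. \hspace{\fill}$\Box$
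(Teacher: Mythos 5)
Your proposal is correct and follows essentially the same route as the paper: check $({\mathbf{A}}{\mathbf{1}})$--$({\mathbf{A}}{\mathbf{3}})$ for $\widetilde{A}(u)=-Au-B(u,u)$ with $\gamma_1=\beta_1=2$ and then invoke Theorem \ref{t1}. The only difference is cosmetic: you derive the local monotonicity and growth estimates directly from (C2)--(C3) (and spell out Hypothesis \ref{h2} for the fast component via the spectral gap and \eref{60}), whereas the paper simply quotes Remark 2.1 and (2.7) of \cite{CM} for those bounds.
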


\begin{proof}
It is suffices to check the conditions $({\mathbf{A}}{\mathbf{1}})$-$({\mathbf{A}}{\mathbf{3}})$ hold for $\widetilde{A}(u):=-Au-B(u,u)$.

$({\mathbf{A}}{\mathbf{1}})$: The hemicontinuity follows from the linearity and bilinearity of maps $A$ and $B$, respectively.

$({\mathbf{A}}{\mathbf{2}})$: It is easy to see that for all $u,v\in V_1$,
\begin{equation}\label{61}
\langle - Au-(-Av),u-v\rangle\leq -\|u-v\|^2.
\end{equation}
According to \cite[Remark 2.1]{CM}, we know that for any constant $\eta>0$ the existence of $C_{\eta}>0$ such that for all $u,v\in V_1$
\begin{equation}\label{62}
|\langle B(u)-B(v),u-v\rangle|\leq \eta\|u-v\|^2+C_{\eta}|u-v|^2\|v\|_{\mathcal{H}}^4.
\end{equation}
Thus the condition $({\mathbf{A}}{\mathbf{2}})$ follows by (\ref{61}) and (\ref{62}) with $\gamma_1=2$.

Furthermore, (2.7) in \cite{CM} implies that $({\mathbf{A}}{\mathbf{3}})$ holds with $\beta_1=2$.

Using (\ref{55})-(\ref{60}), the rest of assumptions given in Theorem \ref{t1} are satisfied.
Then the conclusion here is a consequent result of Theorem \ref{t1}.   \hspace{\fill}$\Box$
\end{proof}

\begin{Rem}
(i) The well-posedness and Freidlin-Wentzell LDP of stochastic 2D hydrodynamical type systems have been investigated by Chueshov and Millet in \cite{CM}. In this work, we generalize the main results of \cite{CM} to the multi-scale case.

(ii) As in \cite{CM}, the main result obtained in this subsection is applicable to many concrete hydrodynamical type systems, for instance,  the stochastic 2D Navier-Stokes equation, stochastic 2D magneto-hydrodynamic equations, stochastic 2D Boussinesq equations, stochastic 2D magnetic B\'{e}nard problem, stochastic 3D Leray-$\alpha$ model and also shell models of turbulence. We also refer the reader to \cite{E,HLL1} and references within for the further studies of these models.
\end{Rem}

\subsection{Stochastic power
law fluid equation}
The stochastic power law fluid equation characterizes the velocity field of a viscous and incompressible non-Newtonian fluids,  one can
see \cite{FR08,MNRR} and references therein for more background of this model.

Let $u:\Lambda\rightarrow \mathbb{R}^d$ denote a
vector field. Set
$$e(u):\Lambda\rightarrow \mathbb{R}^d\otimes
\mathbb{R}^d;
~~e_{i,j}(u)=\frac{\partial_{i}u_{j}+\partial_{j}u_{i}}{2},i,j=1,\cdots,d.$$
$$\tau(u):\Lambda\rightarrow \mathbb{R}^d\otimes
\mathbb{R}^d; ~~~\tau(u)=2\nu(1+|e(u)|)^{p-2}e(u),$$
here $\nu>0$ represents the viscosity coefficient of the fluid, and $p>1$ is a constant.

We consider the following hydrodynamical equation with a power law property
$$\partial_{t}u=div(\tau(u))-(u\cdot\nabla)u-\nabla
p+f,~\text{for}~div(u)=0,$$
where $u=u(t,x)=(u_i(t,x))^d_{i=1}$ and $p$ stand for the
velocity field and pressure of the
fluid, respectively, $f$ denotes the
external force acting on the system,
$$div(\tau(u))=\left(\sum^d_{j=1}\partial_j\tau_{i,j}(u)\right)^d_{i=1}.$$
Moreover, it should mentioned that if one take $p=2$, the power law fluid equation reduces to the classical
Navier-Stokes equation.

For the slow component of two-time-scale situation, we consider the following Gelfand triple
$$V_1\subseteq H_1\subseteq V_1^*,$$
where we set
$$
\setlength{\abovedisplayskip}{8pt}
\setlength{\belowdisplayskip}{8pt} V_1=\Big\{u\in W^{1,p}_0(\Lambda;
\mathbb{R}^d): div(u)=0\Big\}; \ H_1=\Big\{u\in L^2(\Lambda;\mathbb{R}^d):
div(u)=0\Big\}.$$
Let $P_{H_1}$ denote a projection map onto $H_1$ on
$L^2(\Lambda;\mathbb{R}^d)$. Thus one can extend the
operators
$$\mathcal{A}: W^{2,p}(\Lambda;\mathbb{R}^d)\cap V_1\rightarrow
H_1,~\mathcal{A}(u)=P_{H_1}[div(\tau(u))];$$
$$F:\left( W^{2,p}(\Lambda;\mathbb{R}^d)\cap V_1  \right)  \times
\left( W^{2,p}(\Lambda;\mathbb{R}^d) \cap V_1 \right) \rightarrow H_1;$$
$$F(u,v)=-P_{H_1}[(u\cdot\nabla)v],~F(u):=F(u,u)$$
to following maps (see \cite{LR13} for details)
$$\mathcal{A}:V_1\rightarrow V_1^*;  ~~F:V_1\times
V_1\rightarrow V_1^*.$$
Moreover, it is easy to show that
$$\langle \mathcal{A}(u),v\rangle_{V_1}=-\int_\Lambda
\sum_{i,j=1}^{d}\tau_{i,j}(u)e_{i,j}(v) dx,~u,v\in {V_1};$$
$${ }_{V_1^*}\langle F(u,v),w\rangle_{V_1}=-{ }_{V_1^*}\langle F(u,w),v\rangle_{V_1},  ~~{ }_{V_1^*}\langle
F(u,v),v\rangle_{V_1}=0,~u,v,w\in V_1.$$
For the fast component of two-time-scale situation, we consider
$$V_2:=W_0^{1,2}(\Lambda;\mathbb{R}^d)\subset H_2:=L^2(\Lambda;\mathbb{R}^d)\subset(W_0^{1,2}(\Lambda;\mathbb{R}^d))^*=V_2^*.$$
Consequently, the multi-scale stochastic power law fluid equation can be written as follows in variational form,
\begin{equation}\label{PLF}
\left\{ \begin{aligned}
&dX^{\epsilon,\alpha}_t=(\nu \mathcal{A}X^{\epsilon,\alpha}_t+F(X^{\epsilon,\alpha}_t)+f)dt+F_1(X^{\epsilon,\alpha}_t,Y^{\epsilon,\alpha}_t)dt+\sqrt{\epsilon}G_1(X^{\epsilon,\alpha}_t)dW_t,\\
&dY^{\epsilon,\alpha}_t=\frac{1}{\alpha}[\Delta Y^{\epsilon,\alpha}_t+F_2(X^{\epsilon,\alpha}_t,Y^{\epsilon,\alpha}_t)]dt+\frac{1}{\sqrt{\alpha}}G_2dW_t,\\
&X^{\epsilon,\alpha}_0=x\in H_1,~Y^{\epsilon,\alpha}_0=y\in H_2.
\end{aligned} \right.
\end{equation}
for $f\in H_1$, $W_t$ is a cylindrical Wiener process on $U$.

\begin{theorem}\label{SPLF} (stochastic power law fluid equation) Let $p\geq\frac{d+2}{2}$ and $F_1,F_2,G_1$ satisfy (\ref{55})-(\ref{60}) above with $\Delta$ replacing operator $L$, if
\begin{equation*}
\lim_{\epsilon\to0}\alpha(\epsilon)=0~\text{and}~\lim_{\epsilon\to0}\frac{\alpha}{\epsilon}=0,
\end{equation*}then $\{X^{\epsilon,\alpha}:\epsilon>0\}$ in (\ref{PLF}) satisfies the LDP on $C([0,T]; H_1)\cap L^{2}([0,T]; V_1)$ with the
good rate function $I$ given by $(\ref{rf})$..
\end{theorem}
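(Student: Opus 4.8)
The plan is to verify that the operator $\widetilde{A}(u):=\nu\mathcal A(u)+F(u)+f$ satisfies Hypotheses $({\mathbf A}{\mathbf 1})$--$({\mathbf A}{\mathbf 3})$ of Hypothesis \ref{h1} with $\gamma_1=2$ and a suitable $\beta_1$, so that Theorem \ref{t1} applies verbatim to the slow equation of \eqref{PLF}; the remaining coefficients $F_1,F_2,G_1,G_2$ inherit \eqref{55}--\eqref{60}, and in particular the fast component $F_2$ with $V_2=W_0^{1,2}$, $H_2=L^2$ satisfies Hypothesis \ref{h2} exactly as in Remark \ref{r1}(ii) (with $\gamma_2=2$), after checking that \eqref{60} with $\Delta$ in place of $L$ gives the strict monotonicity constant $\kappa$. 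The structural identities ${}_{V_1^*}\langle F(u,v),w\rangle_{V_1}=-{}_{V_1^*}\langle F(u,w),v\rangle_{V_1}$ and ${}_{V_1^*}\langle F(u,v),v\rangle_{V_1}=0$ are already recorded above, and these are the analogues of (C1)--(C2); the power-law viscous term plays the role of the linear operator $A$ in the hydrodynamical setting but is now a genuine nonlinearity, so the monotonicity estimates are the real content.

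First I would treat hemicontinuity $({\mathbf A}{\mathbf 1})$: for fixed $u,v,w\in V_1$ the map $\lambda\mapsto{}_{V_1^*}\langle\widetilde A(u+\lambda v),w\rangle_{V_1}$ is continuous because $F$ is bilinear (hence $\lambda\mapsto F(u+\lambda v)$ is a quadratic polynomial in $\lambda$ with values in $V_1^*$) and because $e\mapsto 2\nu(1+|e|)^{p-2}e$ is continuous, so dominated convergence handles $\int_\Lambda\sum_{i,j}\tau_{i,j}(u+\lambda v)e_{i,j}(w)\,dx$. For $({\mathbf A}{\mathbf 2})$, the monotonicity of the power-law operator is classical: one shows there is $c_p>0$ with
\begin{equation*}
{}_{V_1^*}\langle\nu\mathcal A(u)-\nu\mathcal A(v),u-v\rangle_{V_1}=-\int_\Lambda\sum_{i,j}\bigl(\tau_{i,j}(u)-\tau_{i,j}(v)\bigr)\bigl(e_{i,j}(u)-e_{i,j}(v)\bigr)\,dx\le -c_p\|u-v\|_{V_1}^2,
\end{equation*}
using the standard inequality $(\,|a|^{p-2}a-|b|^{p-2}b\,)\cdot(a-b)\ge c_p|a-b|^2$ valid for $p\ge 2$ (with the $(1+|e|)^{p-2}$ weight this even works without restriction after bounding below by $(1+|a|\vee|b|)^{p-2}\ge$ const on bounded sets, but for $p\ge2$ one gets the clean $\|u-v\|_{V_1}^2$ bound), together with Korn's inequality to pass from $\|e(u-v)\|_{L^2}$ to $\|u-v\|_{V_1}$. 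For the convective term, the antisymmetry gives ${}_{V_1^*}\langle F(u)-F(v),u-v\rangle_{V_1}={}_{V_1^*}\langle F(u-v,v),u-v\rangle_{V_1}\cdot(-1)$ after the usual manipulation $F(u)-F(v)=F(u-v,u)+F(v,u-v)$ and cancelling the terms that vanish, leaving $|{}_{V_1^*}\langle F(u)-F(v),u-v\rangle_{V_1}|\le C\|u-v\|_{L^4}^2\|v\|_{V_1}$ in $d\le 2$ or the corresponding Gagliardo--Nirenberg bound in $d=3$; by \eqref{GN_inequality} with the condition $p\ge\frac{d+2}{2}$ (which is exactly what makes $W^{1,p}\hookrightarrow L^{\text{enough}}$) this is absorbed as $\tfrac{c_p}{2}\|u-v\|_{V_1}^2+(K+\rho(v))\|u-v\|_{H_1}^2$ with $\rho(v)\le C(1+\|v\|_{V_1}^2)(1+\|v\|_{H_1}^{\beta_1})$ for an appropriate $\beta_1$ depending on $d$ and $p$; the Lipschitz conditions on $F_1,G_1$ are \eqref{55}--\eqref{56}. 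Finally $({\mathbf A}{\mathbf 3})$: $\|\nu\mathcal A(u)\|_{V_1^*}\le C(1+\|u\|_{V_1}^{p-1})$ from $|\tau(u)|\le 2\nu(1+|e(u)|)^{p-1}$ and $\|F(u)\|_{V_1^*}\le C\|u\|_{L^{2p/(p-1)}}^2$, both of which yield $\|\widetilde A(u)\|_{V_1^*}^{2}\le C(1+\|u\|_{V_1}^{2})(1+\|u\|_{H_1}^{\beta_1})$ after Gagliardo--Nirenberg, again using $p\ge\frac{d+2}{2}$.

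The main obstacle is the convective nonlinearity in dimension $d=3$: unlike the 2D hydrodynamical case of Theorem \ref{2Dhys} where condition (C3) supplies the interpolation directly, here I must exploit the higher integrability coming from $V_1=W_0^{1,p}$ with $p\ge\frac{d+2}{2}$ to control ${}_{V_1^*}\langle F(u),w\rangle_{V_1}$ and the local-monotonicity defect $\rho$; verifying that the exponents produced by \eqref{GN_inequality} genuinely satisfy the growth bound $\rho(v)\le C(1+\|v\|_{V_1}^{\gamma_1})(1+\|v\|_{H_1}^{\beta_1})$ with $\gamma_1=2$ (not $\gamma_1=p$) is the delicate bookkeeping — this is the place where the threshold $p\ge\frac{d+2}{2}$ is used, mirroring the well-posedness analysis of \cite{LR13}. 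Once these three conditions are in hand, together with \eqref{55}--\eqref{60} guaranteeing Hypothesis \ref{h2} and the assumptions on $F_1,G_1$, the conclusion is an immediate application of Theorem \ref{t1}, giving the LDP on $C([0,T];H_1)\cap L^2([0,T];V_1)$ with rate function \eqref{rf}. \hspace{\fill}$\Box$
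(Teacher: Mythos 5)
Your overall strategy --- verify $({\mathbf{A}}{\mathbf{1}})$--$({\mathbf{A}}{\mathbf{3}})$ for $\widetilde A(u):=\nu\mathcal A(u)+F(u)+f$ and then quote Theorem \ref{t1}, with (\ref{55})--(\ref{60}) handling $F_1,F_2,G_1,G_2$ --- is exactly the paper's, but your insistence on $\gamma_1=2$ creates a genuine gap. The pointwise inequality $(|a|^{p-2}a-|b|^{p-2}b)\cdot(a-b)\ge c_p|a-b|^2$ that you invoke is false for $p>2$ (near $a=b=0$ the left-hand side degenerates like $|a-b|^p$; the correct exponent on the right is $p$). What is true, thanks to the shift in $\tau(u)=2\nu(1+|e(u)|)^{p-2}e(u)$, is the estimate of \cite[Lemma 1.19]{MNRR}, $\sum_{i,j}(\tau_{i,j}(u)-\tau_{i,j}(v))(e_{i,j}(u)-e_{i,j}(v))\geq C\big(|e(u)-e(v)|^2+|e(u)-e(v)|^p\big)$, which after integration and the Korn-type inequality yields a lower bound $C\big(\|u-v\|_{W^{1,2}}^2+\|u-v\|_{W^{1,p}}^p\big)$ for $-{}_{V_1^*}\langle\nu\mathcal A(u)-\nu\mathcal A(v),u-v\rangle_{V_1}$. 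Neither term dominates $\|u-v\|_{V_1}^2=\|u-v\|_{W^{1,p}}^2$ when $p>2$ (the $W^{1,2}$-norm does not control the $W^{1,p}$-norm, and $\|w\|_{W^{1,p}}^p\ll\|w\|_{W^{1,p}}^2$ for small $w$), so the strong monotonicity $-\theta_1\|u-v\|_{V_1}^{\gamma_1}$ with $\gamma_1=2$ is simply not available. Moreover $({\mathbf{A}}{\mathbf{3}})$ with $\gamma_1=2$ would require $\|\widetilde A(u)\|_{V_1^*}^{2}\leq C(1+\|u\|_{V_1}^{2})(1+\|u\|_{H_1}^{\beta_1})$, which is incompatible with $\|\mathcal A(u)\|_{V_1^*}\leq C(1+\|u\|_{V_1}^{p-1})$: the left side grows like $\|u\|_{V_1}^{2(p-1)}$ and the $H_1=L^2$ factor cannot absorb the excess $V_1$-growth. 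Hence the ``delicate bookkeeping'' you defer cannot succeed for any $p>2$, i.e. for all admissible cases except the borderline $d=2$, $p=2$ one.

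The paper's proof instead takes $\gamma_1=p$ (and this also matters for the well-posedness framework and Definition \ref{d1}): the full lower bound above supplies both the required term $-\theta_1\|u-v\|_{V_1}^{p}$ and a spare $-C\|u-v\|_{W^{1,2}}^2$ into which the convective part is absorbed, via the Gagliardo--Nirenberg inequality (\ref{GN_inequality}), as ${}_{V_1^*}\langle F(u)-F(v),u-v\rangle_{V_1}\leq\varepsilon\|u-v\|_{W^{1,2}}^{2}+C_\varepsilon\|v\|_{V_1}^{\frac{2p}{2p-d}}\|u-v\|_{H_1}^2$, giving $\rho(v)=C_\varepsilon\|v\|_{V_1}^{\frac{2p}{2p-d}}$; this satisfies the growth restriction in $({\mathbf{A}}{\mathbf{2}})$ precisely because $\frac{2p}{2p-d}\leq p$ is equivalent to $p\geq\frac{d+2}{2}$. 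Likewise $({\mathbf{A}}{\mathbf{3}})$ with $\gamma_1=p$ follows from $\|F(v)\|_{V_1^*}\leq\|v\|_{L^{2p/(p-1)}}^{2}$ and interpolation, again using $p\geq\frac{d+2}{2}$, while $\|\mathcal A(u)\|_{V_1^*}^{\frac{p}{p-1}}\leq C(1+\|u\|_{V_1}^{p})$ is immediate from $|\tau_{i,j}(u)|\leq C(1+|e(u)|)^{p-1}$. Theorem \ref{t1} then gives the LDP on $C([0,T];H_1)\cap L^{p}([0,T];V_1)$, which is at least as strong as the stated $L^{2}([0,T];V_1)$ assertion. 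Your treatment of $({\mathbf{A}}{\mathbf{1}})$, your decomposition of the convective difference, and your handling of the fast component and of (\ref{55})--(\ref{60}) are fine and can be kept; redo the verification with $\gamma_1=p$ and the argument closes.
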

\begin{proof}
First, we want to check the conditions $({\mathbf{A}}{\mathbf{1}})$-$({\mathbf{A}}{\mathbf{3}})$. Without loss of generality, we assume that viscosity coefficient $\nu=1$.
According to \cite[Lemma 1.19]{MNRR}, one shows that
$$\int_\Lambda|e(u)|^p dx\geq
C_{p}\|u\|_{W^{1,p}},u\in W^{1,p}_0(\Lambda;\mathbb{R}^d);$$
$$\sum^d_{i,j=1}\tau_{i,j}(u)e_{i,j}(u)\geq C(|e(u)|^p-1);$$
$$\sum_{i,j=1}^
{d}(\tau_{i,j}(u)-\tau_{i,j}(v))(e_{i,j}(u)-e_{i,j}(v))\geq
C(|e(u)-e(v)|^2+|e(u)-e(v)|^p);$$
$$|\tau_{i,j}(u)|\leq C(1+|e(u)|)^{p-1},i,j=1...,d.$$
Therefore, by means of the estimates above, for all $u,v\in V_1$,
\begin{eqnarray*}
% \nonumber to remove numbering (before each equation)
  { }_{V_1^*}\langle F(u)-F(v),u-v\rangle_{V_1} =&&\!\!\!\!\!\!\!\! -{ }_{V_1^*}\langle F(u-v),v\rangle_{V_1} \\
 =&&\!\!\!\!\!\!\!\!{ }_{V_1^*}\langle F(u-v,v),u-v\rangle_{V_1}\\
   \leq&&\!\!\!\!\!\!\!\! C\|v\|_{V_1}\|u-v\|_{L^{\frac{2p}{p-1}}}^{2}\\
  \leq&&\!\!\!\!\!\!\!\!
  C\|v\|_{V_1}\|u-v\|_{W^{1,2}}^{\frac{d}{p}}\|u-v\|_{H_1}^{\frac{2p-d}{p}}\\
   \leq&&\!\!\!\!\!\!\!\! \varepsilon\|u-v\|_{W^{1,2}}^{2}+C_\varepsilon\|v\|_{V_1}^{\frac{2p}{2p-d}}\|u-v\|_{H_1}^2.
\end{eqnarray*}
It follows that
\begin{eqnarray*}
&&{ }_{V_1^*}\langle
\mathcal{A}(u)+F(u)-\mathcal{A}(v)-F(v),u-v\rangle_{V_1} \\
=\!\!\!\!\!\!\!\!&&-\int_\Lambda\sum_{i,j=1}^{d}(\tau_{i,j}(u)-\tau_{i,j}(v))(e_{i,j}(u)-e_{i,j}(v))\
d x \\
\leq\!\!\!\!\!\!\!\!&&-C\|e(u)-e(v)\|_{H_1}^2\\
\leq\!\!\!\!\!\!\!\!&&-C\|u-v\|_{W^{1,2}}^{2}.
\end{eqnarray*}
Consequently, one can get that
$${ }_{V_1^*}\langle \mathcal{A}(u)+F(u)-\mathcal{A}(v)-F(v)\rangle_{V_1}
\leq -(C-\varepsilon)\|u-v\|_{W^{1,2}}^{2}
+C_{\varepsilon}\|v\|_{V_1}^{\frac{2p}{2p-d}}\|u-v\|_{H_1}^2,$$
then the condition $({\mathbf{A}}{\mathbf{2}})$ is satisfied with $\rho(v) = C_{\varepsilon} \|v \|_{V_1}^{\frac{4q}{4q-d}}$ and $\gamma_1 = p$.

For the condition $({\mathbf{A}}{\mathbf{3}})$, we infer that
$$|{ }_{V_1^*}\langle F(v),u\rangle_{V_1}|=|{ }_{V_1^*}\langle
F(v,u),v\rangle_{V_1}|\leq\|u\|_{V_1}\|v\|_{L^{\frac{2p}{p-1}}}^{2},~u,v\in V_1,$$
Then one can obtain
$$\|F(v)\|_{V_1^*}\leq\|v\|_{L^{\frac{2p}{p-1}}}^2,~v\in V_1.$$
Let $q=\frac{dp}{d-p}, \gamma=\frac{d}{(d+2)p-2d}$, using the Gagliardo-Nirenberg interpolation inequality \eref{GN_inequality} leads to
$$\|v\|_{L^{\frac{2p}{p-1}}}\leq\|v\|_{L^q}^{\gamma}\|v\|_{L^2}^{1-\gamma}\leq
C\|v\|_{V_1}^\gamma\|v\|_{H_1}^{1-\gamma}.$$ Since $p\geq\frac{d+2}{2}$,
obviously, the
condition $({\mathbf{A}}{\mathbf{3}})$ holds.

By means of (\ref{55})-(\ref{60}), the remainder of assumptions  in Theorem \ref{t1} hold.
Consequently, Theorem \ref{SPLF} is a consequent result of Theorem \ref{t1}.   \hspace{\fill}$\Box$
\end{proof}

\begin{Rem}
Besides the above example, our results are also applicable to e.g. the multi-scale stochastic Ladyzhenskaya model, which is a higher order variant of the power law fluid and pioneered by Ladyzhenskaya \cite{L70} (cf. e.g. \cite{ZD10}). We omit the detailed proof here to keep down the length of this paper.
\end{Rem}

In the sequel, we aim to prove Theorem \ref{t1} and \ref{t3}.
\section{Frozen equation and stochastic control problem}
\setcounter{equation}{0}
 \setcounter{definition}{0}
In this section, we consider the frozen equation corresponding to the fast component of system (\ref{e1}) for any fixed slow component $x\in H_1$ and the skeleton equation (\ref{e2}) associated with the slow equation of system (\ref{e1}). We will present the existence and uniqueness of invariant probability measures and the exponential ergodicity with respect to the frozen equations in order to define the coefficient $\bar{F_1}$ in the skeleton equation (\ref{e2}). Then we consider the stochastic control problem  for the system (\ref{e1}) and give some crucial lemmas which will be used frequently throughout this paper.

\subsection{Frozen and skeleton equation}
For each fixed slow component $x\in H_1$, the frozen equation with respect to the fast component of system (\ref{e1}) is given by
\begin{equation}\label{e3}
\left\{ \begin{aligned}
&dY_t=F_2(x,Y_t)dt+G_2d\widetilde{W}_t,\\
&Y_0=y\in H_2,
\end{aligned} \right.
\end{equation}
where $\widetilde{W}_t$ is a cylindrical Wiener process on Hilbert space $U$ and is independent of $W_t$. It is obvious that following from \cite[Theorem 4.2.4]{LR1}, by Hypothesis \ref{h2}, there is a unique solution denoted by $Y_t^{x,y}$ to Eq.~(\ref{e3}), which is a homogeneous Markov process.

Let $\{P^x_t\}_{t\geq 0}$ denote the Markov transition semigroup of process $\{Y_t^{x,y}\}_{t\geq 0}$, i.e. for any bounded measurable map $f$ on $H_2$,
$$P^x_tf(y):=\mathbb{E}f(Y_t^{x,y}),~y\in H_2,~t>0.$$

According to \cite[Theorem 4.3.9]{LR1}, we have the following exponential ergodicity result.
\begin{lemma}
Under Hypothesis \ref{h2}, there is a constant $C>0$ such that for all Lipschitz function $f:H_2\to H_1$ we have
\begin{equation}\label{er}
\Big\|P^x_tf(y)-\int_{H_2}f(z)\mu^x(dz)\Big\|_{H_1}\leq C(1+\|x\|_{H_1}+\|y\|_{H_2})e^{-\frac{\kappa t}{2}}\|f\|_{Lip},
\end{equation}
here $\mu^x$ is the unique invariant probability measure of $\{P^x_t\}_{t\geq 0}$, constant $\kappa>0$ is defined in $({\mathbf{H}}{\mathbf{2}})$ and $\|f\|_{Lip}$ is the Lipschitz constant of $f$.
\end{lemma}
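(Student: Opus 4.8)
This statement is essentially \cite[Theorem 4.3.9]{LR1} specialized to the frozen equation (\ref{e3}), and I would sketch the argument in our setting. The plan rests on three ingredients: a contraction estimate in $H_2$ for the frozen flow $Y^{x,y}_\cdot$, a uniform-in-time second moment bound, and the construction of the invariant measure $\mu^x$ directly from these two facts, so that no compactness of the embedding $V_2\subset H_2$ is needed.

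\textbf{Step 1 (contraction).} Fix $x\in H_1$ and starting points $y_1,y_2\in H_2$. Because the noise coefficient $G_2$ is constant, the stochastic integrals cancel upon subtracting the two copies of (\ref{e3}); applying It\^o's formula to $\|Y^{x,y_1}_t-Y^{x,y_2}_t\|_{H_2}^2$, taking expectations, and invoking the strict monotonicity $({\mathbf{H}}{\mathbf{2}})$ in the form (\ref{h6}) (both solutions carry the same first argument $x$) gives
\[
\frac{d}{dt}\,\EE\|Y^{x,y_1}_t-Y^{x,y_2}_t\|_{H_2}^2
=2\,\EE\,{}_{V_2^*}\big\langle F_2(x,Y^{x,y_1}_t)-F_2(x,Y^{x,y_2}_t),\,Y^{x,y_1}_t-Y^{x,y_2}_t\big\rangle_{V_2}
\le -\kappa\,\EE\|Y^{x,y_1}_t-Y^{x,y_2}_t\|_{H_2}^2 .
\]
Gr\"onwall's lemma yields $\EE\|Y^{x,y_1}_t-Y^{x,y_2}_t\|_{H_2}^2\le e^{-\kappa t}\|y_1-y_2\|_{H_2}^2$, hence by Cauchy--Schwarz $\EE\|Y^{x,y_1}_t-Y^{x,y_2}_t\|_{H_2}\le e^{-\kappa t/2}\|y_1-y_2\|_{H_2}$.

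\textbf{Step 2 (moment bound).} Applying It\^o's formula to $\|Y^{x,y}_t\|_{H_2}^2$, using the coercivity $({\mathbf{H}}{\mathbf{3}})$ together with the dissipativity obtained from (\ref{h6}) at $v_2=0$ (which supplies the $-\kappa\|\cdot\|_{H_2}^2$ term needed to dominate the non-dissipative $+C\|\cdot\|_{H_2}^2$ appearing in $({\mathbf{H}}{\mathbf{3}})$), and adding the finite It\^o correction $\|G_2\|_{L_2(U,H_2)}^2$, one obtains a differential inequality of the form $\frac{d}{dt}\EE\|Y^{x,y}_t\|_{H_2}^2\le -c\,\EE\|Y^{x,y}_t\|_{H_2}^2+C(1+\|x\|_{H_1}^2)$ for some $c>0$, whence $\sup_{t\ge0}\EE\|Y^{x,y}_t\|_{H_2}^2\le C(1+\|x\|_{H_1}^2+\|y\|_{H_2}^2)$.

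\textbf{Step 3 (invariant measure and conclusion).} From Steps 1 and 2, and the Markov property, the family $\{\mathrm{Law}(Y^{x,y}_t):t\ge0\}$ is Cauchy in the $L^2$-Wasserstein distance (the contraction bounds $W_2(\mathrm{Law}(Y^{x,y}_t),\mathrm{Law}(Y^{x,y}_{t+s}))$ by $e^{-\kappa t/2}$ times a constant uniform in $s$, by Step 2), so it converges to a unique invariant probability measure $\mu^x$ with $\int_{H_2}\|z\|_{H_2}^2\,\mu^x(dz)\le C(1+\|x\|_{H_1}^2)$; no compact embedding is used here. Finally, for Lipschitz $f:H_2\to H_1$, invariance gives $\int_{H_2}f\,d\mu^x=\int_{H_2}\EE f(Y^{x,z}_t)\,\mu^x(dz)$, so
\[
\Big\|P^x_t f(y)-\int_{H_2}f(z)\,\mu^x(dz)\Big\|_{H_1}
\le\int_{H_2}\EE\big\|f(Y^{x,y}_t)-f(Y^{x,z}_t)\big\|_{H_1}\,\mu^x(dz)
\le\|f\|_{Lip}\,e^{-\kappa t/2}\!\int_{H_2}\!\big(\|y\|_{H_2}+\|z\|_{H_2}\big)\,\mu^x(dz),
\]
and bounding the last integral by $C(1+\|x\|_{H_1})$ via Step 2 produces (\ref{er}).

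\textbf{Main obstacle.} The genuinely delicate point is Step 2: $({\mathbf{H}}{\mathbf{3}})$ on its own contains the non-dissipative term $+C\|v\|_{H_2}^2$, so one must carefully combine it with the dissipativity coming from the strict monotonicity (\ref{h6}) to produce a negative drift rate $-c$; when $\gamma_2$ is small one may additionally need a Poincar\'e-type inequality $\|v\|_{V_2}^{\gamma_2}\ge c\|v\|_{H_2}^{\gamma_2}$ (available on Poincar\'e domains) to close the estimate. The other point worth emphasizing is that $\mu^x$ is built purely from the Wasserstein contraction and the moment bound, so that, consistently with the aim of this paper, no compactness of $V_2\subset H_2$ enters.
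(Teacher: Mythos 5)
Your sketch is correct in substance, and it is worth noting that the paper offers no proof of this lemma at all: it is invoked verbatim from \cite[Theorem 4.3.9]{LR1}. Your three steps (synchronous-coupling contraction from $({\mathbf{H}}{\mathbf{2}})$ with additive noise, a uniform-in-time second moment bound, construction of $\mu^x$ as a Wasserstein limit, and then the Lipschitz estimate via invariance of $\mu^x$) are exactly the standard argument underlying that citation, so you are reconstructing the quoted result rather than taking a genuinely different route. Two small refinements: in Step 1 the contraction is in fact pathwise, since the additive noise cancels in the difference equation, so Gronwall applies before any expectation is taken; and in Step 2, invoking (\ref{h6}) at $v_2=0$ leaves the cross term $2\,{}_{V_2^*}\langle F_2(x,0),v\rangle_{V_2}$, which lives in the $V_2^*$--$V_2$ duality and is not controlled by $H_2$-norms alone — it must be absorbed using Young's inequality together with the $-\theta_2\|v\|_{V_2}^{\gamma_2}$ term supplied by $({\mathbf{H}}{\mathbf{3}})$ and the growth bound $({\mathbf{H}}{\mathbf{4}})$ on $\|F_2(x,0)\|_{V_2^*}$ (this is precisely how the inequality $2\,{}_{V_2^*}\langle F_2(u,v),v\rangle_{V_2}\le-\eta\|v\|_{H_2}^2+C(1+\|u\|_{H_1}^2)$ is obtained in the proof of Lemma \ref{l6}, via \cite[Lemma 4.3.8]{LR1}). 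Once this absorption is done the negative drift rate comes entirely from $({\mathbf{H}}{\mathbf{2}})$, so your caveat about needing a Poincar\'e-type inequality for small $\gamma_2$ is unnecessary and can be dropped; otherwise the argument, including the compactness-free Wasserstein construction of $\mu^x$ and the final estimate $\int_{H_2}\|z\|_{H_2}\,\mu^x(dz)\le C(1+\|x\|_{H_1})$, is sound.
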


\begin{lemma}\label{l4}
There is a constant $C>0$ such that for any $x_1,x_2\in H_1$ and $y\in H_2$ we have
$$\sup_{t\geq 0}\mathbb{E}\|Y^{x_1,y}_t-Y^{x_2,y}_t\|_{H_2}^2\leq C\|x_1-x_2\|_{H_1}^2.$$
\end{lemma}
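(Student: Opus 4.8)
The plan is to derive the estimate directly from the equations satisfied by $Y^{x_1,y}$ and $Y^{x_2,y}$. Set $Z_t := Y^{x_1,y}_t - Y^{x_2,y}_t$. Since both processes are driven by the same noise $G_2\,d\widetilde W_t$ and start from the same point $y$, the difference $Z_t$ solves a random \emph{deterministic} (pathwise) evolution equation with $Z_0 = 0$:
\[
dZ_t = \big[F_2(x_1, Y^{x_1,y}_t) - F_2(x_2, Y^{x_2,y}_t)\big]\,dt .
\]
First I would apply It\^o's formula (in the variational/Gelfand-triple sense, justified as in \cite[Theorem 4.2.5]{LR1}) to $\|Z_t\|_{H_2}^2$; the martingale part vanishes because the diffusion coefficients coincide, so
\[
\|Z_t\|_{H_2}^2 = 2\int_0^t {}_{V_2^*}\langle F_2(x_1,Y^{x_1,y}_s) - F_2(x_2,Y^{x_2,y}_s), Z_s\rangle_{V_2}\,ds .
\]

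Next I would split the integrand by adding and subtracting $F_2(x_1, Y^{x_2,y}_s)$ and apply the two structural hypotheses on $F_2$. The term ${}_{V_2^*}\langle F_2(x_1,Y^{x_1,y}_s) - F_2(x_1,Y^{x_2,y}_s), Z_s\rangle_{V_2}$ is bounded above by $-\tfrac{\kappa}{2}\|Z_s\|_{H_2}^2$ using the strict monotonicity (\ref{h6}), and the cross term ${}_{V_2^*}\langle F_2(x_1,Y^{x_2,y}_s) - F_2(x_2,Y^{x_2,y}_s), Z_s\rangle_{V_2}$ is bounded by $C\|x_1-x_2\|_{H_1}\|Z_s\|_{H_2}$ using (\ref{h3}). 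Combining these and applying Young's inequality $C\|x_1-x_2\|_{H_1}\|Z_s\|_{H_2}\le \tfrac{C^2}{\kappa}\|x_1-x_2\|_{H_1}^2 + \tfrac{\kappa}{4}\|Z_s\|_{H_2}^2$, I obtain, after taking expectations,
\[
\mathbb{E}\|Z_t\|_{H_2}^2 \le -\frac{\kappa}{2}\int_0^t \mathbb{E}\|Z_s\|_{H_2}^2\,ds + \frac{2C^2}{\kappa}\, t\,\|x_1-x_2\|_{H_1}^2 .
\]
A Gronwall-type argument then would not immediately give a uniform-in-$t$ bound from this form; instead I would work with the differential inequality $\frac{d}{dt}\mathbb{E}\|Z_t\|_{H_2}^2 \le -\frac{\kappa}{2}\mathbb{E}\|Z_t\|_{H_2}^2 + \frac{2C^2}{\kappa}\|x_1-x_2\|_{H_1}^2$, which by the integrating factor $e^{\kappa t/2}$ yields $\mathbb{E}\|Z_t\|_{H_2}^2 \le \frac{4C^2}{\kappa^2}\|x_1-x_2\|_{H_1}^2$ for all $t\ge 0$. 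Taking the supremum over $t\ge 0$ gives the claim with $C' = 4C^2/\kappa^2$.

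The main obstacle I anticipate is the rigorous justification of It\^o's formula for $\|Z_t\|_{H_2}^2$ and of the differential inequality: one must check that $Z$ inherits enough regularity (namely $Z \in L^{\gamma_2}([0,T];V_2)\cap C([0,T];H_2)$, which follows from the corresponding regularity of $Y^{x_1,y}$ and $Y^{x_2,y}$ guaranteed by the solution theory under Hypothesis \ref{h2}) so that the duality pairing ${}_{V_2^*}\langle F_2(\cdot,\cdot), Z_s\rangle_{V_2}$ is well-defined and integrable — here the growth condition $({\mathbf H}{\mathbf 3})$/$({\mathbf H}{\mathbf 4})$ together with (\ref{h3}) is used. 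A secondary subtlety is differentiating the expectation in $t$; this can be avoided altogether by keeping everything in integral form and instead applying Gronwall's inequality after absorbing the $-\tfrac{\kappa}{2}$ term, or by noting that $t\mapsto \mathbb{E}\|Z_t\|_{H_2}^2$ is absolutely continuous (again from the It\^o expansion), so the differential inequality holds for a.e.\ $t$, which suffices. Everything else is a routine application of the hypotheses and elementary inequalities.
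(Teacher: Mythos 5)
Your proposal is correct and follows essentially the same route as the paper: the paper likewise writes the energy equality for $Z_t=Y^{x_1,y}_t-Y^{x_2,y}_t$, splits the pairing by inserting $F_2(x_1,Y^{x_2,y}_t)$, uses $({\mathbf{H}}{\mathbf{2}})$ and (\ref{h3}) with Young's inequality, and concludes via the comparison theorem for the resulting differential inequality. The only differences are cosmetic (your explicit constants $\kappa/2,\,4C^2/\kappa^2$ versus the paper's $\kappa-\varepsilon_0$ and $C_\eta$).
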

\begin{proof}
Taking $Z_t^{x_1,x_2}:=Y^{x_1,y}_t-Y^{x_2,y}_t$, which satisfies
\begin{equation}\label{e4}
\frac{dZ_t^{x_1,x_2}}{dt}=F_2(x_1,Y^{x_1,y}_t)-F_2(x_2,Y^{x_2,y}_t),~Z_0=0.
\end{equation}
Using the energy equality of $Z_t^{x_1,x_2}$ we have
\begin{eqnarray*}
\frac{1}{2}\|Z_t^{x_1,x_2}\|_{H_2}^2=\int_0^t{}_{V_2^*}\langle F_2(x_1,Y^{x_1,y}_s)-F_2(x_2,Y^{x_2,y}_s),Z_s^{x_1,x_2}\rangle_{V_2}ds.
\end{eqnarray*}
By taking expectation it leads to
\begin{eqnarray*}
\!\!\!\!\!\!\!\!&&\frac{d\mathbb{E}\|Z_t^{x_1,x_2}\|_{H_2}^2}{dt}
\nonumber\\
~=\!\!\!\!\!\!\!\!&&2\mathbb{E}\Big[{}_{V_2^*}\langle F_2(x_1,Y^{x_1,y}_t)-F_2(x_1,Y^{x_2,y}_t),Z_t^{x_1,x_2}\rangle_{V_2}\Big] \nonumber\\
  \!\!\!\!\!\!\!\!&&+2\mathbb{E}\Big[{}_{V_2^*}\langle F_2(x_1,Y^{x_2,y}_t)-F_2(x_2,Y^{x_2,y}_t),Z_t^{x_1,x_2}\rangle_{V_2}\Big]
\nonumber\\
~\leq\!\!\!\!\!\!\!\!&&-\kappa\mathbb{E}\|Z_t^{x_1,x_2}\|_{H_2}^2+\varepsilon_0\mathbb{E}\|Z_t^{x_1,x_2}\|_{H_2}^2+C_{\varepsilon_0}\|x_1-x_2\|_{H_1}^2
\nonumber\\
~=\!\!\!\!\!\!\!\!&&-(\kappa-\varepsilon_0)\mathbb{E}\|Z_t^{x_1,x_2}\|_{H_2}^2+C_{\varepsilon_0}\|x_1-x_2\|_{H_1}^2,
\end{eqnarray*}
where we used the condition $({\mathbf{H}}{\mathbf{2}})$ and Young's inequality in the second step.

Choosing $\varepsilon_0$ small enough and using the comparison theorem yields that for all $t>0$,
\begin{eqnarray*}
\mathbb{E}\|Z_t^{x_1,x_2}\|_{H_2}^2\leq C\|x_1-x_2\|_{H_1}^2\int_0^te^{-\eta(t-s)}ds\leq C_\eta\|x_1-x_2\|_{H_1}^2,
\end{eqnarray*}
where we denote $\eta:=\kappa-\varepsilon_0>0$, which implies the assertion. \hspace{\fill}$\Box$
\end{proof}

Now we recall $\bar{X}^{\phi}_t$ defined in the skeleton equation (\ref{e2}).  The existence and uniqueness of solutions to Eq.~(\ref{e2}) is established in the following lemma, moreover, some important energy estimates for the skeleton equation (\ref{e2}) are also derived for later use.
\begin{lemma}\label{l5}
Suppose that Hypothesis \ref{h2} holds. For each $x\in H_1$ and $\phi\in L^2([0,T];U)$, there exists a unique solution $(\bar{X}^{\phi}_t)_{t\in[0, T]}$ to Eq.~(\ref{e2}), moreover, we have
\begin{equation}\label{a1}
\sup_{\phi\in S_M}\Big\{\sup_{t\in[0,T]}\|\bar{X}^{\phi}_t\|_{H_1}^2+\theta_1\int_0^T\|\bar{X}^{\phi}_t\|_{V_1}^{\gamma_1}dt\Big\}\leq C(1+\|x\|_{H_1}^2),
\end{equation}
where $C>0$ is a constant.
\end{lemma}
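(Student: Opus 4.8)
\textbf{Proof plan for Lemma \ref{l5}.}
The plan is to prove existence and uniqueness for the deterministic evolution equation \eref{e2} via the standard monotonicity/compactness argument of the variational framework, and then to derive the a priori bound \eref{a1} uniformly over $\phi\in S_M$ by an energy estimate combined with Gronwall's lemma. First, I would observe that the coefficient $\bar F_1$ is well-defined: since $F_1(x,\cdot)$ is Lipschitz in its arguments by $({\mathbf{A}}{\mathbf{2}})$ and $\mu^x$ is the invariant measure of the frozen equation, $\bar F_1(x)=\int_{H_2}F_1(x,y)\mu^x(dy)$ exists, and moreover $\bar F_1:H_1\to H_1$ inherits Lipschitz continuity and linear growth. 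Indeed, using Lemma \ref{l4} together with \eref{er} and the invariance of $\mu^{x_1},\mu^{x_2}$, one gets
$$\|\bar F_1(x_1)-\bar F_1(x_2)\|_{H_1}\le C\|x_1-x_2\|_{H_1},\qquad \|\bar F_1(x)\|_{H_1}\le C(1+\|x\|_{H_1}).$$
Hence the operator $u\mapsto A(u)+\bar F_1(u)$ on the Gelfand triple $V_1\subset H_1\subset V_1^*$ still satisfies hemicontinuity $({\mathbf{A}}{\mathbf{1}})$, local monotonicity (the Lipschitz perturbation $\bar F_1$ is absorbed into the $(K+\rho(v))\|u-v\|_{H_1}^2$ term of $({\mathbf{A}}{\mathbf{2}})$ up to enlarging $K$), coercivity (from Remark \ref{r1}(i), again absorbing the lower-order $\bar F_1$ term), and the growth bound $({\mathbf{A}}{\mathbf{3}})$. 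For fixed $\phi\in L^2([0,T];U)$ the extra term $G_1(u)\phi_t$ is controlled since $G_1$ is Lipschitz from $H_1$, hence of linear growth, and $\|G_1(u)\phi_t\|_{H_1}\le \|G_1(u)\|_{L_2(U,H_1)}\|\phi_t\|_U$ with $\phi\in L^2$ in time; this is a locally integrable-in-time forcing. Thus \eref{e2} falls within the deterministic version of the variational existence-uniqueness theorem (the analogue of \cite[Theorem 4.2.4]{LR1}, or \cite[Theorem 2.3]{LRSX1} with zero noise), yielding a unique solution $\bar X^\phi\in C([0,T];H_1)\cap L^{\gamma_1}([0,T];V_1)$.

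For the uniform estimate \eref{a1}, I would apply the chain rule (energy equality) to $t\mapsto\|\bar X^\phi_t\|_{H_1}^2$:
$$\|\bar X^\phi_t\|_{H_1}^2=\|x\|_{H_1}^2+\int_0^t\Big(2{}_{V_1^*}\langle A(\bar X^\phi_s),\bar X^\phi_s\rangle_{V_1}+2\langle \bar F_1(\bar X^\phi_s),\bar X^\phi_s\rangle_{H_1}+2\langle G_1(\bar X^\phi_s)\phi_s,\bar X^\phi_s\rangle_{H_1}\Big)ds.$$
Using the coercivity from Remark \ref{r1}(i) one bounds $2{}_{V_1^*}\langle A(u),u\rangle_{V_1}\le -\frac{\theta_1}{2}\|u\|_{V_1}^{\gamma_1}+C(1+\|u\|_{H_1}^2)$ (the $\|G_1(u)\|^2$ term on the left only helps); the $\bar F_1$ term is bounded by $C(1+\|\bar X^\phi_s\|_{H_1}^2)$ via linear growth and Young's inequality; and the control term by
$$2\langle G_1(\bar X^\phi_s)\phi_s,\bar X^\phi_s\rangle_{H_1}\le 2\|G_1(\bar X^\phi_s)\|_{L_2(U,H_1)}\|\phi_s\|_U\|\bar X^\phi_s\|_{H_1}\le C(1+\|\bar X^\phi_s\|_{H_1}^2)\|\phi_s\|_U,$$
again by the linear growth of $G_1$ and Young. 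Collecting terms,
$$\|\bar X^\phi_t\|_{H_1}^2+\frac{\theta_1}{2}\int_0^t\|\bar X^\phi_s\|_{V_1}^{\gamma_1}ds\le \|x\|_{H_1}^2+C\int_0^t\big(1+\|\phi_s\|_U\big)\big(1+\|\bar X^\phi_s\|_{H_1}^2\big)ds.$$
Since for $\phi\in S_M$ we have $\int_0^T\|\phi_s\|_U\,ds\le \sqrt{T}\big(\int_0^T\|\phi_s\|_U^2ds\big)^{1/2}\le \sqrt{TM}$, the function $s\mapsto 1+\|\phi_s\|_U$ is integrable on $[0,T]$ with an $M$-dependent (but $\phi$-uniform) bound, so Gronwall's inequality gives $\sup_{t\in[0,T]}\|\bar X^\phi_t\|_{H_1}^2\le e^{C(T+\sqrt{TM})}(1+\|x\|_{H_1}^2)\le C(1+\|x\|_{H_1}^2)$ with $C$ depending only on $T$, $M$ and the structural constants. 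Feeding this back into the displayed inequality controls $\theta_1\int_0^T\|\bar X^\phi_s\|_{V_1}^{\gamma_1}ds$ by the same quantity, and taking the supremum over $\phi\in S_M$ yields \eref{a1}.

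The main obstacle I anticipate is not the energy estimate itself but making the existence argument fully rigorous in the deterministic variational setting with the $\gamma_1>2$ (porous-medium–type) and $\gamma_1<2$ cases uniformly handled — in particular, justifying the chain rule for $\|\bar X^\phi_t\|_{H_1}^2$ when $\bar X^\phi$ is only known a priori to lie in $L^{\gamma_1}([0,T];V_1)\cap C([0,T];H_1)$, which requires the standard Itô/Lions–Magenes-type lemma adapted to the evolution triple, and checking that the forcing $G_1(\bar X^\phi_s)\phi_s$ is admissible as a $V_1^*$-valued (indeed $H_1$-valued) integrable source despite $\phi$ being only square-integrable in time. These points are, however, routine within the framework of \cite{LR1,LR2}, so I would cite the relevant existence-uniqueness theorem there and concentrate the written proof on the a priori bound \eref{a1}, which is the part actually used later.
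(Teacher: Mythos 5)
Your identification of the Lipschitz continuity of $\bar F_1$ and your energy estimate leading to (\ref{a1}) are correct and essentially identical to the paper's (they Gronwall with the weight $1+\|\phi_t\|_U^2$ rather than your $1+\|\phi_t\|_U$, which makes no difference). The gap is in the existence–uniqueness step. You invoke the variational well-posedness theorem directly for an arbitrary control $\phi\in L^2([0,T];U)$, but the perturbed drift $u\mapsto A(u)+\bar F_1(u)+G_1(u)\phi_t$ does not satisfy the hypotheses of the theorems you cite for such $\phi$: the cross term in the local monotonicity produces $C\|\phi_t\|_U\,\|u-v\|_{H_1}^2$, i.e.\ a time-dependent ``constant'' that is only integrable in $t$, which is not the form of $({\mathbf{A}}{\mathbf{2}})$ assumed in those results; and the growth condition $({\mathbf{A}}{\mathbf{3}})$ for the full drift requires $\|G_1(u)\phi_t\|_{V_1^*}^{\gamma_1/(\gamma_1-1)}$ to be controlled by a quantity integrable in time, which forces $\|\phi_t\|_U^{\gamma_1/(\gamma_1-1)}\in L^1([0,T])$, i.e.\ $\gamma_1\ge 2$ when $\phi$ is merely square integrable — whereas Hypothesis \ref{h1} only assumes $\gamma_1>1$ (and the porous-media/fast-diffusion applications live outside $\gamma_1\ge2$ or close to it). So ``falls within the deterministic version of the variational existence theorem'' is precisely the step that is not available off the shelf, and your closing remark that this is routine defers the one genuine difficulty of the lemma.

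The paper resolves it with a two-step argument that your proposal is missing: first it proves well-posedness of Eq.~(\ref{e2}) for $\phi\in L^\infty([0,T];U)$, where all the $\phi$-dependent terms are bounded in time and the cited theorem applies verbatim; then, for general $\phi\in L^2([0,T];U)$, it approximates by $\phi^n\in L^\infty$ converging strongly in $L^2$, derives from $({\mathbf{A}}{\mathbf{2}})$ and Gronwall a Cauchy estimate for $\bar X^{\phi^n}$ in $C([0,T];H_1)\cap L^{\gamma_1}([0,T];V_1)$ in which the controls enter only through $\int_0^T\|\phi^n_t-\phi^m_t\|_U^2\,dt$ and an exponential factor bounded uniformly on $S_M$ via the a priori estimate, and finally identifies the limit as the solution by the standard monotonicity argument (Zeidler). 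To repair your proof you would need to supply this approximation step (or an equivalent device, such as treating $\bar F_1(u)+G_1(u)\phi_t$ as an $H_1$-valued, time-integrable part of the drift and reproving existence for that extended setting), rather than citing the existing theorems directly.
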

\begin{proof}
We split the proof into the following two steps.

\textbf{Step 1}: In order to prove the well-posedness of Eq.~(\ref{e2}), we first consider $\phi\in L^{\infty}([0,T];U)$ and take
$$\widetilde{A}_t(u):= A(u)+\bar{F}_1(u)+G_1(u)\phi_t.$$
Since $F_1(x,y)$ is Lipschitz continuous with respect to $x$ and $y$, one can show that the map $\bar{F}_1$ is also Lipschitz by Lemma \ref{l4}. Indeed, there is a constant $C>0$ such that
\begin{eqnarray*}
\!\!\!\!\!\!\!\!&&\|\bar{F}_1(u)-\bar{F}_1(v)\|_{H_1}
\nonumber\\
~=\!\!\!\!\!\!\!\!&&\Big\|\int_{H_1}F_1(u,z)\mu^{u}(dz)-\int_{H_1}F_1(v,z)\mu^{v}(dz)\Big\|_{H_1}
\nonumber\\
~\leq\!\!\!\!\!\!\!\!&&\Big\|\int_{H_1}F_1(u,z)\mu^{u}(dz)-\mathbb{E}F_1(u,Y^{u,y}_t)\Big\|_{H_1}+\Big\|\int_{H_1}F_1(v,z)\mu^{u}(dz)-\mathbb{E}F_1(v,Y^{v,y}_t)\Big\|_{H_1}
\nonumber\\
\!\!\!\!\!\!\!\!&&+\Big\|\mathbb{E}F_1(u,Y^{u,y}_t)-\mathbb{E}F_1(v,Y^{v,y}_t)\Big\|_{H_1}
\nonumber\\
~\leq\!\!\!\!\!\!\!\!&&C(1+\|x\|_{H_1}+\|y\|_{H_2})e^{-\frac{\kappa t}{2}}+C\big(\|u-v\|_{H_1}+\mathbb{E}\|Y^{u,y}_t-Y^{v,y}_t\|_{H_2}\big)
\nonumber\\
~\leq\!\!\!\!\!\!\!\!&&C(1+\|x\|_{H_1}+\|y\|_{H_2})e^{-\frac{\kappa t}{2}}+C\|u-v\|_{H_1}.
\end{eqnarray*}
Taking $t\uparrow+\infty$ yields that $\bar{F}_1$ is Lipschitz continuous.

By Hypothesis \ref{h1}, it is easy to check that $\widetilde{A}_t $ satisfies the local monotonicity, coercivity conditions in \cite[Theorem 5.1.3]{LR1} since $\bar{F}_1$ is Lipschitz and $\phi\in L^{\infty}([0,T];U)$. Hence Eq.~(\ref{e2}) admits a unique solution $\bar{X}^{\phi}\in C([0,T];H_1)\cap L^{\gamma_1}([0,T];V_1)$ for any $\phi\in L^{\infty}([0,T];U)$.

\textbf{Step 2}: For any $\phi\in L^2([0,T];U)$, one can choose a sequence
$\phi^n\in L^\infty([0,T];U)$ such that $\phi^n$ strongly converge to $\phi$ in $L^2([0,T];U)$ as $n\to\infty$. Let $\bar{X}^{\phi^n}$ denote the unique solution to Eq.~(\ref{e2}) with $\phi^n\in L^{\infty}([0,T];U)$. According to $({\mathbf{A}}{\mathbf{2}})$, it follows that there exist some constant $C>0$, for any $n,m\in\mathbb{N}$ we have
\begin{eqnarray}\label{1}
\!\!\!\!\!\!\!\!&&\frac{d}{dt}\|\bar{X}^{\phi^n}_t-\bar{X}^{\phi^m}_t\|_{H_1}^2
\nonumber\\
~=\!\!\!\!\!\!\!\!&&2{}_{V_1^*}\langle A(\bar{X}^{\phi^n}_t)-A(\bar{X}^{\phi^m}_t),\bar{X}^{\phi^n}_t-\bar{X}^{\phi^m}_t\rangle_{V_1}
+2\langle \bar{F}_1(\bar{X}^{\phi^n}_t)-\bar{F}_1(\bar{X}^{\phi^m}_t),\bar{X}^{\phi^n}_t-\bar{X}^{\phi^m}_t\rangle_{H_1}
\nonumber\\
\!\!\!\!\!\!\!\!&&+2\langle G_1(\bar{X}^{\phi^n}_t)\phi^n_t-G_1(\bar{X}^{\phi^m}_t)\phi^m_t,\bar{X}^{\phi^n}_t-\bar{X}^{\phi^m}_t\rangle_{H_1}
\nonumber\\
~\leq\!\!\!\!\!\!\!\!&&-\theta_1\|\bar{X}^{\phi^n}_t-\bar{X}^{\phi^m}_t\|_{V_1}^{\gamma_1}+(C+\rho(\bar{X}^{\phi^m}_t))\|\bar{X}^{\phi^n}_t-\bar{X}^{\phi^m}_t\|_{H_1}^2
+\|\phi^n_t\|_U^2\|\bar{X}^{\phi^n}_t-\bar{X}^{\phi^m}_t\|_{H_1}^2
\nonumber\\
\!\!\!\!\!\!\!\!&&+2\langle G_1(\bar{X}^{\phi^m}_t)(\phi^n_t-\phi^m_t),\bar{X}^{\phi^n}_t-\bar{X}^{\phi^m}_t\rangle_{H_1}
\nonumber\\
~\leq\!\!\!\!\!\!\!\!&&-\theta_1\|\bar{X}^{\phi^n}_t-\bar{X}^{\phi^m}_t\|_{V_1}^{\gamma_1}+\|\phi^n_t-\phi^m_t\|_U^2
\nonumber\\
\!\!\!\!\!\!\!\!&&+(C+\rho(\bar{X}^{\phi^m}_t)+\|\phi^n_t\|_U^2+\|G_1(\bar{X}^{\phi^m}_t)\|_{L_2(U,H_1)}^2)\|\bar{X}^{\phi^n}_t-\bar{X}^{\phi^m}_t\|_{H_1}^2.
\end{eqnarray}
Gronwall' lemma yields that
\begin{eqnarray}\label{4}
\!\!\!\!\!\!\!\!&&\|\bar{X}^{\phi^n}_t-\bar{X}^{\phi^m}_t\|_{H_1}^2+\theta_1\int_0^t\|\bar{X}^{\phi^n}_s-\bar{X}^{\phi^m}_s\|_{V_1}^{\gamma_1}ds
\nonumber\\
~\leq\!\!\!\!\!\!\!\!&&\exp\Big\{\int_0^T\Big(C+\rho(\bar{X}^{\phi^m}_t)+\|\phi^n_t\|_U^2+\|G_1(\bar{X}^{\phi^m}_t)\|_{L_2(U,H_1)}^2\Big)dt\Big\}\int_0^T\|\phi^n_t-\phi^m_t\|_U^2dt.~~
\end{eqnarray}
Following the similar calculations as in (\ref{1}) we have
\begin{eqnarray*}
\frac{d}{dt}\|\bar{X}^{\phi^m}_t\|_{H_1}^2\leq\!\!\!\!\!\!\!\!&&2{}_{V_1^*}\langle A(\bar{X}^{\phi^m}_t),\bar{X}^{\phi^m}_t\rangle_{V_1}+2\langle\bar{F}_1(\bar{X}^{\phi^m}_t)+G_1(\bar{X}^{\phi^m}_t)\phi^m_t,\bar{X}^{\phi^m}_t\rangle_{H_1}
\nonumber\\
\leq\!\!\!\!\!\!\!\!&&-\theta_1\|\bar{X}^{\phi^m}_t\|_{V_1}^{\gamma_1}+C(1+\|\phi^m_t\|_U^2)\|\bar{X}^{\phi^m}_t\|_{H_1}^2+C.
\end{eqnarray*}
Let $\phi^m\in S_M$, then  Gronwall' lemma implies that
\begin{eqnarray}\label{2}
\!\!\!\!\!\!\!\!&&\sup_{t\in[0,T]}\|\bar{X}^{\phi^m}_t\|_{H_1}^2+\theta_1\int_0^T\|\bar{X}^{\phi^m}_t\|_{V_1}^{\gamma_1}dt
\nonumber\\
~\leq\!\!\!\!\!\!\!\!&&C\exp\Big\{\int_0^T\Big(1+\|\phi^m_t\|_U^2\Big)dt\Big\}(\|x\|_{H_1}^2+T)\leq C_{T,M}(1+\|x\|_{H_1}^2),
\end{eqnarray}
where constant $C_{T,M}$ only depends on $T$ and $M$.

According to $({\mathbf{A}}{\mathbf{2}})$, it follows that
\begin{eqnarray}\label{3}
\int_0^T\|G_1(\bar{X}^{\phi^m}_t)\|_{L_2(U,H_1)}^2dt\leq C\int_0^T(1+\|\bar{X}^{\phi^m}_t\|_{H_1}^2)dt\leq C_{T,M}(1+\|x\|_{H_1}^2).
\end{eqnarray}
Substituting (\ref{2}) and (\ref{3}) into (\ref{4}) and letting $n\to\infty$, we obtain that $\{\bar{X}^{\phi^n}\}_{n\geq1}$ is a Cauchy net in $C([0,T];H_1)\cap L^{\gamma_1}([0,T];V_1)$, then the limit is denoted by $\bar{X}^{\phi}$. Following the standard monotonicity argument (see e.g.~\cite[Theorem 30.A]{z}) implies that $\bar{X}^{\phi}$ is the solution to Eq.~(\ref{e2}) associated with $\phi$. The uniqueness is a direct consequence of $({\mathbf{A}}{\mathbf{2}})$ and Gronwall's lemma, and the estimate (\ref{a1}) can be concluded by (\ref{2}). Hence, we complete the proof of Lemma \ref{l5}. \hspace{\fill}$\Box$
\end{proof}

\subsection{Stochastic control problem}
We now introduce the following stochastic control problem associated with Eq.~(\ref{e1}),
\begin{eqnarray}\label{e5}
\left\{ \begin{aligned}
&dX^{\epsilon,\alpha,\phi^\epsilon}_t=\big[A(X^{\epsilon,\alpha,\phi^\epsilon}_t)+F_1(X^{\epsilon,\alpha,\phi^\epsilon}_t,Y^{\epsilon,\alpha,\phi^\epsilon}_t)+G_1(X^{\epsilon,\alpha,\phi^\epsilon}_t)\phi^\epsilon_t\big]dt+\sqrt{\epsilon}G_1(X^{\epsilon,\alpha,\phi^\epsilon}_t)dW_t,\\
&dY^{\epsilon,\alpha,\phi^\epsilon}_t=\frac{1}{\alpha}F_2(X^{\epsilon,\alpha,\phi^\epsilon}_t,Y^{\epsilon,\alpha,\phi^\epsilon}_t)dt+\frac{1}{\sqrt{\alpha\epsilon}}G_2\phi^\epsilon_tdt+\frac{1}{\sqrt{\alpha}}G_2dW_t,\\
&X^{\epsilon,\alpha,\phi^\epsilon}_0=x,~Y^{\epsilon,\alpha,\phi^\epsilon}_0=y,
\end{aligned}\right.
\end{eqnarray}
where $\{\phi^\epsilon\}_{\epsilon>0}\subset \mathcal{A}_M$ for some $M<\infty$.

Let us define
$$\Phi(t):=\exp\Big\{-\frac{1}{\sqrt{\epsilon}}\int_0^t\langle\phi^\epsilon_s,dW_s\rangle_U-\frac{1}{2\epsilon}\int_0^t\|\phi^\epsilon_s\|_U^2ds\Big\}.$$
It is easy to verify  the Novikov's condition  for $\Phi(t)$ since $\phi^\epsilon\in\mathcal{A}_M$, therefore $\Phi(t)$ is a martingale. Hence, one can consider the following weighted probability measure on $(\Omega,\mathscr{F},\mathbb{P})$
$$d\widehat{\mathbb{P}}:=\Phi(T)d\mathbb{P}.$$
Thanks to the Girsanov's theorem, the process
\begin{equation}\label{wiener}
W_t^\epsilon:=W_t+\frac{1}{\sqrt{\epsilon}}\int_0^t\phi^\epsilon_sds,~t\in[0,T]
\end{equation}
is a cylindrical Wiener process with respect to the stochastic basis $(\Omega,\mathscr{F},\mathscr{F}_t,\widehat{\mathbb{P}})$. According to the uniqueness of solutions to Eq.~(\ref{e1}) and the Yamada-Watanabe theorem, it is easy to see that
\begin{equation}\label{5}
X^{\epsilon,\alpha,\phi^\epsilon}:=\mathcal{G}^\epsilon\Big(W_{\cdot}+\frac{1}{\sqrt{\epsilon}}\int_0^{\cdot}\phi^\epsilon_sds\Big)
\end{equation}
is the first part of  (pair) solution $(X^{\epsilon,\alpha,\phi^\epsilon},Y^{\epsilon,\alpha,\phi^\epsilon})$ to Eq.~(\ref{e5}) with $W_{\cdot}^\epsilon$  instead of $W_{\cdot}$ on $(\Omega,\mathscr{F},\mathscr{F}_t,\widehat{\mathbb{P}})$. Since the probability measure $\mathbb{P}$ and $\widehat{\mathbb{P}}$ are mutually absolutely continuous, this implies that (\ref{5}) is also the first part of solution $(X^{\epsilon,\alpha,\phi^\epsilon},Y^{\epsilon,\alpha,\phi^\epsilon})$ on $(\Omega,\mathscr{F},\mathscr{F}_t,\mathbb{P})$.

Some energy estimates of $(X^{\epsilon,\alpha,\phi^\epsilon},Y^{\epsilon,\alpha,\phi^\epsilon})$ to the controlled equation (\ref{e5}) are derived as follows.
\begin{lemma}\label{l6}
For each $\phi^\epsilon\in\mathcal{A}_M$, $M\in(0,+\infty)$, there are some constants $C>0$ such that for any $\epsilon,\alpha\in(0,1)$,
\begin{equation}\label{14}
\mathbb{E}\Big[\sup_{t\in[0,T]}\|X^{\epsilon,\alpha,\phi^\epsilon}_t\|_{H_1}^2+2\theta_1\int_0^T\|X^{\epsilon,\alpha,\phi^\epsilon}_t\|_{V_1}^{\gamma_1}dt\Big]\leq C(1+\|x\|_{H_1}^2+\|y\|_{H_2}^2)
\end{equation}
and
\begin{equation}\label{15}
\mathbb{E}\int_0^T\|Y^{\epsilon,\alpha,\phi^\epsilon}_t\|_{H_2}^2dt\leq C(1+\|x\|_{H_1}^2+\|y\|_{H_2}^2).
\end{equation}
\end{lemma}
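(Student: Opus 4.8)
The plan is to run separate It\^o energy estimates for the fast and the slow component and then close a coupled Gronwall argument; write $(X,Y):=(X^{\epsilon,\alpha,\phi^\epsilon},Y^{\epsilon,\alpha,\phi^\epsilon})$ for short. The one structural fact needed beyond Hypotheses \ref{h1}--\ref{h2} is a \emph{strengthened dissipativity} of $F_2$: there exist $c_0,C>0$ with
$${}_{V_2^*}\langle F_2(u,v),v\rangle_{V_2}\le -c_0\|v\|_{H_2}^2-c_0\|v\|_{V_2}^{\gamma_2}+C(1+\|u\|_{H_1}^2),\qquad u\in H_1,\ v\in V_2,$$
which one gets by taking a convex combination of $({\mathbf{H}}{\mathbf{2}})$ with $v_2=0$ and the coercivity $({\mathbf{H}}{\mathbf{3}})$, then absorbing the leftover term $\langle F_2(u,0),v\rangle_{V_2}$ by $({\mathbf{H}}{\mathbf{4}})$ and Young's inequality. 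The $H_2$-dissipation, against the $1/\alpha$ prefactor in the $Y$-equation, is what makes the fast estimate uniform in $\alpha\in(0,1)$.

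\textbf{Fast component.} Apply It\^o's formula to $\|Y_t\|_{H_2}^2$ in the second line of (\ref{e5}). Using the dissipativity above, Young's inequality on the extra control drift, $\frac{2}{\sqrt{\alpha\epsilon}}\langle G_2\phi^\epsilon_t,Y_t\rangle_{H_2}\le \frac{c_0}{\alpha}\|Y_t\|_{H_2}^2+\frac{C}{\epsilon}\|\phi^\epsilon_t\|_U^2$, and taking expectations (the stochastic term is a martingale), one arrives at $\frac{d}{dt}\EE\|Y_t\|_{H_2}^2\le -\frac{c_0}{\alpha}\EE\|Y_t\|_{H_2}^2+\frac{C}{\alpha}(1+\EE\|X_t\|_{H_1}^2)+\frac{C}{\epsilon}\|\phi^\epsilon_t\|_U^2$. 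Multiplying by $e^{c_0t/\alpha}$, integrating, and integrating once more over $t\in[0,T]$ (Fubini) gives
$$\EE\int_0^T\|Y_t\|_{H_2}^2\,dt\le C(1+\|y\|_{H_2}^2)+C\int_0^T(1+\EE\|X_t\|_{H_1}^2)\,dt+\frac{C\alpha}{\epsilon}\,\EE\int_0^T\|\phi^\epsilon_t\|_U^2\,dt.$$
Since $\phi^\epsilon\in\mathcal{A}_M$ the last term is $\le C\alpha M/\epsilon$, which stays bounded because $\alpha/\epsilon$ is bounded under (\ref{h5}); hence (\ref{15}) will follow once $\int_0^T\EE\|X_t\|_{H_1}^2\,dt$ is controlled.

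\textbf{Slow component.} Apply It\^o to $\|X_t\|_{H_1}^2$ in the first line of (\ref{e5}). Remark \ref{r1}(i) (and $\epsilon<1$, so $\epsilon\|G_1(X_t)\|_{L_2}^2\le\|G_1(X_t)\|_{L_2}^2$) bounds $2\,{}_{V_1^*}\langle A(X_t),X_t\rangle_{V_1}+\epsilon\|G_1(X_t)\|_{L_2(U,H_1)}^2$ by $-c\|X_t\|_{V_1}^{\gamma_1}+C(1+\|X_t\|_{H_1}^2)$; the Lipschitz bound on $F_1$ gives $2\langle F_1(X_t,Y_t),X_t\rangle_{H_1}\le C(1+\|X_t\|_{H_1}^2)+\|Y_t\|_{H_2}^2$; and $\|G_1(u)\|_{L_2}^2\le C(1+\|u\|_{H_1}^2)$ with Young gives $2\langle G_1(X_t)\phi^\epsilon_t,X_t\rangle_{H_1}\le C(1+\|X_t\|_{H_1}^2)+\|\phi^\epsilon_t\|_U^2\|X_t\|_{H_1}^2$. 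Taking expectations first \emph{without} a supremum, inserting the fast bound $\int_0^t\EE\|Y_s\|_{H_2}^2\,ds\le C(1+\|y\|_{H_2}^2+\alpha M/\epsilon)+C\int_0^t\EE\|X_s\|_{H_1}^2\,ds$, and applying Gronwall's inequality — legitimate because $\int_0^T\|\phi^\epsilon_t\|_U^2\,dt\le M$ $\PP$-a.s., so after the usual stopping-time localization the exponential factor is $\le e^{CT+M}$ — yields $\sup_{t\in[0,T]}\EE\|X_t\|_{H_1}^2+c\,\EE\int_0^T\|X_t\|_{V_1}^{\gamma_1}\,dt\le C(1+\|x\|_{H_1}^2+\|y\|_{H_2}^2)$, and substituting this back into the fast estimate proves (\ref{15}). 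To obtain (\ref{14}) itself one repeats the $\|X_t\|_{H_1}^2$ computation taking $\sup_{s\le t}$ before the expectation and controlling the stochastic term by Burkholder--Davis--Gundy, $\EE\sup_{s\le t}\big|2\sqrt\epsilon\int_0^s\langle X_r,G_1(X_r)\,dW_r\rangle_{H_1}\big|\le \frac12\EE\sup_{s\le t}\|X_s\|_{H_1}^2+C\epsilon\,\EE\int_0^t(1+\|X_r\|_{H_1}^2)\,dr$, absorbing $\frac12\EE\sup$ on the left; all remaining right-hand terms are time-integrals of $\EE\|X\|_{H_1}^2$ and $\EE\|Y\|_{H_2}^2$ already bounded above.

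The step I expect to be the real obstacle is the treatment of the extra control drift $\frac{1}{\sqrt{\alpha\epsilon}}G_2\phi^\epsilon_t$ in the fast equation, which has no analogue in the unperturbed system nor in the mild-solution approach of \cite{HSS,SWXY,WRD}: splitting it by Young's inequality unavoidably produces the ratio $\alpha/\epsilon$, so uniformity of the constant genuinely rests on this ratio being bounded, which is precisely where (\ref{h5}) is used. A secondary, bookkeeping-type difficulty is the two-sided coupling — the $Y$-bound needs $\int_0^T\EE\|X_t\|_{H_1}^2\,dt$ while the $X$-bound needs $\int_0^T\EE\|Y_t\|_{H_2}^2\,dt$ — so one must close the loop first at the level of the weak $L^2$-in-time norms (plugging the $Y$-estimate into the $X$-differential inequality \emph{before} invoking Gronwall) and only afterwards upgrade to the $\sup_t$ bound via BDG; the randomness of $\phi^\epsilon$ in the Gronwall exponent is handled by the a.s.\ bound $\int_0^T\|\phi^\epsilon_t\|_U^2\,dt\le M$.
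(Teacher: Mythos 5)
Your overall strategy is the same as the paper's: an It\^o estimate for $\|Y^{\epsilon,\alpha,\phi^\epsilon}_t\|_{H_2}^2$ using the strict dissipativity of $F_2$ (the paper simply quotes \cite{LR1}; your convex-combination derivation from $({\mathbf{H}}{\mathbf{2}})$, $({\mathbf{H}}{\mathbf{3}})$, $({\mathbf{H}}{\mathbf{4}})$ is a correct substitute), Young's inequality on the drift $\frac{1}{\sqrt{\alpha\epsilon}}G_2\phi^\epsilon_t$ which is exactly where the ratio $\alpha/\epsilon$ enters, the comparison theorem plus Fubini, and then an It\^o estimate for $\|X^{\epsilon,\alpha,\phi^\epsilon}_t\|_{H_1}^2$ into which the $Y$-bound is substituted, closed by Burkholder--Davis--Gundy and Gronwall; the way you invoke (\ref{h5}) (only boundedness of $\alpha/\epsilon$ is needed, since that term is additive) is also how the paper uses it.

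There is, however, a gap in your slow-component Gronwall step as written. Having bounded the control term by $C(1+\|X_t\|_{H_1}^2)+\|\phi^\epsilon_t\|_U^2\|X_t\|_{H_1}^2$, you propose to ``take expectations first without a supremum'' and then apply Gronwall with a deterministic exponential factor $e^{CT+M}$. After taking expectations the offending term is $\EE\big[\|\phi^\epsilon_t\|_U^2\|X_t\|_{H_1}^2\big]$, which is \emph{not} dominated by $g(t)\,\EE\|X_t\|_{H_1}^2$ for any deterministic $g$, because $\phi^\epsilon$ is a random adapted process correlated with $X$; the a.s.\ bound $\int_0^T\|\phi^\epsilon_t\|_U^2dt\le M$ cannot be pulled out of the expectation in the order of operations you describe. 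To make your route rigorous you must keep the inequality pathwise before taking expectations, e.g.\ multiply by the bounded adapted weight $\exp\{-\int_0^t(C+\|\phi^\epsilon_s\|_U^2)ds\}\ge e^{-CT-M}$ and localize the stochastic integral by stopping times so that the weighted martingale term has zero mean. The same issue reappears in your final sup-step: with your bound, the remaining control contribution is $\EE\int_0^t\|\phi^\epsilon_s\|_U^2\|X_s\|_{H_1}^2ds$, which is not ``a time-integral of $\EE\|X\|_{H_1}^2$'', and estimating it by $M\,\EE\sup_{s\le t}\|X_s\|_{H_1}^2$ is not absorbable when $M$ is large. The paper's device (\ref{12}) fixes both points at once: bound $2\int_0^t|\langle G_1(X_s)\phi^\epsilon_s,X_s\rangle_{H_1}|ds$ by Cauchy--Schwarz against $\sup_{s\le t}\|X_s\|_{H_1}$, use $\int_0^T\|\phi^\epsilon_s\|_U^2ds\le M$ a.s.\ together with $({\mathbf{A}}{\mathbf{2}})$ to get $\frac14\sup_{s\le t}\|X_s\|_{H_1}^2+C_{M,T}+C_M\int_0^t\|X_s\|_{H_1}^2ds$, and absorb the $\frac14\,\EE\sup$ on the left exactly as you already do for the BDG term; then a single Gronwall with deterministic coefficients closes the coupled estimate in one pass, and your argument coincides with the paper's proof.
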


\begin{proof}
Applying It\^{o}'s formula to $\|Y^{\epsilon,\alpha,\phi^\epsilon}_t\|_{H_2}^2$ gives that
\begin{eqnarray*}
\!\!\!\!\!\!\!\!&&\|Y^{\epsilon,\alpha,\phi^\epsilon}_t\|_{H_2}^2
\nonumber\\
~=\!\!\!\!\!\!\!\!&&\|y\|_{H_2}^2+\frac{2}{\alpha}\int_0^t{}_{V_2^*}\langle F_2(X^{\epsilon,\alpha,\phi^\epsilon}_s,Y^{\epsilon,\alpha,\phi^\epsilon}_s),Y^{\epsilon,\alpha,\phi^\epsilon}_s\rangle_{V_2}ds+\frac{1}{\alpha}\int_0^t \|G_2\|^2_{L_2(U,H_2)}ds
\nonumber\\
\!\!\!\!\!\!\!\!&&+\frac{2}{\sqrt{\alpha\epsilon}}\int_0^t\langle G_2\phi^\epsilon_s,Y^{\epsilon,\alpha,\phi^\epsilon}_s\rangle_{H_2}ds
+\frac{2}{\sqrt{\alpha}}\int_0^t\langle G_2dW_s,Y^{\epsilon,\alpha,\phi^\epsilon}_s\rangle_{H_2}.
\end{eqnarray*}

Then taking expectation  we have
\begin{eqnarray}\label{6}
\!\!\!\!\!\!\!\!&&\frac{d}{dt}\mathbb{E}\|Y^{\epsilon,\alpha,\phi^\epsilon}_t\|_{H_2}^2
\nonumber\\
~=\!\!\!\!\!\!\!\!&&\frac{2}{\alpha}\mathbb{E}\Big[{}_{V_2^*}\langle F_2(X^{\epsilon,\alpha,\phi^\epsilon}_t,Y^{\epsilon,\alpha,\phi^\epsilon}_t),Y^{\epsilon,\alpha,\phi^\epsilon}_t\rangle_{V_2}\Big]+\frac{1}{\alpha} \|G_2\|^2_{L_2(U,H_2)}
+\frac{2}{\sqrt{\alpha\epsilon}}\mathbb{E}\Big[\langle G_2\phi^\epsilon_t,Y^{\epsilon,\alpha,\phi^\epsilon}_t\rangle_{H_2}\Big].~~~~
\end{eqnarray}
Following the similar arguments as in the proof of \cite[Lemma 4.3.8]{LR1} via Hypothesis \ref{h2}, there is a constant $\eta\in(0,\theta_2)$,
$$2{}_{V_2^*}\langle F_2(u,v),v\rangle_{V_2}\leq-\eta\|v\|_{H_2}^2+C(1+\|u\|_{H_1}^2).$$
Hence the first term of the right hand side of (\ref{6}) is controlled by
\begin{eqnarray}\label{7}
\frac{1}{\alpha}\mathbb{E}\Big[2{}_{V_2^*}\langle F_2(X^{\epsilon,\alpha,\phi^\epsilon}_t,Y^{\epsilon,\alpha,\phi^\epsilon}_t),Y^{\epsilon,\alpha,\phi^\epsilon}_t\rangle_{V_2}\Big]
\leq-\frac{\eta}{\alpha}\mathbb{E}\|Y^{\epsilon,\alpha,\phi^\epsilon}_t\|_{H_2}^2+\frac{C}{\alpha}(1+\mathbb{E}\|X^{\epsilon,\alpha,\phi^\epsilon}_t\|_{H_1}^2).~~~~
\end{eqnarray}
The last term of the right hand side of (\ref{6}) can be estimated by
\begin{eqnarray}\label{8}
\frac{2}{\sqrt{\alpha\epsilon}}\mathbb{E}\Big[\langle G_2\phi^\epsilon_t,Y^{\epsilon,\alpha,\phi^\epsilon}_t\rangle_{H_2}\Big]
\leq\!\!\!\!\!\!\!\!&&\frac{2}{\sqrt{\alpha\epsilon}}\mathbb{E}\Big[\|G_2\|_{L(U,H_2)}\|\phi^\epsilon_t\|_U\|Y^{\epsilon,\alpha,\phi^\epsilon}_t\|_{H_2}\Big]
\nonumber\\
\leq\!\!\!\!\!\!\!\!&&\frac{C}{\epsilon}\mathbb{E}\Big[\|\phi^\epsilon_t\|_U^2\Big]+\frac{\varepsilon_0}{\alpha}\mathbb{E}\|Y^{\epsilon,\alpha,\phi^\epsilon}_t\|_{H_2}^2,
\end{eqnarray}
where $\varepsilon_0\in(0,\eta)$, and we used Young's inequality in the last one.

Substituting $(\ref{7})$ and $(\ref{8})$ into $(\ref{6})$ yields that
\begin{eqnarray*}
\frac{d}{dt}\mathbb{E}\|Y^{\epsilon,\alpha,\phi^\epsilon}_t\|_{H_2}^2\leq-\frac{C_{\eta}}{\alpha}\mathbb{E}\|Y^{\epsilon,\alpha,\phi^\epsilon}_t\|_{H_2}^2
+\frac{C}{\alpha}(1+\mathbb{E}\|X^{\epsilon,\alpha,\phi^\epsilon}_t\|_{H_1}^2)+\frac{C}{\epsilon}\mathbb{E}\Big[\|\phi^\epsilon_t\|_U^2\Big].
\end{eqnarray*}
The comparison theorem implies that
\begin{eqnarray}\label{9}
\mathbb{E}\|Y^{\epsilon,\alpha,\phi^\epsilon}_t\|_{H_2}^2\leq\!\!\!\!\!\!\!\!&&e^{-\frac{C_\eta}{\alpha}t}\|y\|_{H_2}^2+\frac{C}{\alpha}\int_0^te^{-\frac{C_\eta}{\alpha}(t-s)}(1+\mathbb{E}\|X^{\epsilon,\alpha,\phi^\epsilon}_s\|_{H_1}^2)ds
\nonumber\\
\!\!\!\!\!\!\!\!&&+\frac{C}{\epsilon}\int_0^te^{-\frac{C_\eta}{\alpha}(t-s)}\mathbb{E}\Big[\|\phi^\epsilon_s\|_U^2\Big]ds.
\end{eqnarray}
Integrating (\ref{9}) with respect to $t$ from $0$ to $T$ and using Fubini's theorem we get
\begin{eqnarray}\label{10}
\mathbb{E}\Big[\int_0^T\|Y^{\epsilon,\alpha,\phi^\epsilon}_t\|_{H_2}^2dt\Big]\leq\!\!\!\!\!\!\!\!&&\|y\|_{H_2}^2\int_0^Te^{-\frac{C_\eta}{\alpha}t}dt+\frac{C}{\alpha}\int_0^T\int_0^te^{-\frac{C_\eta}{\alpha}(t-s)}(1+\mathbb{E}\|X^{\epsilon,\alpha,\phi^\epsilon}_s\|_{H_1}^2)dsdt
\nonumber\\
\!\!\!\!\!\!\!\!&&+\frac{C}{\epsilon}\mathbb{E}\Big[\int_0^T\int_0^te^{-\frac{C_\eta}{\alpha}(t-s)}\|\phi^\epsilon_s\|_U^2dsdt\Big]
\nonumber\\
\leq\!\!\!\!\!\!\!\!&&\frac{\alpha}{C_\eta}\|y\|_{H_2}^2+C_\eta\mathbb{E}\Big[\int_0^T(1+\|X^{\epsilon,\alpha,\phi^\epsilon}_t\|_{H_1}^2)dt\Big]
\nonumber\\
\!\!\!\!\!\!\!\!&&+C_\eta\Big(\frac{\alpha}{\epsilon}\Big)\mathbb{E}\Big[\int_0^T\|\phi^\epsilon_t\|_U^2dt\Big]
\nonumber\\
\leq\!\!\!\!\!\!\!\!&&C_{\alpha,\eta,T}(1+\|y\|_{H_2}^2)+C_\eta\mathbb{E}\int_0^T\|X^{\epsilon,\alpha,\phi^\epsilon}_t\|_{H_1}^2dt
+C_{\eta,M}\Big(\frac{\alpha}{\epsilon}\Big).
\end{eqnarray}
Now we aim to estimate $\|X^{\epsilon,\alpha,\phi^\epsilon}_t\|_{H_1}^2$. First, according to It\^{o}'s formula
\begin{eqnarray*}
\!\!\!\!\!\!\!\!&&\|X^{\epsilon,\alpha,\phi^\epsilon}_t\|_{H_1}^2
\nonumber\\
~=\!\!\!\!\!\!\!\!&&\|x\|_{H_1}^2+2\int_0^t{}_{V_1^*}\langle A(X^{\epsilon,\alpha,\phi^\epsilon}_s),X^{\epsilon,\alpha,\phi^\epsilon}_s\rangle_{V_1}ds+\epsilon\int_0^t\|G_1(X^{\epsilon,\alpha,\phi^\epsilon}_s)\|_{L_2(U,H_1)}^2ds
\nonumber\\
\!\!\!\!\!\!\!\!&&+2\int_0^t\Big[\langle F_1(X^{\epsilon,\alpha,\phi^\epsilon}_s,Y^{\epsilon,\alpha,\phi^\epsilon}_s)+G_1(X^{\epsilon,\alpha,\phi^\epsilon}_s)\phi^\epsilon_s,X^{\epsilon,\alpha,\phi^\epsilon}_s\rangle_{H_1}\Big]ds
\nonumber\\
\!\!\!\!\!\!\!\!&&+2\sqrt{\epsilon}\int_0^t\langle G_1(X^{\epsilon,\alpha,\phi^\epsilon}_s)dW_s,X^{\epsilon,\alpha,\phi^\epsilon}_s\rangle_{H_1}.
\end{eqnarray*}
Following from Remark \ref{r1} and $({\mathbf{A}}{\mathbf{2}})$ that
\begin{eqnarray}\label{11}
\!\!\!\!\!\!\!\!&&\mathbb{E}\Big[\sup_{s\in[0,t]}\|X^{\epsilon,\alpha,\phi^\epsilon}_s\|_{H_1}^2\Big]+\theta_1\mathbb{E}\int_0^t\|X^{\epsilon,\alpha,\phi^\epsilon}_s\|_{V_1}^{\gamma_1}ds
\nonumber\\
~\leq\!\!\!\!\!\!\!\!&&\|x\|_{H_1}^2+CT+C\mathbb{E}\int_0^t\|X^{\epsilon,\alpha,\phi^\epsilon}_s\|_{H_1}^2ds+C\mathbb{E}\int_0^t\|Y^{\epsilon,\alpha,\phi^\epsilon}_s\|_{H_2}^2ds
\nonumber\\
\!\!\!\!\!\!\!\!&&+2\mathbb{E}\int_0^t|\langle G_1(X^{\epsilon,\alpha,\phi^\epsilon}_s)\phi^\epsilon_s,X^{\epsilon,\alpha,\phi^\epsilon}_s\rangle_{H_1}|ds+2\sqrt{\epsilon}\mathbb{E}\Big[\sup_{s\in[0,t]}\Big|\int_0^s\langle G_1(X^{\epsilon,\alpha,\phi^\epsilon}_r)dW_r,X^{\epsilon,\alpha,\phi^\epsilon}_r\rangle_{H_1}\Big|\Big].
\nonumber\\
\end{eqnarray}
Using Cauchy-Schwarz's inequality, H\"{o}lder's inequality and Young's inequality we have
\begin{eqnarray}\label{12}
\!\!\!\!\!\!\!\!&&2\mathbb{E}\int_0^t|\langle G_1(X^{\epsilon,\alpha,\phi^\epsilon}_s)\phi^\epsilon_s,X^{\epsilon,\alpha,\phi^\epsilon}_s\rangle_{H_1}|ds
\nonumber\\
~\leq\!\!\!\!\!\!\!\!&&2\mathbb{E}\Big[\sup_{s\in[0,t]}\|X^{\epsilon,\alpha,\phi^\epsilon}_s\|_{H_1}\int_0^t\|G_1(X^{\epsilon,\alpha,\phi^\epsilon}_s)\|_{L_2(U,H_1)}\|\phi^\epsilon_s\|_Uds\Big]
\nonumber\\
~\leq\!\!\!\!\!\!\!\!&&\frac{1}{4}\mathbb{E}\Big[\sup_{s\in[0,t]}\|X^{\epsilon,\alpha,\phi^\epsilon}_s\|_{H_1}^2\Big]+4\mathbb{E}\Big(\int_0^t\|G_1(X^{\epsilon,\alpha,\phi^\epsilon}_s)\|_{L_2(U,H_1)}\|\phi^\epsilon_s\|_Uds\Big)^2
\nonumber\\
~\leq\!\!\!\!\!\!\!\!&&\frac{1}{4}\mathbb{E}\Big[\sup_{s\in[0,t]}\|X^{\epsilon,\alpha,\phi^\epsilon}_s\|_{H_1}^2\Big]+4\mathbb{E}\Big[\Big(\int_0^t\|G_1(X^{\epsilon,\alpha,\phi^\epsilon}_s)\|_{L_2(U,H_1)}^2ds\Big)\Big(\int_0^T\|\phi^\epsilon_s\|_U^2ds\Big)\Big]
\nonumber\\
~\leq\!\!\!\!\!\!\!\!&&\frac{1}{4}\mathbb{E}\Big[\sup_{s\in[0,t]}\|X^{\epsilon,\alpha,\phi^\epsilon}_s\|_{H_1}^2\Big]+C_{M,T}+C_M\mathbb{E}\int_0^t\|X^{\epsilon,\alpha,\phi^\epsilon}_s\|_{H_1}^2ds,
\end{eqnarray}
where the last step is due to $\phi^\epsilon\in\mathcal{A}_M$.

From Burkholder-Davis-Gundy's inequality, the last term of right hand side of (\ref{11}) is estimated by
\begin{eqnarray}\label{13}
\!\!\!\!\!\!\!\!&&2\sqrt{\epsilon}\mathbb{E}\Big[\sup_{s\in[0,t]}\Big|\int_0^s\langle G_1(X^{\epsilon,\alpha,\phi^\epsilon}_r)dW_r,X^{\epsilon,\alpha,\phi^\epsilon}_r\rangle_{H_1}\Big|\Big]
\nonumber\\
~\leq\!\!\!\!\!\!\!\!&&8\sqrt{\epsilon}\mathbb{E}\Big[\int_0^t\|G_1(X^{\epsilon,\alpha,\phi^\epsilon}_s)\|_{L_2(U,H_1)}^2\|X^{\epsilon,\alpha,\phi^\epsilon}_s\|_{H_1}^2ds\Big]^{\frac{1}{2}}
\nonumber\\
~\leq\!\!\!\!\!\!\!\!&&8\sqrt{\epsilon}\mathbb{E}\Big[\sup_{s\in[0,t]}\|X^{\epsilon,\alpha,\phi^\epsilon}_s\|_{H_1}^2\int_0^t\|G_1(X^{\epsilon,\alpha,\phi^\epsilon}_s)\|_{L_2(U,H_1)}^2\Big]^{\frac{1}{2}}
\nonumber\\
~\leq\!\!\!\!\!\!\!\!&&\frac{1}{4}\mathbb{E}\Big[\sup_{s\in[0,t]}\|X^{\epsilon,\alpha,\phi^\epsilon}_s\|_{H_1}^2\Big]+C_\epsilon\mathbb{E}\int_0^t\|X^{\epsilon,\alpha,\phi^\epsilon}_s\|_{H_1}^2ds+C_{\epsilon,T}.
\end{eqnarray}
Substituting (\ref{12}) and (\ref{13}) into (\ref{11}) and recalling (\ref{10}), we infer that
\begin{eqnarray*}
\!\!\!\!\!\!\!\!&&\mathbb{E}\Big[\sup_{s\in[0,t]}\|X^{\epsilon,\alpha,\phi^\epsilon}_s\|_{H_1}^2\Big]+2\theta_1\mathbb{E}\int_0^t\|X^{\epsilon,\alpha,\phi^\epsilon}_s\|_{V_1}^{\gamma_1}ds
\nonumber\\
~\leq\!\!\!\!\!\!\!\!&&2\|x\|_{H_1}^2+C_{M,\epsilon,T}+C_{M,\epsilon,T}\mathbb{E}\int_0^t\|X^{\epsilon,\alpha,\phi^\epsilon}_s\|_{H_1}^2ds+C\mathbb{E}\int_0^t\|Y^{\epsilon,\alpha,\phi^\epsilon}_s\|_{H_2}^2ds
\nonumber\\
~\leq\!\!\!\!\!\!\!\!&&2\|x\|_{H_1}^2+C_{\alpha,\eta,T}(1+\|y\|_{H_2}^2)+C_{M,\epsilon,T}+C_{M,\epsilon,T,\eta}\mathbb{E}\int_0^t\|X^{\epsilon,\alpha,\phi^\epsilon}_s\|_{H_1}^2ds
+C_{M,\eta}\Big(\frac{\alpha}{\epsilon}\Big).
\end{eqnarray*}
Owing to the condition (\ref{h5}), one can take $\frac{\alpha}{\epsilon}<\frac{1}{C_{M,\eta}}$, then the Gronwall's lemma yields that
\begin{eqnarray}\label{16}
\mathbb{E}\Big[\sup_{s\in[0,t]}\|X^{\epsilon,\alpha,\phi^\epsilon}_s\|_{H_1}^2\Big]+2\theta_1\mathbb{E}\int_0^t\|X^{\epsilon,\alpha,\phi^\epsilon}_s\|_{V_1}^{\gamma_1}ds
\leq C_{M,\epsilon,T,\eta}\Big[1+\|x\|_{H_1}^2+\|y\|_{H_2}^2\Big],
\end{eqnarray}
which gives the estimate (\ref{14}).

Moreover, it is easy to get the estimate (\ref{15}) by substituting (\ref{16}) into (\ref{10}). The proof of this lemma is completed. \hspace{\fill}$\Box$
\end{proof}

Now we would like to formulate a technical lemma, which investigates the time
increments of solution to the controlled equation (\ref{e5}).  We first define the following stopping time
$$\tau_N^\epsilon:=\inf\Big\{t\in[0,T]:\|X^{\epsilon,\alpha,\phi^\epsilon}_t\|_{H_1}>N\Big\},   N>0.  $$

\begin{lemma}\label{l7}
For $x\in H_1$, $y\in H_2$, $T>0$, and $\epsilon,\delta\in(0,1)$ are small enough constants, there exists some constant $C_{N}>0$ depending on $N$ such that
$$\mathbb{E}\Big[\int_0^{T\wedge\tau_N^\epsilon}\|X^{\epsilon,\alpha,\phi^\epsilon}_t-X^{\epsilon,\alpha,\phi^\epsilon}_{t(\delta)}\|_{H_1}^2dt\Big]\leq C_{N}(1+\|x\|_{H_1}^2+\|y\|_{H_2}^2)\delta^{\frac{1}{2}},$$
here $t(\delta):=[\frac{t}{\delta}]\delta$ and $[s]$ is the largest integer smaller than $s$.
\end{lemma}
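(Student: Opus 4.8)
The plan is to write the increment in variational (integral) form over the mesh cell $[t(\delta),t]$, apply It\^{o}'s formula to its squared $H_1$-norm, and estimate the drift and martingale parts separately, using throughout the a priori bounds of Lemma \ref{l6} and the stopping time $\tau_N^\epsilon$. The role of $\tau_N^\epsilon$ is that for $s\le t\wedge\tau_N^\epsilon$ one has $\|X^{\epsilon,\alpha,\phi^\epsilon}_s\|_{H_1}\le N$; combined with the growth condition $({\mathbf{A}}{\mathbf{3}})$ and the Lipschitz bounds on $F_1,G_1$ this yields $\|A(X^{\epsilon,\alpha,\phi^\epsilon}_s)\|_{V_1^*}\le C_N(1+\|X^{\epsilon,\alpha,\phi^\epsilon}_s\|_{V_1}^{\gamma_1-1})$, $\|G_1(X^{\epsilon,\alpha,\phi^\epsilon}_s)\|_{L_2(U,H_1)}\le C_N$, $\|F_1(X^{\epsilon,\alpha,\phi^\epsilon}_s,Y^{\epsilon,\alpha,\phi^\epsilon}_s)\|_{H_1}\le C_N+C\|Y^{\epsilon,\alpha,\phi^\epsilon}_s\|_{H_2}$ and $\|X^{\epsilon,\alpha,\phi^\epsilon}_s-X^{\epsilon,\alpha,\phi^\epsilon}_{t(\delta)}\|_{H_1}\le 2N$.

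Abbreviating $X:=X^{\epsilon,\alpha,\phi^\epsilon}$, $Y:=Y^{\epsilon,\alpha,\phi^\epsilon}$ and $Z_t:=X_t-X_{t(\delta)}$, It\^{o}'s formula on $[t(\delta),t]$ (with $X_{t(\delta)}$ as initial value) gives $\|Z_t\|_{H_1}^2=R_t+M_t$ with
\begin{align*}
R_t&:=\int_{t(\delta)}^t\Big[2{}_{V_1^*}\langle A(X_s),Z_s\rangle_{V_1}+2\langle F_1(X_s,Y_s)+G_1(X_s)\phi^\epsilon_s,Z_s\rangle_{H_1}+\epsilon\|G_1(X_s)\|_{L_2(U,H_1)}^2\Big]ds,\\
M_t&:=2\sqrt\epsilon\int_{t(\delta)}^t\langle G_1(X_s)\,dW_s,Z_s\rangle_{H_1}.
\end{align*}
In $R_t$, for the $A$-term I would write $\langle A(X_s),Z_s\rangle=\langle A(X_s),X_s\rangle-\langle A(X_s),X_{t(\delta)}\rangle$, bound the first part from above by the coercivity estimate of Remark \ref{r1} (discarding the negative $\|X_s\|_{V_1}^{\gamma_1}$ term) and the second by $\|A(X_s)\|_{V_1^*}\|X_{t(\delta)}\|_{V_1}$, then apply $({\mathbf{A}}{\mathbf{3}})$ and Young's inequality (exponents $\tfrac{\gamma_1}{\gamma_1-1}$, $\gamma_1$) to reduce this to multiples of $1+\|X_s\|_{V_1}^{\gamma_1}$. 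The $F_1$- and $G_1\phi^\epsilon$-terms are bounded in $H_1$ via $2\langle v,Z_s\rangle_{H_1}\le 4N\|v\|_{H_1}$ (valid since $\|Z_s\|_{H_1}\le 2N$) and Cauchy--Schwarz in $s$, and the It\^{o} correction is $\le\epsilon C_N\delta$. Integrating $R_t$ over $t\in[0,T\wedge\tau_N^\epsilon]$, taking expectations, exchanging the $ds$- and $dt$-integrations (for fixed $s$ the admissible $t$-range has length $\le\delta$), and inserting the bounds $\mathbb{E}\int_0^T\|X_s\|_{V_1}^{\gamma_1}ds$, $\mathbb{E}\int_0^T\|Y_s\|_{H_2}^2ds\le C(1+\|x\|_{H_1}^2+\|y\|_{H_2}^2)$ and $\mathbb{E}\int_0^T\|\phi^\epsilon_s\|_U^2ds\le M$ from Lemma \ref{l6} gives $\mathbb{E}\int_0^{T\wedge\tau_N^\epsilon}R_t\,dt\le C_N(1+\|x\|_{H_1}^2+\|y\|_{H_2}^2)\delta$.

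For the martingale part, $\big|\mathbb{E}\int_0^{T\wedge\tau_N^\epsilon}M_t\,dt\big|\le\mathbb{E}\int_0^T\mathbf{1}_{\{t\le\tau_N^\epsilon\}}|M_t|\,dt$; after stopping the stochastic integral at $\tau_N^\epsilon$, Cauchy--Schwarz and the It\^{o} isometry give, for each $t$, $\mathbb{E}[\mathbf{1}_{\{t\le\tau_N^\epsilon\}}|M_t|]\le 2\sqrt\epsilon\big(\mathbb{E}\int_{t(\delta)}^{t\wedge\tau_N^\epsilon}\|G_1(X_s)\|_{L_2(U,H_1)}^2\|Z_s\|_{H_1}^2\,ds\big)^{1/2}\le C_N(\epsilon\delta)^{1/2}$, whence $\mathbb{E}\int_0^{T\wedge\tau_N^\epsilon}M_t\,dt\le C_N\delta^{1/2}$. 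Because only $|M_t|$ (not $M_t^2$) enters the $dt$-integral, the quadratic-variation bound of order $\delta$ is not squared back, so the exponent $\tfrac12$ is unavoidable here and is precisely the one in the claim; it dominates the $\delta^1$ coming from $R_t$. Adding the two estimates gives the assertion with $C_N=C_{N,T,M}$.

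The step I expect to be the real obstacle is the $A$-term, since $A$ takes values in $V_1^*$ and not in $H_1$: its contribution to $\|Z_t\|_{H_1}^2$ cannot be controlled by a crude $H_1$-estimate of $\int_{t(\delta)}^t A(X_s)\,ds$, and one must instead keep it inside the $V_1^*$--$V_1$ duality pairing against $Z_s$, reduce it via $({\mathbf{A}}{\mathbf{3}})$ and the stopping time to the integrable quantity $\|X_s\|_{V_1}^{\gamma_1}$, and carefully verify that the residual $V_1$-norm factors at the mesh points are absorbed (using the mesh length $\le\delta$ and, if needed, a $\delta$-dependent Young parameter) so that a positive power of $\delta$ survives. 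Once this bookkeeping is arranged, the remaining estimates are routine applications of Young's, H\"{o}lder's, Cauchy--Schwarz's and Burkholder--Davis--Gundy's inequalities together with Lemma \ref{l6}.
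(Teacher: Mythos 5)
Your overall skeleton (It\^o's formula on the mesh cell, the stopping time to localize $H_1$-norms, Fubini over the cell of length $\le\delta$, Burkholder--Davis--Gundy for the martingale part giving the $\delta^{1/2}$ rate) is the same as the paper's, but your treatment of the $A$-term --- precisely the step you flag as the obstacle --- has a genuine gap that your proposed bookkeeping does not close. You anchor It\^o's formula at the mesh point and split ${}_{V_1^*}\langle A(X_s),Z_s\rangle_{V_1}=\langle A(X_s),X_s\rangle-\langle A(X_s),X_{t(\delta)}\rangle$, bounding the second piece by $\|A(X_s)\|_{V_1^*}\|X_{t(\delta)}\|_{V_1}$ and Young's inequality. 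But $X_{t(\delta)}$ is the solution evaluated at a fixed deterministic time: the solution is a $V_1$-valued function only for $dt$-a.e.\ $t$, so $X_{k\delta}$ need not lie in $V_1$ at all (already the duality pairing ${}_{V_1^*}\langle A(X_s),X_s-X_{t(\delta)}\rangle_{V_1}$ in your $R_t$ is then undefined), and the stopping time $\tau_N^\epsilon$ controls only $H_1$-norms. Even granting $X_{k\delta}\in V_1$, Young's inequality leaves you with $\|X_{t(\delta)}\|_{V_1}^{\gamma_1}$, and integrating in $t$ produces $\delta\sum_k\|X_{k\delta}\|_{V_1}^{\gamma_1}$, a Riemann sum at fixed points that is \emph{not} dominated by $\int_0^T\|X_t\|_{V_1}^{\gamma_1}dt$, which is all that Lemma \ref{l6} provides; a $\delta$-dependent Young parameter does not help, because the offending factor is not small in $\delta$. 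In particular your claim that the cross term reduces to ``multiples of $1+\|X_s\|_{V_1}^{\gamma_1}$'' is not correct.

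The paper's proof avoids anchoring at the mesh point altogether: it first treats $[0,\delta]$ separately (there $X_{t(\delta)}=x$) and on $[\delta,T]$ uses the triangle inequality $\|X_t-X_{t(\delta)}\|_{H_1}^2\le 2\|X_t-X_{t-\delta}\|_{H_1}^2+2\|X_{t(\delta)}-X_{t-\delta}\|_{H_1}^2$, then applies It\^o's formula on $[t-\delta,t]$ (resp.\ $[t-\delta,t(\delta)]$) with the \emph{running} shift $X_{t-\delta}$ as anchor. Since $X_{t-\delta}\in V_1$ for a.e.\ $t$, the $A$-term can be kept as $\|A(X_s)\|_{V_1^*}\|X_s-X_{t-\delta}\|_{V_1}$ and estimated by H\"older with exponents $\tfrac{\gamma_1}{\gamma_1-1},\gamma_1$ together with $({\mathbf{A}}{\mathbf{3}})$; after Fubini the change of variable $u=t-\delta$ turns $\int_\delta^T\|X_{t-\delta}\|_{V_1}^{\gamma_1}dt$ into $\int_0^{T-\delta}\|X_u\|_{V_1}^{\gamma_1}du$, exactly the quantity controlled by Lemma \ref{l6} and the stopping time. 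If you insert this preliminary splitting, your remaining estimates for the $F_1$-, $G_1\phi^\epsilon$-, It\^o-correction and stochastic-integral terms go through essentially as written and yield the stated $\delta^{\frac12}$ bound.
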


\begin{proof}
It is easy to see that
\begin{eqnarray}\label{17}
\!\!\!\!\!\!\!\!&&\mathbb{E}\Big[\int_0^{T\wedge\tau_N^\epsilon}\|X^{\epsilon,\alpha,\phi^\epsilon}_t-X^{\epsilon,\alpha,\phi^\epsilon}_{t(\delta)}\|_{H_1}^2dt\Big]
\nonumber\\
~\leq\!\!\!\!\!\!\!\!&&\mathbb{E}\Big[\int_0^{\delta}\|X^{\epsilon,\alpha,\phi^\epsilon}_t-x\|_{H_1}^2\mathbf{1}_{\{t\leq\tau_N^\epsilon\}}dt\Big]+
\mathbb{E}\Big[\int_{\delta}^{T}\|X^{\epsilon,\alpha,\phi^\epsilon}_t-X^{\epsilon,\alpha,\phi^\epsilon}_{t(\delta)}\|_{H_1}^2\mathbf{1}_{\{t\leq\tau_N^\epsilon\}}dt\Big]
\nonumber\\
~\leq\!\!\!\!\!\!\!\!&&C_N(1+\|x\|_{H_1}^2)\delta+2\mathbb{E}\Big[\int_{\delta}^{T}\|X^{\epsilon,\alpha,\phi^\epsilon}_t-X^{\epsilon,\alpha,\phi^\epsilon}_{t-\delta}\|_{H_1}^2\mathbf{1}_{\{t\leq\tau_N^\epsilon\}}dt\Big]
\nonumber\\
\!\!\!\!\!\!\!\!&&+2\mathbb{E}\Big[\int_{\delta}^{T}\|X^{\epsilon,\alpha,\phi^\epsilon}_{t(\delta)}-X^{\epsilon,\alpha,\phi^\epsilon}_{t-\delta}\|_{H_1}^2\mathbf{1}_{\{t\leq\tau_N^\epsilon\}}dt\Big].
\end{eqnarray}
We now estimate the second term of right hand side of (\ref{17}). Applying It\^{o}'s formula yields that
\begin{eqnarray}\label{23}
\!\!\!\!\!\!\!\!&&\|X^{\epsilon,\alpha,\phi^\epsilon}_t-X^{\epsilon,\alpha,\phi^\epsilon}_{t-\delta}\|_{H_1}^2
\nonumber\\
~=\!\!\!\!\!\!\!\!&&2\int_{t-\delta}^t{}_{V_1^*}\langle A(X^{\epsilon,\alpha,\phi^\epsilon}_s),X^{\epsilon,\alpha,\phi^\epsilon}_s-X^{\epsilon,\alpha,\phi^\epsilon}_{t-\delta}\rangle_{V_1}ds
\nonumber\\
\!\!\!\!\!\!\!\!&&+2\int_{t-\delta}^t
\langle F_1(X^{\epsilon,\alpha,\phi^\epsilon}_s,Y^{\epsilon,\alpha,\phi^\epsilon}_s),X^{\epsilon,\alpha,\phi^\epsilon}_s-X^{\epsilon,\alpha,\phi^\epsilon}_{t-\delta}\rangle_{H_1}ds
\nonumber\\
\!\!\!\!\!\!\!\!&&+2\int_{t-\delta}^t\langle G_1(X^{\epsilon,\alpha,\phi^\epsilon}_s)\phi^\epsilon_s,X^{\epsilon,\alpha,\phi^\epsilon}_s-X^{\epsilon,\alpha,\phi^\epsilon}_{t-\delta}\rangle_{H_1}ds
+\epsilon\int_{t-\delta}^t\|G_1(X^{\epsilon,\alpha,\phi^\epsilon}_s)\|_{L_2(U,H_1)}^2ds
\nonumber\\
\!\!\!\!\!\!\!\!&&+2\sqrt{\epsilon}\int_{t-\delta}^t\langle G_1(X^{\epsilon,\alpha,\phi^\epsilon}_s)dW_s,X^{\epsilon,\alpha,\phi^\epsilon}_s-X^{\epsilon,\alpha,\phi^\epsilon}_{t-\delta}\rangle_{H_1}
\nonumber\\
=:\!\!\!\!\!\!\!\!&&\sum_{i=1}^{5}K_i(t).
\end{eqnarray}

Let us now consider the terms $\mathbb{E}\Big[\int_\delta^T|K_i(t)|\mathbf{1}_{\{t\leq\tau_N^\epsilon\}}dt\Big]$, $i=1,2,...,5$, respectively. According to $({\mathbf{A}}{\mathbf{3}})$ and H\"{o}lder's inequality, there is a constant $C_{N,T}>0$,
\begin{eqnarray}\label{18}
\!\!\!\!\!\!\!\!&&\mathbb{E}\Big[\int_\delta^T|K_1(t)|\mathbf{1}_{\{t\leq\tau_N^\epsilon\}}dt\Big]
\nonumber\\
~\leq\!\!\!\!\!\!\!\!&& C\mathbb{E}\Big[\int_\delta^T\int_{t-\delta}^t\|A(X^{\epsilon,\alpha,\phi^\epsilon}_s)\|_{V_1^*}\|X^{\epsilon,\alpha,\phi^\epsilon}_s-X^{\epsilon,\alpha,\phi^\epsilon}_{t-\delta}\|_{V_1}\mathbf{1}_{\{t\leq\tau_N^\epsilon\}}dsdt\Big]
\nonumber\\
~\leq\!\!\!\!\!\!\!\!&& C\Big[\mathbb{E}\Big(\int_\delta^T\int_{t-\delta}^t\|A(X^{\epsilon,\alpha,\phi^\epsilon}_s)\|_{V_1^*}^{\frac{\gamma_1}{\gamma_1-1}}\mathbf{1}_{\{t\leq\tau_N^\epsilon\}}dsdt\Big)\Big]^{\frac{\gamma_1-1}{\gamma_1}}
\nonumber\\
\!\!\!\!\!\!\!\!&&\cdot \Big[\mathbb{E}\Big(\int_\delta^T\int_{t-\delta}^t\|X^{\epsilon,\alpha,\phi^\epsilon}_s-X^{\epsilon,\alpha,\phi^\epsilon}_{t-\delta}\|_{V_1}^{\gamma_1}\mathbf{1}_{\{t\leq\tau_N^\epsilon\}}dsdt\Big)\Big]^{\frac{1}{\gamma_1}}
\nonumber\\
~\leq\!\!\!\!\!\!\!\!&&C\Big[\delta\mathbb{E}\Big(\int_0^T(1+\|X^{\epsilon,\alpha,\phi^\epsilon}_t\|^{\gamma_1}_{V_1})(1+\|X^{\epsilon,\alpha,\phi^\epsilon}_t\|^{\beta_1}_{H_1})\mathbf{1}_{\{t\leq\tau_N^\epsilon\}}dt\Big)\Big]^{\frac{\gamma_1-1}{\gamma_1}}
\Big[\delta\mathbb{E}\int_0^T\|X^{\epsilon,\alpha,\phi^\epsilon}_t\|^{\gamma_1}_{V_1}dt\Big]^{\frac{1}{\gamma_1}}
\nonumber\\
~\leq\!\!\!\!\!\!\!\!&&C_{N,T}\delta(1+\|x\|_{H_1}^2+\|y\|_{H_2}^2),
\end{eqnarray}
where we used the definition of $\tau_N^\epsilon$ in the last step, and the third inequality is owing to
\begin{eqnarray*}
\!\!\!\!\!\!\!\!&&\mathbb{E}\Big(\int_\delta^T\int_{t-\delta}^t\|A(X^{\epsilon,\alpha,\phi^\epsilon}_s)\|_{V_1^*}^{\frac{\gamma_1}{\gamma_1-1}}\mathbf{1}_{\{t\leq\tau_N^\epsilon\}}dsdt\Big)
\nonumber\\
~=\!\!\!\!\!\!\!\!&&\mathbb{E}\Big[\int_0^\delta\|A(X^{\epsilon,\alpha,\phi^\epsilon}_s)\|_{V_1^*}^{\frac{\gamma_1}{\gamma_1-1}}\Big(\int_{\delta}^{s+\delta}\mathbf{1}_{\{t\leq\tau_N^\epsilon\}}dt\Big)ds+\int_\delta^{T-\delta}\|A(X^{\epsilon,\alpha,\phi^\epsilon}_s)\|_{V_1^*}^{\frac{\gamma_1}{\gamma_1-1}}\Big(\int_{s}^{s+\delta}\mathbf{1}_{\{t\leq\tau_N^\epsilon\}}dt\Big)ds\Big]
\nonumber\\
\!\!\!\!\!\!\!\!&&+\mathbb{E}\Big[\int_{T-\delta}^{T}\|A(X^{\epsilon,\alpha,\phi^\epsilon}_s)\|_{V_1^*}^{\frac{\gamma_1}{\gamma_1-1}}\Big(\int_{s}^{T}\mathbf{1}_{\{t\leq\tau_N^\epsilon\}}dt\Big)ds\Big]
\nonumber\\
~\leq\!\!\!\!\!\!\!\!&&\delta\mathbb{E}\Big[\int_0^{T}\|A(X^{\epsilon,\alpha,\phi^\epsilon}_s)\|_{V_1^*}^{\frac{\gamma_1}{\gamma_1-1}}\mathbf{1}_{\{s\leq\tau_N^\epsilon\}}ds\Big].
\end{eqnarray*}
By Remark \ref{r1}, (\ref{l4}), (\ref{l5}) and H\"{o}lder's inequality, it follows that
\begin{eqnarray}\label{19}
\!\!\!\!\!\!\!\!&&\mathbb{E}\Big[\int_\delta^T|K_2(t)|\mathbf{1}_{\{t\leq\tau_N^\epsilon\}}dt\Big]
\nonumber\\
~\leq\!\!\!\!\!\!\!\!&&C\Big[\mathbb{E}\Big(\int_\delta^T\int_{t-\delta}^t\|F_1(X^{\epsilon,\alpha,\phi^\epsilon}_s,Y^{\epsilon,\alpha,\phi^\epsilon}_s)\|_{H_1}^2\mathbf{1}_{\{t\leq\tau_N^\epsilon\}}dsdt\Big)\Big]^{\frac{1}{2}}
\nonumber\\
\!\!\!\!\!\!\!\!&&\cdot\Big[\mathbb{E}\Big(\int_\delta^T\int_{t-\delta}^t\|X^{\epsilon,\alpha,\phi^\epsilon}_s-X^{\epsilon,\alpha,\phi^\epsilon}_{t-\delta}\|_{H_1}^2\mathbf{1}_{\{t\leq\tau_N^\epsilon\}}dsdt\Big)\Big]^{\frac{1}{2}}
\nonumber\\
~\leq\!\!\!\!\!\!\!\!&&C\Big[\delta\mathbb{E}\Big(\int_0^T(1+\|X^{\epsilon,\alpha,\phi^\epsilon}_s\|_{H_1}^2+\|Y^{\epsilon,\alpha,\phi^\epsilon}_s\|_{H_2}^2)ds\Big)\Big]^{\frac{1}{2}}\Big[\delta\mathbb{E}\Big(\int_0^T\|X^{\epsilon,\alpha,\phi^\epsilon}_s\|_{H_1}^2ds\Big)\Big]^{\frac{1}{2}}
\nonumber\\
~\leq\!\!\!\!\!\!\!\!&&C_{T}\delta(1+\|x\|_{H_1}^2+\|y\|_{H_2}^2).
\end{eqnarray}
From the condition $({\mathbf{A}}{\mathbf{3}})$, the term $\mathbb{E}\Big[\int_\delta^T|K_3(t)|\mathbf{1}_{\{t\leq\tau_N^\epsilon\}}dt\Big]$ can be controlled by
\begin{eqnarray}\label{20}
\!\!\!\!\!\!\!\!&&\mathbb{E}\Big[\int_\delta^T|K_3(t)|\mathbf{1}_{\{t\leq\tau_N^\epsilon\}}dt\Big]
\nonumber\\
~\leq\!\!\!\!\!\!\!\!&&C\Big[\mathbb{E}\Big(\int_\delta^T\int_{t-\delta}^t\|G_1(X^{\epsilon,\alpha,\phi^\epsilon}_s)\|_{L_2(U,H_1)}^2\|\phi_s^\epsilon\|_U^2\mathbf{1}_{\{t\leq\tau_N^\epsilon\}}dsdt\Big)\Big]^{\frac{1}{2}}
\nonumber\\
\!\!\!\!\!\!\!\!&&\cdot\Big[\mathbb{E}\Big(\int_\delta^T\int_{t-\delta}^t\|X^{\epsilon,\alpha,\phi^\epsilon}_s-X^{\epsilon,\alpha,\phi^\epsilon}_{t-\delta}\|_{H_1}^2\mathbf{1}_{\{t\leq\tau_N^\epsilon\}}dsdt\Big)\Big]^{\frac{1}{2}}
\nonumber\\
~\leq\!\!\!\!\!\!\!\!&&C\Big[\delta\mathbb{E}\Big(\int_0^T(1+\|X^{\epsilon,\alpha,\phi^\epsilon}_t\|_{H_1}^2)\|\phi_t^\epsilon\|_U^2\mathbf{1}_{\{t\leq\tau_N^\epsilon\}}dt\Big)\Big]^{\frac{1}{2}}
\Big[\delta\mathbb{E}\Big(\int_0^T\|X^{\epsilon,\alpha,\phi^\epsilon}_s\|_{H_1}^2ds\Big)\Big]^{\frac{1}{2}}
\nonumber\\
~\leq\!\!\!\!\!\!\!\!&&C_{N,T}\delta(1+\|x\|_{H_1}^2+\|y\|_{H_2}^2).
\end{eqnarray}
Following the similar calculations, the term $\mathbb{E}\Big[\int_\delta^T|K_4(t)|\mathbf{1}_{\{t\leq\tau_N^\epsilon\}}dt\Big]$ is estimated by
\begin{eqnarray}\label{21}
\mathbb{E}\Big[\int_\delta^T|K_4(t)|\mathbf{1}_{\{t\leq\tau_N^\epsilon\}}dt\Big]\leq\!\!\!\!\!\!\!\!&&C_T\delta\epsilon\mathbb{E}\Big[1+\sup_{t\in[0,T]}\|X^{\epsilon,\alpha,\phi^\epsilon}_t\|_{H_1}^2\Big]
\nonumber\\
\leq\!\!\!\!\!\!\!\!&&C_T\delta(1+\|x\|_{H_1}^2+\|y\|_{H_2}^2).
\end{eqnarray}
Applying Burkholder-Davis-Gundy's inequality and (\ref{14}), we indicate that
\begin{eqnarray}\label{22}
\!\!\!\!\!\!\!\!&&\mathbb{E}\Big[\int_\delta^T|K_5(t)|\mathbf{1}_{\{t\leq\tau_N^\epsilon\}}dt\Big]
\nonumber\\
~\leq\!\!\!\!\!\!\!\!&&C\int_\delta^T\Big[\mathbb{E}\Big(\int_{t-\delta}^t\|G_1(X^{\epsilon,\alpha,\phi^\epsilon}_s)\|_{L_2(U,H_1)}^2
\|X^{\epsilon,\alpha,\phi^\epsilon}_s-X^{\epsilon,\alpha,\phi^\epsilon}_{t-\delta}\|_{H_1}^2\mathbf{1}_{\{t\leq\tau_N^\epsilon\}}ds\Big)^{\frac{1}{2}}\Big]dt
\nonumber\\
~\leq\!\!\!\!\!\!\!\!&&C_T\Big[\mathbb{E}\Big(\int_\delta^T\int_{t-\delta}^t(1+\|X^{\epsilon,\alpha,\phi^\epsilon}_s\|_{H_1}^2)\|X^{\epsilon,\alpha,\phi^\epsilon}_s-X^{\epsilon,\alpha,\phi^\epsilon}_{t-\delta}\|_{H_1}^2\mathbf{1}_{\{t\leq\tau_N^\epsilon\}}dsdt\Big)\Big]^{\frac{1}{2}}
\nonumber\\
~\leq\!\!\!\!\!\!\!\!&&C_{N,T}\delta^{\frac{1}{2}}\Big[\mathbb{E}\Big(1+\sup_{t\in[0,T]}\|X^{\epsilon,\alpha,\phi^\epsilon}_t\|_{H_1}^2\Big)\Big]^{\frac{1}{2}}
\nonumber\\
~\leq\!\!\!\!\!\!\!\!&&C_{N,T}\delta^{\frac{1}{2}}(1+\|x\|_{H_1}^2+\|y\|_{H_2}^2).
\end{eqnarray}
Substituting (\ref{18})-(\ref{22}) into (\ref{23}), we conclude that
\begin{eqnarray}\label{64}
\mathbb{E}\Big[\int_\delta^T\|X^{\epsilon,\alpha,\phi^\epsilon}_t-X^{\epsilon,\alpha,\phi^\epsilon}_{t-\delta}\|_{H_1}^2\mathbf{1}_{\{t\leq\tau_N^\epsilon\}}dt\Big]
\leq C_{N,T}\delta^{\frac{1}{2}}(1+\|x\|_{H_1}^2+\|y\|_{H_2}^2).
\end{eqnarray}
Following the similar arguments as in the proof of (\ref{64}) gives that
\begin{eqnarray}\label{65}
\mathbb{E}\Big[\int_{\delta}^{T}\|X^{\epsilon,\alpha,\phi^\epsilon}_{t(\delta)}-X^{\epsilon,\alpha,\phi^\epsilon}_{t-\delta}\|_{H_1}^2\mathbf{1}_{\{t\leq\tau_N^\epsilon\}}dt\Big]
\leq C_{N,T}\delta^{\frac{1}{2}}(1+\|x\|_{H_1}^2+\|y\|_{H_2}^2).
\end{eqnarray}
Finally, combining (\ref{64})-(\ref{65}) with (\ref{17}) implies Lemma \ref{l7}. The proof is completed.   \hspace{\fill}$\Box$
\end{proof}

\section{Proof of main results}
\setcounter{equation}{0}
 \setcounter{definition}{0}
In this section, we aim to prove the main results in Theorem \ref{t1} and \ref{t3}. We first consider an auxiliary equation associated with the fast component of Eq.~(\ref{e1}), which help us to prove the convergence of solutions to the stochastic control problem (\ref{e5}).

\subsection{The construction of an auxiliary process}
Since we want to use the approach of time discretization developed by Khasminskii \cite{K}, we first establish the following auxiliary process $\widehat{Y}_t^{\epsilon,\alpha}\in H_2$ and divide the time interval $[0,T]$ into some subintervals of size $\delta>0$ depending on $\alpha$, which will be chosen appropriately in the next subsection. Consider the following SPDE
\begin{equation}\label{e6}
\left\{ \begin{aligned}
&d\widehat{Y}^{\epsilon,\alpha}_t=\frac{1}{\alpha}F_2(X^{\epsilon,\alpha,\phi^\epsilon}_{t(\delta)},\widehat{Y}^{\epsilon,\alpha}_t)dt+\frac{1}{\sqrt{\alpha}}G_2dW_t,\\
&\widehat{Y}^{\epsilon,\alpha}_0=y.
\end{aligned} \right.
\end{equation}
It is easy to see that for each $k\in \mathbb{N}$ and $t\in[k\delta,(k+1)\delta\wedge T]$,
\begin{equation}\label{e7}
\widehat{Y}^{\epsilon,\alpha}_t=\widehat{Y}^{\epsilon,\alpha}_{k\delta}+\frac{1}{\alpha}\int_{k\delta}^tF_2(X^{\epsilon,\alpha,\phi^\epsilon}_{k\delta},\widehat{Y}^{\epsilon,\alpha}_s)ds
+\frac{1}{\sqrt{\alpha}}\int_{k\delta}^tG_2dW_s.
\end{equation}

Following almost same calculations as in the proof of Lemma \ref{l6}, one can easily get the energy estimate for $\widehat{Y}^{\epsilon,\alpha}_t$ as follows.
\begin{lemma}\label{l8}
For any initial values $x\in H_1$, $y\in H_2$ and $\epsilon,\alpha\in(0,1)$, there is a constant $C>0$ such that
\begin{equation}\label{24}
\sup_{t\in[0,T]}\mathbb{E}\|\widehat{Y}^{\epsilon,\alpha}_t\|_{H_2}^2\leq C(1+\|x\|_{H_1}^2+\|y\|_{H_2}^2).
\end{equation}
\end{lemma}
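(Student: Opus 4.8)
The plan is to mimic the a priori energy estimate used for (\ref{15}) in the proof of Lemma \ref{l6}; in fact the argument is slightly simpler here, since equation (\ref{e6}) carries no control drift of the form $\frac{1}{\sqrt{\alpha\epsilon}}G_2\phi^\epsilon_t$. First I would note that (\ref{e6}) is a standard variational SPDE: by Hypothesis \ref{h2} the map $v\mapsto F_2(X^{\epsilon,\alpha,\phi^\epsilon}_{t(\delta)},v)$ is hemicontinuous, strictly monotone and of the appropriate growth in the unknown (with $X^{\epsilon,\alpha,\phi^\epsilon}_{t(\delta)}$ entering as a given $\mathscr{F}_t$-adapted coefficient), so (\ref{e6}) is well posed exactly as the frozen equation (\ref{e3}) via \cite[Theorem 4.2.4]{LR1}, and the It\^{o} energy equality applies.

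Next I would apply It\^{o}'s formula to $\|\widehat{Y}^{\epsilon,\alpha}_t\|_{H_2}^2$, giving
$$\|\widehat{Y}^{\epsilon,\alpha}_t\|_{H_2}^2=\|y\|_{H_2}^2+\frac{2}{\alpha}\int_0^t{}_{V_2^*}\langle F_2(X^{\epsilon,\alpha,\phi^\epsilon}_{s(\delta)},\widehat{Y}^{\epsilon,\alpha}_s),\widehat{Y}^{\epsilon,\alpha}_s\rangle_{V_2}\,ds+\frac{t}{\alpha}\|G_2\|_{L_2(U,H_2)}^2+\frac{2}{\sqrt{\alpha}}\int_0^t\langle G_2\,dW_s,\widehat{Y}^{\epsilon,\alpha}_s\rangle_{H_2}.$$
Taking expectations annihilates the stochastic integral (on $(\Omega,\mathscr{F},\mathscr{F}_t,\mathbb{P})$ the process $W$ is still a Wiener process and $\widehat{Y}^{\epsilon,\alpha}$ is adapted), and then I would invoke the coercivity bound already used in the proof of Lemma \ref{l6} (following \cite[Lemma 4.3.8]{LR1} under Hypothesis \ref{h2}): there is $\eta\in(0,\theta_2)$ with $2{}_{V_2^*}\langle F_2(u,v),v\rangle_{V_2}\leq-\eta\|v\|_{H_2}^2+C(1+\|u\|_{H_1}^2)$. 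This yields
$$\frac{d}{dt}\mathbb{E}\|\widehat{Y}^{\epsilon,\alpha}_t\|_{H_2}^2\leq-\frac{\eta}{\alpha}\mathbb{E}\|\widehat{Y}^{\epsilon,\alpha}_t\|_{H_2}^2+\frac{C}{\alpha}\big(1+\mathbb{E}\|X^{\epsilon,\alpha,\phi^\epsilon}_{t(\delta)}\|_{H_1}^2+\|G_2\|_{L_2(U,H_2)}^2\big),$$
and the comparison (Gronwall) principle gives
$$\mathbb{E}\|\widehat{Y}^{\epsilon,\alpha}_t\|_{H_2}^2\leq e^{-\frac{\eta}{\alpha}t}\|y\|_{H_2}^2+\frac{C}{\alpha}\int_0^t e^{-\frac{\eta}{\alpha}(t-s)}\big(1+\|G_2\|_{L_2(U,H_2)}^2+\mathbb{E}\|X^{\epsilon,\alpha,\phi^\epsilon}_{s(\delta)}\|_{H_1}^2\big)\,ds.$$
Since $s(\delta)\in[0,T]$, estimate (\ref{14}) of Lemma \ref{l6} bounds $\sup_{s\in[0,T]}\mathbb{E}\|X^{\epsilon,\alpha,\phi^\epsilon}_{s(\delta)}\|_{H_1}^2\leq\mathbb{E}\big[\sup_{r\in[0,T]}\|X^{\epsilon,\alpha,\phi^\epsilon}_r\|_{H_1}^2\big]\leq C(1+\|x\|_{H_1}^2+\|y\|_{H_2}^2)$; plugging this in and using $\frac{1}{\alpha}\int_0^t e^{-\frac{\eta}{\alpha}(t-s)}\,ds=\frac{1}{\eta}\big(1-e^{-\frac{\eta}{\alpha}t}\big)\leq\frac{1}{\eta}$ — uniformly in $\alpha\in(0,1)$ — one concludes $\mathbb{E}\|\widehat{Y}^{\epsilon,\alpha}_t\|_{H_2}^2\leq\|y\|_{H_2}^2+\frac{C}{\eta}(1+\|x\|_{H_1}^2+\|y\|_{H_2}^2)$ for every $t\in[0,T]$, and taking the supremum over $t$ gives (\ref{24}).

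There is no genuine obstacle here — this is a routine a priori bound. The only points that merit a little attention are that the slow variable enters $F_2$ frozen at the mesh point $t(\delta)$ rather than at $t$, which is harmless precisely because (\ref{14}) controls $X^{\epsilon,\alpha,\phi^\epsilon}$ uniformly on $[0,T]$, and that every constant must be tracked to remain independent of $\alpha$, which is ensured by the $\frac{1}{\alpha}$ bookkeeping in the Gronwall step combined with the cancellation $\frac{1}{\alpha}\int_0^t e^{-\frac{\eta}{\alpha}(t-s)}\,ds\leq\frac{1}{\eta}$.
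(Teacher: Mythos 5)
Your proposal is correct and is essentially the paper's intended argument: the paper proves Lemma \ref{l8} only by remarking that it follows the same calculations as Lemma \ref{l6}, i.e.\ It\^{o}'s formula for $\|\widehat{Y}^{\epsilon,\alpha}_t\|_{H_2}^2$, the coercivity bound $2{}_{V_2^*}\langle F_2(u,v),v\rangle_{V_2}\leq-\eta\|v\|_{H_2}^2+C(1+\|u\|_{H_1}^2)$, the comparison theorem, and the bound (\ref{14}) on the slow component. You also correctly identify why the bound here is uniform in $t$ (sup of the expectation) rather than only integrated in time as in (\ref{15}): the absence of the control drift $\frac{1}{\sqrt{\alpha\epsilon}}G_2\phi^\epsilon_t$ in (\ref{e6}) together with the cancellation $\frac{1}{\alpha}\int_0^t e^{-\frac{\eta}{\alpha}(t-s)}ds\leq\frac{1}{\eta}$.
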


Now we would like to prove an important lemma characterizing the difference between processes $\widehat{Y}^{\epsilon,\alpha}_t$ and $Y^{\epsilon,\alpha,\phi^\epsilon}_t$.
\begin{lemma}\label{l9}
For any $x\in H_1$, $y\in H_2$ and $\epsilon,\alpha\in(0,1)$, there is a constant $C_N>0$ such that
\begin{equation*}
\mathbb{E}\Big[\int_0^{T\wedge\tau_N^\epsilon}\|Y^{\epsilon,\alpha,\phi^\epsilon}_t-\widehat{Y}^{\epsilon,\alpha}_t\|_{H_2}^2dt\Big]\leq C_N(1+\|x\|_{H_1}^2+\|y\|_{H_2}^2)\Big[\Big(\frac{\alpha}{\epsilon}\Big)+\delta^{\frac{1}{2}}\Big].
\end{equation*}
\end{lemma}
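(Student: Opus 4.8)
The plan is to study the difference process $\Theta_t:=Y^{\epsilon,\alpha,\phi^\epsilon}_t-\widehat{Y}^{\epsilon,\alpha}_t$; throughout I abbreviate $X_t:=X^{\epsilon,\alpha,\phi^\epsilon}_t$, $Y_t:=Y^{\epsilon,\alpha,\phi^\epsilon}_t$, $\widehat Y_t:=\widehat{Y}^{\epsilon,\alpha}_t$. Since $Y^{\epsilon,\alpha,\phi^\epsilon}$ and $\widehat{Y}^{\epsilon,\alpha}$ are driven by the \emph{same} noise $\tfrac1{\sqrt\alpha}G_2\,dW_t$, the martingale parts cancel and $\Theta$ solves the pathwise (random) equation
\[
\frac{d\Theta_t}{dt}=\frac1\alpha\big[F_2(X_t,Y_t)-F_2(X_{t(\delta)},\widehat Y_t)\big]+\frac1{\sqrt{\alpha\epsilon}}G_2\phi^\epsilon_t,\qquad \Theta_0=0.
\]
First I would apply the energy equality to $\|\Theta_t\|_{H_2}^2$ (legitimate because, by Lemma \ref{l8}, estimate (\ref{15}) and $({\mathbf{H}}{\mathbf{4}})$, both $F_2(X_\cdot,Y_\cdot)$ and $F_2(X_{\cdot(\delta)},\widehat Y_\cdot)$ lie in $L^{\gamma_2/(\gamma_2-1)}([0,T];V_2^*)$, and there is no martingale term), then split the increment as $[F_2(X_t,Y_t)-F_2(X_t,\widehat Y_t)]+[F_2(X_t,\widehat Y_t)-F_2(X_{t(\delta)},\widehat Y_t)]$. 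Pairing the first bracket with $\Theta_t$ and using the strict monotonicity (\ref{h6}) gives $-\tfrac\kappa\alpha\|\Theta_t\|_{H_2}^2$; pairing the second with $\Theta_t$ and using the cross-Lipschitz bound (\ref{h3}) gives at most $\tfrac C\alpha\|X_t-X_{t(\delta)}\|_{H_1}\|\Theta_t\|_{H_2}$; the control term is at most $\tfrac C{\sqrt{\alpha\epsilon}}\|\phi^\epsilon_t\|_U\|\Theta_t\|_{H_2}$. Absorbing the two cross terms into $\tfrac{\kappa}{2\alpha}\|\Theta_t\|_{H_2}^2$ by Young's inequality yields
\[
\frac{d}{dt}\|\Theta_t\|_{H_2}^2\le-\frac{\kappa}{2\alpha}\|\Theta_t\|_{H_2}^2+\frac C\alpha\|X_t-X_{t(\delta)}\|_{H_1}^2+\frac C\epsilon\|\phi^\epsilon_t\|_U^2.
\]

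Next I would invoke the comparison principle (variation of constants) to get, for every $t$,
\[
\|\Theta_t\|_{H_2}^2\le\int_0^t e^{-\frac{\kappa}{2\alpha}(t-s)}\Big(\frac C\alpha\|X_s-X_{s(\delta)}\|_{H_1}^2+\frac C\epsilon\|\phi^\epsilon_s\|_U^2\Big)\,ds,
\]
and then integrate $t$ over $[0,T\wedge\tau_N^\epsilon]$. For $s\le t\le\tau_N^\epsilon$ one has $\mathbf 1_{\{s\le\tau_N^\epsilon\}}=1$, so the indicator may be freely inserted into the term $\|X_s-X_{s(\delta)}\|_{H_1}^2$; I then drop the remaining (nonnegative) outer indicator in $t$, apply Fubini, and use $\int_s^T e^{-\frac{\kappa}{2\alpha}(t-s)}\,dt\le\frac{2\alpha}{\kappa}$. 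The crucial cancellation is that this last bound eats the singular prefactor $\tfrac1\alpha$ and turns $\tfrac1\epsilon$ into $\tfrac\alpha\epsilon$, so that after taking expectations
\[
\mathbb E\!\int_0^{T\wedge\tau_N^\epsilon}\!\!\|\Theta_t\|_{H_2}^2\,dt\le C\,\mathbb E\!\int_0^{T\wedge\tau_N^\epsilon}\!\!\|X_t-X_{t(\delta)}\|_{H_1}^2\,dt+C\Big(\frac\alpha\epsilon\Big)\mathbb E\!\int_0^T\!\|\phi^\epsilon_t\|_U^2\,dt.
\]
Finally, Lemma \ref{l7} bounds the first term by $C_N(1+\|x\|_{H_1}^2+\|y\|_{H_2}^2)\delta^{1/2}$ and $\phi^\epsilon\in\mathcal A_M$ bounds $\mathbb E\int_0^T\|\phi^\epsilon_t\|_U^2\,dt$ by $M$; combining these (and using $1+\|x\|_{H_1}^2+\|y\|_{H_2}^2\ge1$ to absorb the constant $CM$) yields the asserted estimate.

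This is a monotonicity/Gronwall argument in the spirit of the proof of Lemma \ref{l6}, so I do not expect a deep obstacle; the two points to handle with care are (i) the bookkeeping with the stopping time $\tau_N^\epsilon$ inside the double (Fubini) integral — the indicator must be inserted \emph{before} interchanging the integrals — and (ii) checking that the dissipation rate $\kappa/(2\alpha)$ exactly compensates the singular factor $1/\alpha$ in the drift increment, which is precisely why the dissipativity constant $\kappa$ from $({\mathbf{H}}{\mathbf{2}})$, not merely the sign, must be used.
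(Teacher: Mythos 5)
Your argument is correct and is essentially the paper's own proof: the same noise-cancelled equation for the difference, the same splitting of the drift handled by the strict monotonicity $({\mathbf{H}}{\mathbf{2}})$, the cross-Lipschitz bound (\ref{h3}) and Young's inequality, followed by the comparison theorem, the insertion of the indicator $\mathbf{1}_{\{s\leq\tau_N^\epsilon\}}$ before Fubini, the bound $\int_s^T e^{-c(t-s)/\alpha}dt\leq C\alpha$, and finally Lemma \ref{l7} together with $\phi^\epsilon\in\mathcal{A}_M$. The only cosmetic difference is which argument of $F_2$ you freeze in the splitting ($X_t$ rather than $X_{t(\delta)}$ in the monotonicity term), which changes nothing.
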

\begin{proof}
Letting $Z_t:=Y^{\epsilon,\alpha,\phi^\epsilon}_t-\widehat{Y}^{\epsilon,\alpha}_t$, which fulfills
\begin{eqnarray*}
dZ_t=\!\!\!\!\!\!\!\!&&\frac{1}{\alpha}\Big[F_2(X^{\epsilon,\alpha,\phi^\epsilon}_t,Y^{\epsilon,\alpha,\phi^\epsilon}_t)-F_2(X^{\epsilon,\alpha,\phi^\epsilon}_{t(\delta)},\widehat{Y}^{\epsilon,\alpha}_t)\Big]dt
+\frac{1}{\sqrt{\alpha\epsilon}}G_2\phi^\epsilon_tdt,~~Z_0=0.
\end{eqnarray*}
Then it follows that
\begin{eqnarray}\label{28}
\frac{d}{dt}\|Z_{t}\|_{H_2}^2=\!\!\!\!\!\!\!\!&&\frac{2}{\alpha}\Big[{}_{V_2^*}\langle
F_2(X^{\epsilon,\alpha,\phi^\epsilon}_t,Y^{\epsilon,\alpha,\phi^\epsilon}_t)-F_2(X^{\epsilon,\alpha,\phi^\epsilon}_{t(\delta)},\widehat{Y}^{\epsilon,\alpha}_t),Z_t\rangle_{V_2}\Big]
+\frac{2}{\sqrt{\alpha\epsilon}}\langle G_2\phi^\epsilon_t,Z_t\rangle_{H_2}
\nonumber\\
=:\!\!\!\!\!\!\!\!&&\sum_{i=1}^2I_i(t).
\end{eqnarray}
Let us estimate the terms $I_i(t)$, $i=1,2$, respectively. Taking the condition $({\mathbf{H}}{\mathbf{2}})$ into account, we have
\begin{eqnarray}\label{25}
I_1(t)=\!\!\!\!\!\!\!\!&&\frac{2}{\alpha}{}_{V_2^*}\langle
F_2(X^{\epsilon,\alpha,\phi^\epsilon}_t,Y^{\epsilon,\alpha,\phi^\epsilon}_t)-F_2(X^{\epsilon,\alpha,\phi^\epsilon}_{t(\delta)},Y^{\epsilon,\alpha,\phi^\epsilon}_t),Z_t\rangle_{V_2}
\nonumber\\
\!\!\!\!\!\!\!\!&&+\frac{2}{\alpha}{}_{V_2^*}\langle
F_2(X^{\epsilon,\alpha,\phi^\epsilon}_{t(\delta)},Y^{\epsilon,\alpha,\phi^\epsilon}_t)-F_2(X^{\epsilon,\alpha,\phi^\epsilon}_{t(\delta)},\widehat{Y}^{\epsilon,\alpha}_t),Z_t\rangle_{V_2}
\nonumber\\
\leq\!\!\!\!\!\!\!\!&&-\frac{2\kappa-\varepsilon_0}{\alpha}\|Z_{t}\|_{H_2}^2+\frac{C}{\alpha}\|X^{\epsilon,\alpha,\phi^\epsilon}_t-X^{\epsilon,\alpha,\phi^\epsilon}_{t(\delta)}\|_{H_1}^2,
\end{eqnarray}
where we used Young's inequality in the last step with a small enough constant $\varepsilon_0>0$.

Using Young's inequality gives that
\begin{eqnarray}\label{26}
I_2(t)\leq\!\!\!\!\!\!\!\!&&\frac{2}{\sqrt{\alpha\epsilon}}\sqrt{tr(G_2G_2^*)}\|\phi^\epsilon_t\|_U\|Z_t\|_{H_2}
\nonumber\\
\leq\!\!\!\!\!\!\!\!&&\frac{\varepsilon_0}{\alpha}\|Z_{t}\|_{H_2}^2+\frac{C}{\epsilon}\|\phi^\epsilon_t\|_U^2.
\end{eqnarray}
Substituting (\ref{25})-(\ref{26})  into (\ref{28}) leads to
\begin{eqnarray*}
\frac{d}{dt}\|Z_{t}\|_{H_2}^2\leq\!\!\!\!\!\!\!\!&&-\frac{2\kappa-2\varepsilon_0}{\alpha}\|Z_{t}\|_{H_2}^2+\frac{C}{\epsilon}\|\phi^\epsilon_t\|_U^2
+\frac{C}{\alpha}\|X^{\epsilon,\alpha,\phi^\epsilon}_t-X^{\epsilon,\alpha,\phi^\epsilon}_{t(\delta)}\|_{H_1}^2.
\end{eqnarray*}
By the comparison theorem we have
\begin{eqnarray*}
\|Z_{t}\|_{H_2}^2\leq\!\!\!\!\!\!\!\!&&\frac{C}{\epsilon}\int_0^te^{-\frac{\eta}{\alpha}(t-s)}\|\phi^\epsilon_s\|_U^2ds
+\frac{C}{\alpha}\int_0^te^{-\frac{\eta}{\alpha}(t-s)}\|X^{\epsilon,\alpha,\phi^\epsilon}_s-X^{\epsilon,\alpha,\phi^\epsilon}_{s(\delta)}\|_{H_1}^2ds,
\end{eqnarray*}
here we denote $\eta:=2\kappa-2\varepsilon_0>0$. Multiplying $\mathbf{1}_{\{t\leq\tau_N^\epsilon\}}$ for both sides of the above inequality and taking expectation yields that
\begin{eqnarray*}
\mathbb{E}\Big[\|Z_{t}\|_{H_2}^2\mathbf{1}_{\{t\leq\tau_N^\epsilon\}}\Big]\leq\!\!\!\!\!\!\!\!&&\frac{C}{\epsilon}\mathbb{E}\int_0^{t}e^{-\frac{\eta}{\alpha}(t-s)}\|\phi^\epsilon_s\|_U^2ds
+\frac{C}{\alpha}\mathbb{E}\int_0^{t\wedge\tau_N^\epsilon}e^{-\frac{\eta}{\alpha}(t-s)}\|X^{\epsilon,\alpha,\phi^\epsilon}_s-X^{\epsilon,\alpha,\phi^\epsilon}_{s(\delta)}\|_{H_1}^2ds.
\end{eqnarray*}
Hence, according to Fubini's theorem, one can conclude that
\begin{eqnarray*}
\mathbb{E}\int_0^{T\wedge\tau_N^\epsilon}\|Z_t\|_{H_2}^2dt=\!\!\!\!\!\!\!\!&&\int_0^T\mathbb{E}\Big[\|Z_{t}\|_{H_2}^2\mathbf{1}_{\{t\leq\tau_N^\epsilon\}}\Big]dt
\nonumber\\
\leq\!\!\!\!\!\!\!\!&&\frac{C}{\epsilon}\int_0^T\int_0^te^{-\frac{\eta}{\alpha}(t-s)}\mathbb{E}\Big[\|\phi^\epsilon_s\|_U^2\Big]dsdt
\nonumber\\
\!\!\!\!\!\!\!\!&&+\frac{C}{\alpha}\int_0^T\int_0^te^{-\frac{\eta}{\alpha}(t-s)}\mathbb{E}\Big[\|X^{\epsilon,\alpha,\phi^\epsilon}_s-X^{\epsilon,\alpha,\phi^\epsilon}_{s(\delta)}\|_{H_1}^2\mathbf{1}_{\{s\leq\tau_N^\epsilon\}}\Big]dsdt
\nonumber\\
\leq\!\!\!\!\!\!\!\!&&\frac{C}{\epsilon}\mathbb{E}\Big[\int_0^T\|\phi^\epsilon_s\|_U^2\Big(\int_s^Te^{-\frac{\eta}{\alpha}(t-s)}dt\Big)ds\Big]
\nonumber\\
\!\!\!\!\!\!\!\!&&+\frac{C}{\alpha}\mathbb{E}\Big[\int_0^T\mathbf{1}_{\{s\leq\tau_N^\epsilon\}}\|X^{\epsilon,\alpha,\phi^\epsilon}_s-X^{\epsilon,\alpha,\phi^\epsilon}_{s(\delta)}\|_{H_1}^2\Big(\int_s^Te^{-\frac{\eta}{\alpha}(t-s)}dt\Big)ds\Big]
\nonumber\\
\leq\!\!\!\!\!\!\!\!&&\frac{C}{\eta}\Big(\frac{\alpha}{\epsilon}\Big)\mathbb{E}\Big[\int_0^T\|\phi^\epsilon_t\|_U^2dt\Big]
+\frac{C}{\eta}\mathbb{E}\Big[\int_0^{T\wedge\tau_N^\epsilon}\|X^{\epsilon,\alpha,\phi^\epsilon}_t-X^{\epsilon,\alpha,\phi^\epsilon}_{t(\delta)}\|_{H_1}^2dt\Big]
\nonumber\\
\leq\!\!\!\!\!\!\!\!&&C_{N,T,M}(1+\|x\|_{H_1}^2+\|y\|_{H_2}^2)\Big[\Big(\frac{\alpha}{\epsilon}\Big)+\delta^{\frac{1}{2}}\Big],
\end{eqnarray*}
where the last inequality is owing to  Lemma \ref{l6} and Lemma \ref{l7}, which completes the proof.

\hspace{\fill}$\Box$
\end{proof}

\subsection{Weak convergence}
In this subsection, the aim is to prove that the process $X^{\epsilon,\alpha,\phi^\epsilon}_t$ defined in Eq.~(\ref{e5}) converges to the solution $\bar{X}^{\phi}_t$ of deterministic skeleton equation (\ref{e2}) in distribution, which verifies the \textbf{Condition (A)} (i).

Repeating the very similar arguments as in the proof of Lemma \ref{l7}, one can easily conclude the following lemma.
\begin{lemma}\label{l10}
For  $x\in H_2$ and $\delta>0$ small enough, there is a constant $C>0$ such that
$$\sup_{\phi\in\mathcal{S}_M}\int_0^T\|\bar{X}^\phi_t-\bar{X}^\phi_{t(\delta)}\|_{H_1}^2dt\leq C(1+\|x\|_{H_1}^2)\delta^{\frac{1}{2}}.$$
\end{lemma}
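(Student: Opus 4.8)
The plan is to mimic the proof of Lemma \ref{l7}, but in the much simpler deterministic setting of the skeleton equation \eref{e2}, where there is no stochastic integral and no need for a stopping time. First I would split the integral as in \eref{17}:
\[
\int_0^T\|\bar X^\phi_t-\bar X^\phi_{t(\delta)}\|_{H_1}^2\,dt
\le \int_0^\delta\|\bar X^\phi_t-x\|_{H_1}^2\,dt
+2\int_\delta^T\|\bar X^\phi_t-\bar X^\phi_{t-\delta}\|_{H_1}^2\,dt
+2\int_\delta^T\|\bar X^\phi_{t(\delta)}-\bar X^\phi_{t-\delta}\|_{H_1}^2\,dt,
\]
and observe that the first term is bounded by $C(1+\|x\|_{H_1}^2)\delta$ thanks to the uniform estimate \eref{a1} in Lemma \ref{l5}. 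The third term is handled exactly like the second, so the crux is the middle term.

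For the middle term, I would start from the energy equality for $\bar X^\phi$ on the interval $[t-\delta,t]$, which gives
\[
\|\bar X^\phi_t-\bar X^\phi_{t-\delta}\|_{H_1}^2
= 2\int_{t-\delta}^t {}_{V_1^*}\langle A(\bar X^\phi_s),\bar X^\phi_s-\bar X^\phi_{t-\delta}\rangle_{V_1}\,ds
+2\int_{t-\delta}^t \langle \bar F_1(\bar X^\phi_s)+G_1(\bar X^\phi_s)\phi_s,\bar X^\phi_s-\bar X^\phi_{t-\delta}\rangle_{H_1}\,ds.
\]
Then, paralleling the estimates \eref{18}, \eref{19}, \eref{20} in Lemma \ref{l7}: the $A$-term is bounded via $({\mathbf{A}}{\mathbf{3}})$ and H\"older's inequality in the pair $(V_1^*,V_1)$ with exponents $(\frac{\gamma_1}{\gamma_1-1},\gamma_1)$, using the Fubini-type bound $\int_\delta^T\!\int_{t-\delta}^t(\cdots)\,ds\,dt\le \delta\int_0^T(\cdots)\,ds$, together with \eref{a1}; the $\bar F_1$-term uses that $\bar F_1$ is Lipschitz (hence of linear growth, shown inside the proof of Lemma \ref{l5}) plus Cauchy--Schwarz and \eref{a1}; and the $G_1\phi$-term uses $({\mathbf{A}}{\mathbf{2}})$ (linear growth of $G_1$), $\phi\in S_M$, and \eref{a1}. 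Each of these yields a bound of the form $C(1+\|x\|_{H_1}^2)\delta^{1/2}$, uniformly over $\phi\in S_M$ since all constants depend only on $M$, $T$, $\theta_1$. Summing the three contributions gives the claim.

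The main obstacle, such as it is, is purely bookkeeping: making sure the H\"older split on the $A$-term produces a genuine $\delta^{1/2}$ (from $\delta^{(\gamma_1-1)/\gamma_1}\cdot\delta^{1/\gamma_1}=\delta$, and then one of the two factors is only raised to a power $<1$, so one actually gets $\delta^{1/\gamma_1}$ or better; in fact the time-shift factor $\int_0^T\|\bar X^\phi_s-\bar X^\phi_{t-\delta}\|_{V_1}^{\gamma_1}\,ds$ is itself $O(1)$, not $O(\delta)$, so the overall gain from the $A$-term is $\delta^{(\gamma_1-1)/\gamma_1}$), and then checking that the worst exponent across all terms is $\ge \tfrac12$ — which it is, the bottleneck being whichever term gives exactly $\delta^{1/2}$, matching the statement. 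Since there is no stochastic term here, no Burkholder--Davis--Gundy estimate is needed and no stopping time $\tau_N^\epsilon$ appears; consequently the bound holds for all $\phi\in S_M$ without any restriction, and is in fact cleaner than Lemma \ref{l7}. I would simply remark that the argument is ``the very similar'' deterministic version of Lemma \ref{l7} and omit the repeated computations.
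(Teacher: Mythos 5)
Your proposal follows essentially the same route as the paper, which proves Lemma \ref{l10} precisely by repeating the argument of Lemma \ref{l7} in the deterministic setting: the splitting as in \eref{17}, the energy identity on $[t-\delta,t]$, the estimates of the $A$-, $\bar{F}_1$- and $G_1\phi$-terms as in \eref{18}--\eref{20}, with the uniform bound \eref{a1} of Lemma \ref{l5} playing the role of the stopping time and of Lemma \ref{l6}, and with no stochastic integral to handle.

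One correction to your bookkeeping on the $A$-term, which you yourself single out as the main obstacle. H\"older's inequality is applied to the double integral over the strip $\{(s,t):t\in[\delta,T],\,s\in[t-\delta,t]\}$, and the Fubini-type bound $\int_\delta^T\int_{t-\delta}^t(\cdots)\,ds\,dt\le\delta\int_0^T(\cdots)\,ds$ applies to \emph{both} H\"older factors: to the one containing $\|A(\bar{X}^{\phi}_s)\|_{V_1^*}^{\gamma_1/(\gamma_1-1)}$ and also to the one containing $\|\bar{X}^{\phi}_s-\bar{X}^{\phi}_{t-\delta}\|_{V_1}^{\gamma_1}$, after bounding the latter by $2^{\gamma_1-1}\big(\|\bar{X}^{\phi}_s\|_{V_1}^{\gamma_1}+\|\bar{X}^{\phi}_{t-\delta}\|_{V_1}^{\gamma_1}\big)$ and using \eref{a1}. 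Hence the product of the two factors yields $\delta^{(\gamma_1-1)/\gamma_1}\cdot\delta^{1/\gamma_1}=\delta$, exactly as in \eref{18}, and not merely $\delta^{(\gamma_1-1)/\gamma_1}$ as you claim; your parenthetical treats the second factor as $O(1)$ while applying the strip bound only to the first, which is inconsistent. The point is not cosmetic: Hypothesis \ref{h1} only requires $\gamma_1>1$, and for $\gamma_1\in(1,2)$ (e.g.\ fast diffusion) one has $(\gamma_1-1)/\gamma_1<\tfrac12$, so your stated exponent would fail to give the claimed $\delta^{1/2}$ rate. With the computation done as in \eref{18}, every term in the deterministic case is in fact $O(\delta)$ (the only source of the exponent $\tfrac12$ in Lemma \ref{l7} is the stochastic integral, absent here), so the bound $C(1+\|x\|_{H_1}^2)\delta^{1/2}$ holds uniformly over $\phi\in S_M$, with room to spare.
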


Now we define the following stopping time
$$\widetilde{\tau}_N^\epsilon:=\inf\Big\{t\in[0,T]:\|X^{\epsilon,\alpha,\phi^\epsilon}_t\|_{H_1}+\|\bar{X}^{\phi}_t\|_{H_1}+\int_0^t\|X^{\epsilon,\alpha,\phi^\epsilon}_s\|_{V_1}^{\gamma_1}ds+\int_0^t\|\bar{X}^{\phi}_s\|_{V_1}^{\gamma_1}ds>N\Big\}.$$

\begin{theorem}\label{t4}
Assume that the conditions in Theorem \ref{t1} hold. Let $\{\phi^\epsilon: \epsilon>0\}\subset \mathcal{A}_M$ for
some $M<\infty$. If $\phi^\epsilon$ converge to $\phi$ in distribution
as $S_M$-valued random elements, then
$$\mathcal{G}^\epsilon\left(W_\cdot+\frac{1}{\sqrt{\epsilon}} \int_0^\cdot
\phi^\epsilon_s\ d s \right)\rightarrow \mathcal{G}^0\left(\int_0^\cdot
 \phi_s\ d s \right)  $$
in distribution as $\epsilon\rightarrow 0$.
\end{theorem}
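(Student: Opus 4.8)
The plan is to establish the convergence in two steps, via the \emph{random} skeleton process $\bar X^{\phi^\epsilon}$, i.e.\ the solution of (\ref{e2}) in which the random control $\phi^\epsilon(\omega)\in S_M$ is substituted for $\phi$ (well defined and $\mathscr{F}$-measurable by Lemma \ref{l5}, with the uniform bound (\ref{a1})). \emph{Step 1} is to prove the rate estimate
\[
\lim_{\epsilon\to 0}\mathbb{E}\Big[\Big(\sup_{t\in[0,T]}\|X^{\epsilon,\alpha,\phi^\epsilon}_t-\bar X^{\phi^\epsilon}_t\|_{H_1}^2+\int_0^T\|X^{\epsilon,\alpha,\phi^\epsilon}_t-\bar X^{\phi^\epsilon}_t\|_{V_1}^{\gamma_1}\,dt\Big)\wedge 1\Big]=0,
\]
which treats $\phi^\epsilon$ merely as an element of $\mathcal A_M$ and does \emph{not} use $\phi^\epsilon\to\phi$; it gives $X^{\epsilon,\alpha,\phi^\epsilon}-\bar X^{\phi^\epsilon}\to 0$ in probability in $C([0,T];H_1)\cap L^{\gamma_1}([0,T];V_1)$. \emph{Step 2} is to note that the solution map $S_M\ni\psi\mapsto\bar X^\psi$ is continuous from the weak topology on $S_M$ to the strong topology of $C([0,T];H_1)\cap L^{\gamma_1}([0,T];V_1)$ --- this is precisely the ingredient behind the compactness of $K_M$ in \textbf{Condition (A)}(ii) and is proved separately --- so by the continuous mapping theorem $\phi^\epsilon\to\phi$ in distribution yields $\bar X^{\phi^\epsilon}\to\bar X^\phi=\mathcal G^0(\int_0^\cdot\phi_s\,ds)$ in distribution. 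Combining the two steps gives $X^{\epsilon,\alpha,\phi^\epsilon}=\mathcal G^\epsilon(W_\cdot+\tfrac{1}{\sqrt{\epsilon}}\int_0^\cdot\phi^\epsilon_s\,ds)\to\bar X^\phi$ in distribution.

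For Step 1 I localise by $\widetilde{\tau}_N^\epsilon$: by (\ref{14}) (Chebyshev applied to $\sup_t\|X^{\epsilon,\alpha,\phi^\epsilon}_t\|_{H_1}$ and to $\int_0^T\|X^{\epsilon,\alpha,\phi^\epsilon}_t\|_{V_1}^{\gamma_1}dt$) and (\ref{a1}) for $\bar X^{\phi^\epsilon}$, one has $\sup_{\epsilon\in(0,1)}\mathbb{P}(\widetilde{\tau}_N^\epsilon<T)\to 0$ as $N\to\infty$, so it is enough to bound, for each fixed $N$, the same expression with $T\wedge\widetilde{\tau}_N^\epsilon$ in place of $T$. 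Putting $\Xi_t:=X^{\epsilon,\alpha,\phi^\epsilon}_t-\bar X^{\phi^\epsilon}_t$ and applying It\^o's formula to $\|\Xi_t\|_{H_1}^2$, the operator part is treated by grouping $2{}_{V_1^*}\langle A(X^{\epsilon,\alpha,\phi^\epsilon}_s)-A(\bar X^{\phi^\epsilon}_s),\Xi_s\rangle_{V_1}$ with $\|G_1(X^{\epsilon,\alpha,\phi^\epsilon}_s)-G_1(\bar X^{\phi^\epsilon}_s)\|_{L_2(U,H_1)}^2$ and invoking $({\mathbf{A}}{\mathbf{2}})$, giving $-\theta_1\|\Xi_s\|_{V_1}^{\gamma_1}+(K+\rho(\bar X^{\phi^\epsilon}_s))\|\Xi_s\|_{H_1}^2$; on $\{s\le\widetilde{\tau}_N^\epsilon\}$ the weight $K+\rho(\bar X^{\phi^\epsilon}_s)$ is integrable in $s$ by the growth bound on $\rho$, (\ref{a1}) and the definition of $\widetilde{\tau}_N^\epsilon$. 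The It\^o correction $\epsilon\int_0^s\|G_1(X^{\epsilon,\alpha,\phi^\epsilon}_r)\|_{L_2(U,H_1)}^2dr$ and, after Burkholder--Davis--Gundy and Young, the martingale term contribute quantities of order $\epsilon$ (by (\ref{14})) plus a piece absorbed into $\tfrac{1}{4}\mathbb{E}\sup_{s\le t}\|\Xi_s\|_{H_1}^2$, all of which vanish as $\epsilon\to 0$. The control cross-term equals $\langle[G_1(X^{\epsilon,\alpha,\phi^\epsilon}_s)-G_1(\bar X^{\phi^\epsilon}_s)]\phi^\epsilon_s,\Xi_s\rangle_{H_1}\le C\|\phi^\epsilon_s\|_U\|\Xi_s\|_{H_1}^2$ --- here it is crucial that the \emph{same} control occurs in both equations, which is why we compare with $\bar X^{\phi^\epsilon}$ rather than $\bar X^\phi$ --- and since $\int_0^T\|\phi^\epsilon_s\|_U^2ds\le M$ this is an admissible Gronwall weight.

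The main work is the drift difference $\int_0^{t\wedge\widetilde{\tau}_N^\epsilon}\langle F_1(X^{\epsilon,\alpha,\phi^\epsilon}_s,Y^{\epsilon,\alpha,\phi^\epsilon}_s)-\bar F_1(\bar X^{\phi^\epsilon}_s),\Xi_s\rangle_{H_1}ds$, whose integrand I split through the chain
\begin{gather*}
F_1(X^{\epsilon,\alpha,\phi^\epsilon}_s,Y^{\epsilon,\alpha,\phi^\epsilon}_s)\to F_1(X^{\epsilon,\alpha,\phi^\epsilon}_{s(\delta)},Y^{\epsilon,\alpha,\phi^\epsilon}_s)\to F_1(X^{\epsilon,\alpha,\phi^\epsilon}_{s(\delta)},\widehat{Y}^{\epsilon,\alpha}_s)\\
\to\bar F_1(X^{\epsilon,\alpha,\phi^\epsilon}_{s(\delta)})\to\bar F_1(\bar X^{\phi^\epsilon}_{s(\delta)})\to\bar F_1(\bar X^{\phi^\epsilon}_s),
\end{gather*}
with $\delta=\delta(\alpha)\downarrow 0$ chosen so that $\alpha/\delta\to 0$ (e.g.\ $\delta=\sqrt{\alpha}$). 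Except for the third one, each consecutive difference is controlled by the Lipschitz continuity of $F_1$ in each variable and of $\bar F_1$ (Lemma \ref{l5}, Step 1): paired with $\Xi_s$ and handled by Cauchy--Schwarz and Young, it produces either $C\|\Xi_s\|_{H_1}^2$ (absorbed by Gronwall; for the step $\bar F_1(X^{\epsilon,\alpha,\phi^\epsilon}_{s(\delta)})-\bar F_1(\bar X^{\phi^\epsilon}_{s(\delta)})$ one also uses $\|\Xi_{s(\delta)}\|_{H_1}\le\|\Xi_s\|_{H_1}+\|X^{\epsilon,\alpha,\phi^\epsilon}_s-X^{\epsilon,\alpha,\phi^\epsilon}_{s(\delta)}\|_{H_1}+\|\bar X^{\phi^\epsilon}_s-\bar X^{\phi^\epsilon}_{s(\delta)}\|_{H_1}$) or a term whose $\mathbb{E}\int_0^{T\wedge\widetilde{\tau}_N^\epsilon}(\cdots)ds$ tends to $0$ --- namely $\mathbb{E}\int\|X^{\epsilon,\alpha,\phi^\epsilon}_s-X^{\epsilon,\alpha,\phi^\epsilon}_{s(\delta)}\|_{H_1}^2ds$ (Lemma \ref{l7}), $\mathbb{E}\int\|Y^{\epsilon,\alpha,\phi^\epsilon}_s-\widehat{Y}^{\epsilon,\alpha}_s\|_{H_2}^2ds$ (Lemma \ref{l9}), and $\int\|\bar X^{\phi^\epsilon}_s-\bar X^{\phi^\epsilon}_{s(\delta)}\|_{H_1}^2ds$ (Lemma \ref{l10}).

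The remaining term $\int_0^{t\wedge\widetilde{\tau}_N^\epsilon}\langle F_1(X^{\epsilon,\alpha,\phi^\epsilon}_{s(\delta)},\widehat{Y}^{\epsilon,\alpha}_s)-\bar F_1(X^{\epsilon,\alpha,\phi^\epsilon}_{s(\delta)}),\Xi_s\rangle_{H_1}ds$ is where Khasminskii's averaging enters, and is the hardest point. I first replace $\Xi_s$ by $\Xi_{s(\delta)}$ (the error being of the type already handled, using that $\|F_1(X^{\epsilon,\alpha,\phi^\epsilon}_{s(\delta)},\widehat{Y}^{\epsilon,\alpha}_s)-\bar F_1(X^{\epsilon,\alpha,\phi^\epsilon}_{s(\delta)})\|_{H_1}$ has bounded second moment, by the linear growth of $F_1,\bar F_1$ and the energy bounds of Lemmas \ref{l6} and \ref{l8}, together with Lemmas \ref{l7} and \ref{l10}). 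Then on each subinterval $[k\delta,(k+1)\delta]$, by (\ref{e7}) the process $\widehat{Y}^{\epsilon,\alpha}$ solves the frozen equation (\ref{e3}) with the $\mathscr{F}_{k\delta}$-measurable frozen value $X^{\epsilon,\alpha,\phi^\epsilon}_{k\delta}$ and time sped up by $1/\alpha$; conditioning on $\mathscr{F}_{k\delta}$ and applying the exponential ergodicity estimate (\ref{er}) to the Lipschitz function $y\mapsto F_1(X^{\epsilon,\alpha,\phi^\epsilon}_{k\delta},y)$ yields, for $s\in[k\delta,(k+1)\delta]$,
\[
\Big\|\mathbb{E}\big[F_1(X^{\epsilon,\alpha,\phi^\epsilon}_{k\delta},\widehat{Y}^{\epsilon,\alpha}_s)\,\big|\,\mathscr{F}_{k\delta}\big]-\bar F_1(X^{\epsilon,\alpha,\phi^\epsilon}_{k\delta})\Big\|_{H_1}\le C\big(1+\|X^{\epsilon,\alpha,\phi^\epsilon}_{k\delta}\|_{H_1}+\|\widehat{Y}^{\epsilon,\alpha}_{k\delta}\|_{H_2}\big)e^{-\kappa(s-k\delta)/(2\alpha)}.
\]
Pairing with the $\mathscr{F}_{k\delta}$-measurable $\Xi_{k\delta}$, integrating in $s$ (factor $\int_{k\delta}^{(k+1)\delta}e^{-\kappa(s-k\delta)/(2\alpha)}ds\le 2\alpha/\kappa$), summing over the subintervals of $[0,T]$ and using once more Lemmas \ref{l6} and \ref{l8} with Cauchy--Schwarz, bounds this term by $C_{N,T,M}(1+\|x\|_{H_1}^2+\|y\|_{H_2}^2)\,\alpha/\delta$; the supremum over $t$ is reduced to the grid $\{k\delta\}$ plus a single sub-$\delta$ tail, estimated by Cauchy--Schwarz and the $L^2$-bound on $F_1$. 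Altogether the forcing terms entering the Gronwall inequality --- whose exponential constant $\exp(\int_0^T(K+\rho(\bar X^{\phi^\epsilon}_s)+C\|\phi^\epsilon_s\|_U)ds)$ is bounded by some $C_{N,M,T}$ --- are of order $\alpha/\delta+\delta^{1/2}+\alpha/\epsilon+\epsilon$, which by (\ref{h5}) and the choice $\delta=\sqrt{\alpha}$ tends to $0$; this completes Step 1, and hence the proof. The genuinely delicate point throughout is this averaging estimate: getting the conditioning, the frozen‑equation interpretation of $\widehat{Y}^{\epsilon,\alpha}$ on each block, and the summation over $\mathcal O(T/\delta)$ blocks to combine into the single factor $\alpha/\delta$ without losing the gain.
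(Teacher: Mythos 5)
Your overall architecture is a legitimate alternative to the paper's: instead of comparing $X^{\epsilon,\alpha,\phi^\epsilon}$ directly with $\bar X^{\phi}$ (which forces the paper to handle the term $\int_0^t\langle G_1(\bar X^{\phi}_s)(\phi^\epsilon_s-\phi_s),\widetilde Z^\epsilon_s\rangle_{H_1}ds$ via Skorokhod representation, time discretization and compactness of the Hilbert--Schmidt operator $G_1(\bar X^{\phi}_{k\delta})$), you compare with the random skeleton $\bar X^{\phi^\epsilon}$ driven by the \emph{same} control, so that cross term disappears, and you then invoke the weak-to-strong continuity of $\psi\mapsto\bar X^{\psi}$ on $S_M$ together with the continuous mapping theorem. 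That continuity is exactly what the paper establishes in its compactness argument (Theorem \ref{t5}), which does not rely on Theorem \ref{t4}, so there is no circularity; this two-step scheme is coherent and in some respects cleaner. The localisation, the treatment of the operator and $G_1$-control terms, and the chain of $F_1$-replacements via $\widehat Y^{\epsilon,\alpha}$, Lemmas \ref{l7}, \ref{l9}, \ref{l10} all match the paper's estimates and are fine.

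There is, however, a genuine gap at the point you yourself single out as the hardest: the Khasminskii term $\mathbb{E}\big[\sup_{t\le T\wedge\widetilde\tau_N^\epsilon}\big|\int_0^t\langle F_1(X^{\epsilon,\alpha,\phi^\epsilon}_{s(\delta)},\widehat Y^{\epsilon,\alpha}_s)-\bar F_1(X^{\epsilon,\alpha,\phi^\epsilon}_{s(\delta)}),\Xi_{s(\delta)}\rangle_{H_1}ds\big|\big]$. Your bound rests on the conditional ergodicity estimate $\big\|\mathbb{E}[F_1(X_{k\delta},\widehat Y^{\epsilon,\alpha}_s)\mid\mathscr{F}_{k\delta}]-\bar F_1(X_{k\delta})\big\|_{H_1}\le C(1+\cdots)e^{-\kappa(s-k\delta)/(2\alpha)}$, which is correct, but it only controls the \emph{conditional mean} of the integrand. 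The quantity fed into Gronwall carries a supremum over $t$, hence (block by block) an absolute value outside the $ds$-integral, and $\mathbb{E}|Z|$ cannot be bounded by $\|\mathbb{E}[Z\mid\mathscr{F}_{k\delta}]\|$ --- conditional Jensen goes the wrong way. Concretely, writing the integrand as its conditional mean plus a conditionally centered fluctuation, your argument handles the first piece (and indeed gives $O(\alpha/\delta)$ for it), but says nothing about the fluctuation, whose block integrals form martingale increments and require a \emph{second-moment} estimate; this is precisely what the paper's double time-integral $\Psi_k(s,r)$ computation does, using the Markov property of the frozen process and (\ref{er}) at time lag $s-r$, and it produces a bound of order $(\alpha/\delta)^{1/2}$ per the whole sum, not $\alpha/\delta$. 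The route is repairable --- insert the paper's $\Psi_k(s,r)$ argument (or a Doob/BDG bound on the centered block sums), and with $\delta=\sqrt{\alpha}$ the resulting rate $(\alpha/\delta)^{1/2}=\alpha^{1/4}$ still vanishes under (\ref{h5}) --- but as written your claimed $C_{N,T,M}\,\alpha/\delta$ bound for this term is not justified.
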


\begin{proof}
We  separate the proof into four steps to prove the convergence of solutions of Eq.~(\ref{e1}) to the solution of Eq.~(\ref{e2}) in probability, which implies the convergence in distribution as $\epsilon\to 0$.

\textbf{Step 1}: Denote $\widetilde{Z}^{\epsilon}_t:=X^{\epsilon,\alpha,\phi^\epsilon}_t-\bar{X}^{\phi}_t$, which satisfies the following SPDE
\begin{eqnarray*}
\left\{ \begin{aligned}
d\widetilde{Z}^{\epsilon}_t=&\big[A(X^{\epsilon,\alpha,\phi^\epsilon}_t)-A(\bar{X}^{\phi}_t)\big]dt
+\big[F_1(X^{\epsilon,\alpha,\phi^\epsilon}_t,Y^{\epsilon,\alpha,\phi^\epsilon}_t)-\bar{F}_1(\bar{X}^{\phi}_t)\big]dt\\
&+\big[G_1(X^{\epsilon,\alpha,\phi^\epsilon}_t)\phi^\epsilon_t-G_1(\bar{X}^{\phi}_t)\phi_t\big]dt
+\sqrt{\epsilon}G_1(X^{\epsilon,\alpha,\phi^\epsilon}_t)dW_t,\\
\widetilde{Z}^{\epsilon}_0=&0.
\end{aligned}\right.
\end{eqnarray*}
Applying It\^{o}'s formula to $\|\widetilde{Z}^{\epsilon}_t\|_{H_1}^2$ we obtain
\begin{eqnarray*}
\|\widetilde{Z}^{\epsilon}_t\|_{H_1}^2=\!\!\!\!\!\!\!\!&&2\int_0^t{}_{V_1^*}\langle A(X^{\epsilon,\alpha,\phi^\epsilon}_s)-A(\bar{X}^{\phi}_s),\widetilde{Z}^{\epsilon}_s\rangle_{V_1}ds+2\int_0^t\langle F_1(X^{\epsilon,\alpha,\phi^\epsilon}_s,Y^{\epsilon,\alpha,\phi^\epsilon}_s)-\bar{F}_1(\bar{X}^{\phi}_s),\widetilde{Z}^{\epsilon}_s\rangle_{H_1}ds
\nonumber\\
\!\!\!\!\!\!\!\!&&+2\int_0^t\langle G_1(X^{\epsilon,\alpha,\phi^\epsilon}_s)\phi^\epsilon_s-G_1(\bar{X}^{\phi}_s)\phi_s,\widetilde{Z}^{\epsilon}_s\rangle_{H_1}ds+\epsilon\int_0^t\|G_1(X^{\epsilon,\alpha,\phi^\epsilon}_s)\|_{L_2(U,H_1)}^2ds
\nonumber\\
\!\!\!\!\!\!\!\!&&+2\sqrt{\epsilon}\int_0^t\langle \widetilde{Z}^{\epsilon}_s, G_1(X^{\epsilon,\alpha,\phi^\epsilon}_s)dW_s\rangle_{H_1}.
\end{eqnarray*}
Then it is easy to get that
\begin{eqnarray}\label{34}
\|\widetilde{Z}^{\epsilon}_t\|_{H_1}^2=\!\!\!\!\!\!\!\!&&2\int_0^t{}_{V_1^*}\langle A(X^{\epsilon,\alpha,\phi^\epsilon}_s)-A(\bar{X}^{\phi}_s),\widetilde{Z}^{\epsilon}_s\rangle_{V_1}ds
+2\int_0^t\langle\bar{F}_1(X^{\epsilon,\alpha,\phi^\epsilon}_s)-\bar{F}_1(\bar{X}^{\phi}_s),\widetilde{Z}^{\epsilon}_s\rangle_{H_1}ds
\nonumber\\
\!\!\!\!\!\!\!\!&&+2\int_0^t\langle F_1(X^{\epsilon,\alpha,\phi^\epsilon}_s,Y^{\epsilon,\alpha,\phi^\epsilon}_s)-\bar{F}_1(X^{\epsilon,\alpha,\phi^\epsilon}_s)
-F_1(X^{\epsilon,\alpha,\phi^\epsilon}_{s(\delta)},\widehat{Y}^{\epsilon,\alpha}_s)+\bar{F}_1(X^{\epsilon,\alpha,\phi^\epsilon}_{s(\delta)}),\widetilde{Z}^{\epsilon}_s\rangle_{H_1}ds
\nonumber\\
\!\!\!\!\!\!\!\!&&+2\int_0^t\langle F_1(X^{\epsilon,\alpha,\phi^\epsilon}_{s(\delta)},\widehat{Y}^{\epsilon,\alpha}_s)-\bar{F}_1(X^{\epsilon,\alpha,\phi^\epsilon}_{s(\delta)}),\widetilde{Z}^{\epsilon}_s-\widetilde{Z}^{\epsilon}_{s(\delta)}\rangle_{H_1}ds
\nonumber\\
\!\!\!\!\!\!\!\!&&+2\int_0^t\langle F_1(X^{\epsilon,\alpha,\phi^\epsilon}_{s(\delta)},\widehat{Y}^{\epsilon,\alpha}_s)-\bar{F}_1(X^{\epsilon,\alpha,\phi^\epsilon}_{s(\delta)}),\widetilde{Z}^{\epsilon}_{s(\delta)}\rangle_{H_1}ds
\nonumber\\
\!\!\!\!\!\!\!\!&&+2\int_0^t\langle \big[G_1(X^{\epsilon,\alpha,\phi^\epsilon}_s)-G_1(\bar{X}^{\phi}_s)\big]\phi^\epsilon_s,\widetilde{Z}^{\epsilon}_s\rangle_{H_1}ds
+2\int_0^t\langle G_1(\bar{X}^{\phi}_s)(\phi^\epsilon_s-\phi_s),\widetilde{Z}^{\epsilon}_s\rangle_{H_1}ds
\nonumber\\
\!\!\!\!\!\!\!\!&&+\epsilon\int_0^t\|G_1(X^{\epsilon,\alpha,\phi^\epsilon}_s)\|_{L_2(U,H_1)}^2ds+2\sqrt{\epsilon}\int_0^t\langle \widetilde{Z}^{\epsilon}_s, G_1(X^{\epsilon,\alpha,\phi^\epsilon}_s)dW_s\rangle_{H_1}
\nonumber\\
=:\!\!\!\!\!\!\!\!&&\sum_{i=1}^9 I_i(t).
\end{eqnarray}
Taking $({\mathbf{A}}{\mathbf{2}})$ and Young's inequality into account we have
\begin{eqnarray}\label{29}
I_1(t)+I_6(t)\leq\!\!\!\!\!\!\!\!&&\int_0^t2{}_{V_1^*}\langle A(X^{\epsilon,\alpha,\phi^\epsilon}_s)-A(\bar{X}^{\phi}_s),\widetilde{Z}^{\epsilon}_s\rangle_{V_1}+\|G_1(X^{\epsilon,\alpha,\phi^\epsilon}_s)-G_1(\bar{X}^{\phi}_s)\|_{L_2(U,H_1)}^2ds
\nonumber\\
\!\!\!\!\!\!\!\!&&+\int_0^t\|\phi^\epsilon_s\|_U^2\|\widetilde{Z}^{\epsilon}_s\|_{H_1}^2ds
\nonumber\\
\leq\!\!\!\!\!\!\!\!&&-\theta_1\int_0^t\|\widetilde{Z}^{\epsilon}_s\|_{V_1}^{\gamma_1}ds+\int_0^t\big (K+\rho(X^{\epsilon,\alpha,\phi^\epsilon}_s)\big)\|\widetilde{Z}^{\epsilon}_s\|_{H_1}^2ds+\int_0^t\|\phi^\epsilon_s\|_U^2\|\widetilde{Z}^{\epsilon}_s\|_{H_1}^2ds.~~~~~
\end{eqnarray}
Since $\bar{F}_1$ is Lipschitz continuous,  following from the proof of Lemma \ref{l5} we have
\begin{eqnarray}\label{30}
I_2(t)\leq2\int_0^t\|\bar{F}_1(X^{\epsilon,\alpha,\phi^\epsilon}_s)-\bar{F}_1(\bar{X}^{\phi}_s)\|_{H_1}\|\widetilde{Z}^{\epsilon}_s\|_{H_1}ds\leq C\int_0^t\|\widetilde{Z}^{\epsilon}_s\|_{H_1}^2ds.
\end{eqnarray}
Similarly, by $({\mathbf{A}}{\mathbf{2}})$ and Young's inequality, it leads to
\begin{eqnarray}\label{31}
I_3(t)\leq\int_0^t\|\widetilde{Z}^{\epsilon}_s\|_{H_1}^2ds+C\int_0^t\Big(\|X^{\epsilon,\alpha,\phi^\epsilon}_s-X^{\epsilon,\alpha,\phi^\epsilon}_{s(\delta)}\|_{H_1}^2
+\|Y^{\epsilon,\alpha,\phi^\epsilon}_s-\widehat{Y}^{\epsilon,\alpha}_s\|_{H_2}^2\Big)ds.
\end{eqnarray}
Making use of Young's inequality and H\"{o}lder's inequality yields that
\begin{eqnarray}\label{32}
I_4(t)\leq\!\!\!\!\!\!\!\!&&2\int_0^t\|F_1(X^{\epsilon,\alpha,\phi^\epsilon}_{s(\delta)},\widehat{Y}^{\epsilon,\alpha}_s)-\bar{F}_1(X^{\epsilon,\alpha,\phi^\epsilon}_{s(\delta)})\|_{H_1}\|\widetilde{Z}^{\epsilon}_s-\widetilde{Z}^{\epsilon}_{s(\delta)}\|_{H_1}ds
\nonumber\\
\leq\!\!\!\!\!\!\!\!&&4\Big[\int_0^t\Big(\|F_1(X^{\epsilon,\alpha,\phi^\epsilon}_{s(\delta)},\widehat{Y}^{\epsilon,\alpha}_s)\|_{H_1}^2+\|\bar{F}_1(X^{\epsilon,\alpha,\phi^\epsilon}_{s(\delta)})\|_{H_1}^2\Big)ds\Big]^{\frac{1}{2}}
\nonumber\\
\!\!\!\!\!\!\!\!&&\cdot\Big[\int_0^t\Big(\|X^{\epsilon,\alpha,\phi^\epsilon}_s-X^{\epsilon,\alpha,\phi^\epsilon}_{s(\delta)}\|_{H_1}^2+\|\bar{X}^{\phi}_s-\bar{X}^{\phi}_{s(\delta)}\|_{H_1}^2\Big)ds\Big]^{\frac{1}{2}}
\nonumber\\
\leq\!\!\!\!\!\!\!\!&&C\Big[\int_0^t\Big(1+\|X^{\epsilon,\alpha,\phi^\epsilon}_{s(\delta)}\|_{H_1}^2+\|\widehat{Y}^{\epsilon,\alpha}_s\|_{H_1}^2\Big)ds\Big]^{\frac{1}{2}}
\nonumber\\
\!\!\!\!\!\!\!\!&&\cdot\Big[\int_0^t\Big(\|X^{\epsilon,\alpha,\phi^\epsilon}_s-X^{\epsilon,\alpha,\phi^\epsilon}_{s(\delta)}\|_{H_1}^2+\|\bar{X}^{\phi}_s-\bar{X}^{\phi}_{s(\delta)}\|_{H_1}^2\Big)ds\Big]^{\frac{1}{2}}.
\end{eqnarray}
Substituting (\ref{29})-(\ref{32}) into (\ref{34}) and then we have
\begin{eqnarray}\label{66}
\!\!\!\!\!\!\!\!&&\|\widetilde{Z}^{\epsilon}_t\|_{H_1}^2+\theta_1\int_0^t\|\widetilde{Z}^{\epsilon}_s\|_{V_1}^{\gamma_1}ds
\nonumber\\
~\leq\!\!\!\!\!\!\!\!&&C\int_0^t\big(1+\rho(X^{\epsilon,\alpha,\phi^\epsilon}_s)+\|\phi^\epsilon_s\|_U^2\big)\|\widetilde{Z}^{\epsilon}_s\|_{H_1}^2ds+C\epsilon\int_0^t\big(1+\|\bar{X}^{\phi}_s\|_{H_1}^2\big)ds
\nonumber\\
\!\!\!\!\!\!\!\!&&+C\int_0^t\Big(\|X^{\epsilon,\alpha,\phi^\epsilon}_s-X^{\epsilon,\alpha,\phi^\epsilon}_{s(\delta)}\|_{H_1}^2
+\|Y^{\epsilon,\alpha,\phi^\epsilon}_s-\widehat{Y}^{\epsilon,\alpha}_s\|_{H_2}^2\Big)ds
\nonumber\\
\!\!\!\!\!\!\!\!&&+
C\Big[\int_0^t\Big(1+\|X^{\epsilon,\alpha,\phi^\epsilon}_{s(\delta)}\|_{H_1}^2+\|\widehat{Y}^{\epsilon,\alpha}_s\|_{H_1}^2\Big)ds\Big]^{\frac{1}{2}}
\nonumber\\
\!\!\!\!\!\!\!\!&&~~~\cdot
\Big[\int_0^t\Big(\|X^{\epsilon,\alpha,\phi^\epsilon}_s-X^{\epsilon,\alpha,\phi^\epsilon}_{s(\delta)}\|_{H_1}^2+\|\bar{X}^{\phi}_s-\bar{X}^{\phi}_{s(\delta)}\|_{H_1}^2\Big)ds\Big]^{\frac{1}{2}}
\nonumber\\
\!\!\!\!\!\!\!\!&&+2\int_0^t\langle G_1(\bar{X}^{\phi}_s)(\phi^\epsilon_s-\phi_s),\widetilde{Z}^{\epsilon}_s\rangle_{H_1}ds+2\sqrt{\epsilon}\int_0^t\langle \widetilde{Z}^{\epsilon}_s, G_1(X^{\epsilon,\alpha,\phi^\epsilon}_s)dW_s\rangle_{H_1}
\nonumber\\
\!\!\!\!\!\!\!\!&&+\epsilon\int_0^t\|G_1(X^{\epsilon,\alpha,\phi^\epsilon}_s)\|_{L_2(U,H_1)}^2ds+2\int_0^t\langle F_1(X^{\epsilon,\alpha,\phi^\epsilon}_{s(\delta)},\widehat{Y}^{\epsilon,\alpha}_s)-\bar{F}_1(X^{\epsilon,\alpha,\phi^\epsilon}_{s(\delta)}),\widetilde{Z}^{\epsilon}_{s(\delta)}\rangle_{H_1}ds.~~~~
\end{eqnarray}
Applying Gronwall's lemma to (\ref{66}) and  using the definition of $\widetilde{\tau}_N^\epsilon$, it follows that
\begin{eqnarray*}
\!\!\!\!\!\!\!\!&&\sup_{t\in[0,T\wedge\widetilde{\tau}_N^\epsilon]}\|\widetilde{Z}^{\epsilon}_t\|_{H_1}^2+\theta_1\int_0^{T\wedge\widetilde{\tau}_N^\epsilon}\|\widetilde{Z}^{\epsilon}_t\|_{V_1}^{\gamma_1}dt
\nonumber\\
~\leq\!\!\!\!\!\!\!\!&&C_N\Big\{\epsilon\int_0^T\big(1+\|\bar{X}^{\phi}_t\|_{H_1}^2\big)dt+\int_0^{T\wedge\widetilde{\tau}_N^\epsilon}\Big(\|X^{\epsilon,\alpha,\phi^\epsilon}_t-X^{\epsilon,\alpha,\phi^\epsilon}_{t(\delta)}\|_{H_1}^2
+\|Y^{\epsilon,\alpha,\phi^\epsilon}_t-\widehat{Y}^{\epsilon,\alpha}_t\|_{H_2}^2\Big)dt
\nonumber\\
\!\!\!\!\!\!\!\!&&+\epsilon\int_0^{T\wedge\widetilde{\tau}_N^\epsilon}\|G_1(X^{\epsilon,\alpha,\phi^\epsilon}_t)\|_{L_2(U,H_1)}^2dt+
\Big[\int_0^T\Big(1+\|X^{\epsilon,\alpha,\phi^\epsilon}_{t(\delta)}\|_{H_1}^2+\|\widehat{Y}^{\epsilon,\alpha}_t\|_{H_1}^2\Big)dt\Big]^{\frac{1}{2}}
\nonumber\\
\!\!\!\!\!\!\!\!&&~~\cdot\Big[\int_0^{T\wedge\widetilde{\tau}_N^\epsilon}\|X^{\epsilon,\alpha,\phi^\epsilon}_t-X^{\epsilon,\alpha,\phi^\epsilon}_{t(\delta)}\|_{H_1}^2dt+\int_0^T\|\bar{X}^{\phi}_t-\bar{X}^{\phi}_{t(\delta)}\|_{H_1}^2dt\Big]^{\frac{1}{2}}
\nonumber\\
\!\!\!\!\!\!\!\!&&+\sup_{t\in[0,T\wedge\widetilde{\tau}_N^\epsilon]}\Big|\int_0^{t}\langle G_1(\bar{X}^{\phi}_s)(\phi^\epsilon_s-\phi_s),\widetilde{Z}^{\epsilon}_s\rangle_{H_1}ds\Big|+\sqrt{\epsilon}\sup_{t\in[0,T\wedge\widetilde{\tau}_N^\epsilon]}\Big|\int_0^t\langle \widetilde{Z}^{\epsilon}_t, G_1(X^{\epsilon,\alpha,\phi^\epsilon}_t)dW_t\rangle_{H_1}\Big|
\nonumber\\
\!\!\!\!\!\!\!\!&&+\sup_{t\in[0,T\wedge\widetilde{\tau}_N^\epsilon]}\Big|\int_0^t\langle F_1(X^{\epsilon,\alpha,\phi^\epsilon}_{s(\delta)},\widehat{Y}^{\epsilon,\alpha}_s)-\bar{F}_1(X^{\epsilon,\alpha,\phi^\epsilon}_{s(\delta)}),\widetilde{Z}^{\epsilon}_{s(\delta)}\rangle_{H_1}ds\Big|\Big\}
\cdot \exp\Big(\int_0^{T}\|\phi^\epsilon_t\|_U^2dt\Big).~~~~~~~
\end{eqnarray*}
Taking expectation for the above inequality and using Lemma \ref{l7}, \ref{l8}, \ref{l9} and \ref{l10} yields that
\begin{eqnarray}\label{35}
\!\!\!\!\!\!\!\!&&\mathbb{E}\Big[\sup_{t\in[0,T\wedge\widetilde{\tau}_N^\epsilon]}\|\widetilde{Z}^{\epsilon}_t\|_{H_1}^2\Big]+\theta_1\mathbb{E}\int_0^{T\wedge\widetilde{\tau}_N^\epsilon}\|\widetilde{Z}^{\epsilon}_t\|_{V_1}^{\gamma_1}dt
\nonumber\\
~\leq\!\!\!\!\!\!\!\!&&C_{N,M,T}\Big\{\epsilon\sup_{t\in[0,T]}\big(1+\|\bar{X}^{\phi}_t\|_{H_1}^2\big)+(1+\|x\|_{H_1}^2+\|y\|_{H_2}^2)\Big[\Big(\frac{\alpha}{\epsilon}\Big)+\delta^{\frac{1}{4}}\Big]
\nonumber\\
\!\!\!\!\!\!\!\!&&+\mathbb{E}\Big[\sup_{t\in[0,T\wedge\widetilde{\tau}_N^\epsilon]}\Big|\int_0^{t}\langle G_1(\bar{X}^{\phi}_s)(\phi^\epsilon_s-\phi_s),\widetilde{Z}^{\epsilon}_s\rangle_{H_1}ds\Big|\Big]
\nonumber\\
\!\!\!\!\!\!\!\!&&
+\sqrt{\epsilon}\mathbb{E}\Big[\sup_{t\in[0,T\wedge\widetilde{\tau}_N^\epsilon]}\Big|\int_0^t\langle \widetilde{Z}^{\epsilon}_t, G_1(X^{\epsilon,\alpha,\phi^\epsilon}_t)dW_t\rangle_{H_1}\Big|\Big]+\epsilon\mathbb{E}\int_0^{T\wedge\widetilde{\tau}_N^\epsilon}\|G_1(X^{\epsilon,\alpha,\phi^\epsilon}_t)\|_{L_2(U,H_1)}^2dt
\nonumber\\
\!\!\!\!\!\!\!\!&&+\mathbb{E}\Big[\sup_{t\in[0,T\wedge\widetilde{\tau}_N^\epsilon]}\Big|\int_0^t\langle F_1(X^{\epsilon,\alpha,\phi^\epsilon}_{s(\delta)},\widehat{Y}^{\epsilon,\alpha}_s)-\bar{F}_1(X^{\epsilon,\alpha,\phi^\epsilon}_{s(\delta)}),\widetilde{Z}^{\epsilon}_{s(\delta)}\rangle_{H_1}ds\Big|\Big]\Big\}.
\end{eqnarray}
Making use of Burkholder-Davis-Gundy's inequality and Young's inequality implies that
\begin{eqnarray}\label{36}
\!\!\!\!\!\!\!\!&&C_{N,M,T}\Big\{\sqrt{\epsilon}\mathbb{E}\Big[\sup_{t\in[0,T\wedge\widetilde{\tau}_N^\epsilon]}\Big|\int_0^t\langle \widetilde{Z}^{\epsilon}_t, G_1(X^{\epsilon,\alpha,\phi^\epsilon}_t)dW_t\rangle_{H_1}\Big|\Big]+\epsilon\mathbb{E}\int_0^{T\wedge\widetilde{\tau}_N^\epsilon}\|G_1(X^{\epsilon,\alpha,\phi^\epsilon}_t)\|_{L_2(U,H_1)}^2dt\Big\}
\nonumber\\
\leq\!\!\!\!\!\!\!\!&&C_{N,M,T}\sqrt{\epsilon}\mathbb{E}\Big(\int_0^{T\wedge\widetilde{\tau}_N^\epsilon}\|\widetilde{Z}^{\epsilon}_t\|_{H_1}^2\|G_1(X^{\epsilon,\alpha,\phi^\epsilon}_t)\|_{L_2(U,H_1)}^2dt\Big)^{\frac{1}{2}}
\nonumber\\
\!\!\!\!\!\!\!\!&&+C_{N,M,T}\epsilon\mathbb{E}\int_0^{T\wedge\widetilde{\tau}_N^\epsilon}\|G_1(X^{\epsilon,\alpha,\phi^\epsilon}_t)\|_{L_2(U,H_1)}^2dt
\nonumber\\
\leq\!\!\!\!\!\!\!\!&&C_{N,M,T}\sqrt{\epsilon}\mathbb{E}\Big(\sup_{t\in[0,T\wedge\widetilde{\tau}_N^\epsilon]}\|\widetilde{Z}^{\epsilon}_t\|_{H_1}^2\cdot\int_0^{T\wedge\widetilde{\tau}_N^\epsilon}\|G_1(X^{\epsilon,\alpha,\phi^\epsilon}_t)\|_{L_2(U,H_1)}^2dt\Big)^{\frac{1}{2}}
\nonumber\\
\!\!\!\!\!\!\!\!&&+C_{N,M,T}\epsilon\mathbb{E}\int_0^{T\wedge\widetilde{\tau}_N^\epsilon}\|G_1(X^{\epsilon,\alpha,\phi^\epsilon}_t)\|_{L_2(U,H_1)}^2dt
\nonumber\\
\leq\!\!\!\!\!\!\!\!&&\frac{1}{2}\mathbb{E}\Big[\sup_{t\in[0,T\wedge\widetilde{\tau}_N^\epsilon]}\|\widetilde{Z}^{\epsilon}_t\|_{H_1}^2\Big]+C_{N,M,T}\epsilon\mathbb{E}\int_0^{T\wedge\widetilde{\tau}_N^\epsilon}(1+\|X^{\epsilon,\alpha,\phi^\epsilon}_t\|_{H_1}^2)dt
\nonumber\\
\leq\!\!\!\!\!\!\!\!&&\frac{1}{2}\mathbb{E}\Big[\sup_{t\in[0,T\wedge\widetilde{\tau}_N^\epsilon]}\|\widetilde{Z}^{\epsilon}_t\|_{H_1}^2\Big]+C_{N,M,T}\epsilon.
\end{eqnarray}
Therefore, substituting (\ref{36}) into (\ref{35}) leads to
\begin{eqnarray}\label{37}
\!\!\!\!\!\!\!\!&&\mathbb{E}\Big[\sup_{t\in[0,T\wedge\widetilde{\tau}_N^\epsilon]}\|\widetilde{Z}^{\epsilon}_t\|_{H_1}^2\Big]+2\theta_1\mathbb{E}\int_0^{T\wedge\widetilde{\tau}_N^\epsilon}\|\widetilde{Z}^{\epsilon}_t\|_{V_1}^{\gamma_1}dt
\nonumber\\
~\leq\!\!\!\!\!\!\!\!&&C_{N,M,T}\Big\{\epsilon\sup_{t\in[0,T]}\big(1+\|\bar{X}^{\phi}_t\|_{H_1}^2\big)+(1+\|x\|_{H_1}^2+\|y\|_{H_2}^2)\Big[\Big(\frac{\alpha}{\epsilon}\Big)+\delta^{\frac{1}{4}}\Big]
\nonumber\\
\!\!\!\!\!\!\!\!&&+\mathbb{E}\Big[\sup_{t\in[0,T\wedge\widetilde{\tau}_N^\epsilon]}\Big|\int_0^{t}\langle G_1(\bar{X}^{\phi}_s)(\phi^\epsilon_s-\phi_s),\widetilde{Z}^{\epsilon}_s\rangle_{H_1}ds\Big|\Big]
\nonumber\\
\!\!\!\!\!\!\!\!&&+\mathbb{E}\Big[\sup_{t\in[0,T\wedge\widetilde{\tau}_N^\epsilon]}\Big|\int_0^t\langle F_1(X^{\epsilon,\alpha,\phi^\epsilon}_{s(\delta)},\widehat{Y}^{\epsilon,\alpha}_s)-\bar{F}_1(X^{\epsilon,\alpha,\phi^\epsilon}_{s(\delta)}),\widetilde{Z}^{\epsilon}_{s(\delta)}\rangle_{H_1}ds\Big|\Big]\Big\}.
\end{eqnarray}

\textbf{Step 2}: In this step, we aim to estimate the term $\mathbb{E}\Big[\sup_{t\in[0,T\wedge\widetilde{\tau}_N^\epsilon]}\Big|\int_0^{t}\langle G_1(\bar{X}^{\phi}_s)(\phi^\epsilon_s-\phi_s),\widetilde{Z}^{\epsilon}_s\rangle_{H_1}ds\Big|\Big]$ in (\ref{37}). First it is easy to see that
\begin{equation}\label{38}
\mathbb{E}\Big[\sup_{t\in[0,T\wedge\widetilde{\tau}_N^\epsilon]}\Big|\int_0^{t}\langle G_1(\bar{X}^{\phi}_s)(\phi^\epsilon_s-\phi_s),\widetilde{Z}^{\epsilon}_s\rangle_{H_1}ds\Big|\Big]\leq \sum_{i=1}^3\widetilde{I}_i(N,\epsilon)+\mathbb{E}\big(\widehat{I}_4(N,\epsilon)\big),
\end{equation}
where we denote
\begin{eqnarray*}
\widetilde{I}_1(N,\epsilon):=\!\!\!\!\!\!\!\!&&\mathbb{E}\Big[\sup_{t\in[0,T\wedge\widetilde{\tau}_N^\epsilon]}\Big|\int_0^{t}\langle G_1(\bar{X}^{\phi}_s)(\phi^\epsilon_s-\phi_s),\widetilde{Z}^{\epsilon}_s-\widetilde{Z}^{\epsilon}_{s(\delta)}\rangle_{H_1}ds\Big|\Big],
\nonumber\\
\widetilde{I}_2(N,\epsilon):=\!\!\!\!\!\!\!\!&&\mathbb{E}\Big[\sup_{t\in[0,T\wedge\widetilde{\tau}_N^\epsilon]}\Big|\int_0^{t}\langle \big(G_1(\bar{X}^{\phi}_s)-G_1(\bar{X}^{\phi}_{s(\delta)})\big)(\phi^\epsilon_s-\phi_s),\widetilde{Z}^{\epsilon}_{s(\delta)}\rangle_{H_1}ds\Big|\Big],
\nonumber\\
\widetilde{I}_3(N,\epsilon):=\!\!\!\!\!\!\!\!&&\mathbb{E}\Big[\sup_{t\in[0,T\wedge\widetilde{\tau}_N^\epsilon]}\Big|\int_{t(\delta)}^{t}\langle G_1(\bar{X}^{\phi}_{s(\delta)})(\phi^\epsilon_s-\phi_s),\widetilde{Z}^{\epsilon}_{s(\delta)}\rangle_{H_1}ds\Big|\Big],
\nonumber\\
\widehat{I}_4(N,\epsilon):=\!\!\!\!\!\!\!\!&&\sum_{k=0}^{[(T\wedge\widetilde{\tau}_N^\epsilon)/\delta]-1}\Big|\langle G_1(\bar{X}^{\phi}_{k\delta})\int_{k\delta}^{(k+1)\delta}(\phi^\epsilon_s-\phi_s)ds,\widetilde{Z}^{\epsilon}_{k\delta}\rangle_{H_1}\Big|.
\end{eqnarray*}
From Cauchy-Schwarz's inequality, condition $({\mathbf{A}}{\mathbf{2}})$, Lemma \ref{l7} and Lemma \ref{l10}, it follows that
\begin{eqnarray}\label{39}
\!\!\!\!\!\!\!\!&&\widetilde{I}_1(N,\epsilon)
\nonumber\\
~\leq\!\!\!\!\!\!\!\!&&\mathbb{E}\Big[\int_0^{T\wedge\widetilde{\tau}_N^\epsilon}\|G_1(\bar{X}^{\phi}_t)\|_{L_2(U,H_1)}\|\phi^\epsilon_t-\phi_t\|_U\|\widetilde{Z}^{\epsilon}_t -\widetilde{Z}^{\epsilon}_{t(\delta)}\|_{H_1}dt\Big]
\nonumber\\
~\leq\!\!\!\!\!\!\!\!&&\Big\{\mathbb{E}\Big[\int_0^{T\wedge\widetilde{\tau}_N^\epsilon}\|G_1(\bar{X}^{\phi}_t)\|_{L_2(U,H_1)}^2\|\phi^\epsilon_t-\phi_t\|_U^2dt\Big]\Big\}^{\frac{1}{2}}
\Big\{\mathbb{E}\Big[\int_0^{T\wedge\widetilde{\tau}_N^\epsilon}\|\widetilde{Z}^{\epsilon}_t -\widetilde{Z}^{\epsilon}_{t(\delta)}\|_{H_1}^2dt\Big]\Big\}^{\frac{1}{2}}
\nonumber\\
~\leq\!\!\!\!\!\!\!\!&&\Big\{\mathbb{E}\Big[\int_0^{T\wedge\widetilde{\tau}_N^\epsilon}C(1+\|\bar{X}^{\phi}_t\|_{H_1}^2)\|\phi^\epsilon_t-\phi_t\|_U^2dt\Big]\Big\}^{\frac{1}{2}}
\nonumber\\
\!\!\!\!\!\!\!\!&&\cdot
\Big\{\mathbb{E}\Big[\int_0^{T\wedge\widetilde{\tau}_N^\epsilon}2(\|\bar{X}^{\phi}_{t}-\bar{X}^{\phi}_{t(\delta)}\|_{H_1}^2+\|X^{\epsilon,\alpha,\phi^\epsilon}_{t}-X^{\epsilon,\alpha,\phi^\epsilon}_{t(\delta)}\|_{H_1}^2)dt\Big]\Big\}^{\frac{1}{2}}
\nonumber\\
~\leq\!\!\!\!\!\!\!\!&&C_N\delta^{\frac{1}{4}}(1+\|x\|_{H_1}+\|y\|_{H_2})\Big\{\mathbb{E}\Big[\int_0^{T}\|\phi^\epsilon_t-\phi_t\|_U^2dt\Big]\Big\}^{\frac{1}{2}}
\nonumber\\
~\leq\!\!\!\!\!\!\!\!&&C_{N,M}\delta^{\frac{1}{4}}(1+\|x\|_{H_1}+\|y\|_{H_2}).
\end{eqnarray}
By Lemma \ref{l10}, the second term can be controlled as follows
\begin{eqnarray}\label{40}
\!\!\!\!\!\!\!\!&&\widetilde{I}_2(N,\epsilon)
\nonumber\\
~\leq\!\!\!\!\!\!\!\!&&\mathbb{E}\Big[\int_0^{T\wedge\widetilde{\tau}_N^\epsilon} \|G_1(\bar{X}^{\phi}_t)-G_1(\bar{X}^{\phi}_{t(\delta)})\|_{L_2(U,H_1)}\|\phi^\epsilon_t-\phi_t\|_U\|\widetilde{Z}^{\epsilon}_{t(\delta)}\|_{H_1}dt\Big]
\nonumber\\
~\leq\!\!\!\!\!\!\!\!&&\Big\{\mathbb{E}\Big[\int_0^{T\wedge\widetilde{\tau}_N^\epsilon}\|\bar{X}^{\phi}_t-\bar{X}^{\phi}_{t(\delta)}\|_{H_1}^2\|X^{\epsilon,\alpha,\phi^\epsilon}_{t(\delta)}-\bar{X}^{\phi}_{t(\delta)}\|_{H_1}^2dt\Big]\Big\}^{\frac{1}{2}}\Big\{\mathbb{E}\Big[\int_0^{T}\|\phi^\epsilon_t-\phi_t\|_U^2dt\Big]\Big\}^{\frac{1}{2}}
\nonumber\\
~\leq\!\!\!\!\!\!\!\!&&C_{N,M}\delta^{\frac{1}{4}}(1+\|x\|_{H_1}+\|y\|_{H_2}).
\end{eqnarray}
Using H\"{o}lder's inequality twice, we obtain
\begin{eqnarray}\label{41}
\!\!\!\!\!\!\!\!&&\widetilde{I}_3(N,\epsilon)
\nonumber\\
~\leq\!\!\!\!\!\!\!\!&&\Big\{\mathbb{E}\sup_{t\in[0,T\wedge\widetilde{\tau}_N^\epsilon]}\Big|\int_{t(\delta)}^t\|G_1(\bar{X}^{\phi}_{s(\delta)})\|_{L_2(U,H_1)}\|\phi^\epsilon_s-\phi_s\|_Uds\Big|^2\Big\}^{\frac{1}{2}}
\nonumber\\
\!\!\!\!\!\!\!\!&&\cdot\Big\{\mathbb{E}\Big[\sup_{t\in[0,T\wedge\widetilde{\tau}_N^\epsilon]}\|X^{\epsilon,\alpha,\phi^\epsilon}_{t}-\bar{X}^{\phi}_{t}\|_{H_1}^2\Big]\Big\}^{\frac{1}{2}}
\nonumber\\
~\leq\!\!\!\!\!\!\!\!&&\delta^{\frac{1}{2}}\Big\{\mathbb{E}\int_{0}^{T\wedge\widetilde{\tau}_N^\epsilon}(1+\|\bar{X}^{\phi}_{s(\delta)}\|_{H_1}^2)\|\phi^\epsilon_s-\phi_s\|_U^2ds\Big\}^{\frac{1}{2}}
\Big\{\mathbb{E}\Big[\sup_{t\in[0,T\wedge\widetilde{\tau}_N^\epsilon]}\|X^{\epsilon,\alpha,\phi^\epsilon}_{t}-\bar{X}^{\phi}_{t}\|_{H_1}^2\Big]\Big\}^{\frac{1}{2}}
\nonumber\\
~\leq\!\!\!\!\!\!\!\!&&C_N\delta^{\frac{1}{2}}\Big\{\mathbb{E}\Big[\int_0^{T}\|\phi^\epsilon_t-\phi_t\|_U^2dt\Big]\Big\}^{\frac{1}{2}}
\nonumber\\
~\leq\!\!\!\!\!\!\!\!&&C_{N,M}\delta^{\frac{1}{2}}.
\end{eqnarray}

Now let us consider the convergence of  the last term $\widehat{I}_4(N,\epsilon)$. Since $\mathcal{A}_M$ is a Polish space and $\{\phi^\epsilon:\epsilon>0\}\subset\mathcal{A}_M$ converges to $\phi$ in distribution
as $S_M$-valued random elements, we are able to use the Skorokhod representation theorem to construct a probability space $\left(\widetilde{\Omega},\widetilde{\mathscr{F}},\widetilde{\mathscr{F}}_{t\geq0},\widetilde{\mathbb{P}}\right)$ and processes $(\widetilde{\phi}^\epsilon,\widetilde{\phi},\widetilde{W}^\epsilon)$ such that the joint distribution of $(\widetilde{\phi}^\epsilon,\widetilde{W}^\epsilon)$ is the same as $(\phi^\epsilon,W^\epsilon)$ and $\widetilde{\phi}^\epsilon\to\widetilde{\phi}$, $\widetilde{\mathbb{P}}$-a.s., in the weak topology of $S_M$, where $W^\epsilon$ is defined in (\ref{wiener}).
Therefore, for each $a,b\in[0,T]$,~$a<b$, the integral $\int_a^b\widetilde{\phi}_s^\epsilon ds\to\int_a^b\widetilde{\phi}_sds$ weakly in $U$. Without loss of generality, we will use the notations $\left(\Omega,\mathscr{F},\mathscr{F}_{t\geq0},\mathbb{P}\right)$ and $(\phi^\epsilon,\phi,W)$ replacing $\left(\widetilde{\Omega},\widetilde{\mathscr{F}},\widetilde{\mathscr{F}}_{t\geq0},\widetilde{\mathbb{P}}\right)$ and $(\widetilde{\phi}^\epsilon,\widetilde{\phi},\widetilde{W}^\epsilon)$, respectively.

Since $G_1(\bar{X}^{\phi}_{k\delta})$ is a Hilbert-Schmidt operator hence is compact operator, we infer that
$$\Big\|G_1(\bar{X}^{\phi}_{k\delta})\Big(\int_{k\delta}^{(k+1)\delta}\phi^\epsilon_sds-\int_{k\delta}^{(k+1)\delta}\phi_sds\Big)\Big\|_{H_1}\to 0,~~\text{as}~\epsilon\to 0,$$
which implies that  $\widehat{I}_4(N,\epsilon,\omega)\to 0$, $\mathbb{P}\text{-a.s.}$ as $\epsilon\to 0$. Furthermore, it is easy to see that for any fixed $N>0$, $\widehat{I}_4(N,\epsilon,\omega)\leq C_{N,M}$ by the similar arguments as in (\ref{40}), then the dominated convergence theorem yields that for any $N>0$,
\begin{eqnarray}\label{42}
\mathbb{E}\big(\widehat{I}_4(N,\epsilon)\big)\to 0,~\text{as}~\epsilon\to 0.
\end{eqnarray}
Finally, taking (\ref{39})-(\ref{42}) into (\ref{38}) account we conclude that for any fixed $N>0$,
\begin{eqnarray}\label{43}
\limsup_{\delta\to 0}\limsup_{\epsilon\to 0}\mathbb{E}\Big[\sup_{t\in[0,T\wedge\widetilde{\tau}_N^\epsilon]}\Big|\int_0^{t}\langle G_1(\bar{X}^{\phi}_s)(\phi^\epsilon_s-\phi_s),\widetilde{Z}^{\epsilon}_s\rangle_{H_1}ds\Big|\Big]=0.
\end{eqnarray}

\textbf{Step 3}: This step is devoted to investigating the term $\mathbb{E}\Big[\sup_{t\in[0,T\wedge\widetilde{\tau}_N^\epsilon]}\Big|\int_0^t\langle F_1(X^{\epsilon,\alpha,\phi^\epsilon}_{s(\delta)},\widehat{Y}^{\epsilon,\alpha}_s)-\bar{F}_1(X^{\epsilon,\alpha,\phi^\epsilon}_{s(\delta)}),\widetilde{Z}^{\epsilon}_{s(\delta)}\rangle_{H_1}ds\Big|\Big]\Big\}$ in (\ref{37}).
It is obvious that
\begin{eqnarray}\label{44}
\!\!\!\!\!\!\!\!&&\Big|\int_0^t\langle F_1(X^{\epsilon,\alpha,\phi^\epsilon}_{s(\delta)},\widehat{Y}^{\epsilon,\alpha}_s)-\bar{F}_1(X^{\epsilon,\alpha,\phi^\epsilon}_{s(\delta)}),\widetilde{Z}^{\epsilon}_{s(\delta)}\rangle_{H_1}ds\Big|
\nonumber\\
\leq\!\!\!\!\!\!\!\!&&\sum_{k=0}^{[t/\delta]-1}\Big|\int_{k\delta}^{(k+1)\delta}\langle F_1(X^{\epsilon,\alpha,\phi^\epsilon}_{s(\delta)},\widehat{Y}^{\epsilon,\alpha}_s)-\bar{F}_1(X^{\epsilon,\alpha,\phi^\epsilon}_{s(\delta)}),\widetilde{Z}^{\epsilon}_{s(\delta)}\rangle_{H_1}ds\Big|
\nonumber\\
\!\!\!\!\!\!\!\!&&+\Big|\int_{t(\delta)}^{t}\langle F_1(X^{\epsilon,\alpha,\phi^\epsilon}_{s(\delta)},\widehat{Y}^{\epsilon,\alpha}_s)-\bar{F}_1(X^{\epsilon,\alpha,\phi^\epsilon}_{s(\delta)}),\widetilde{Z}^{\epsilon}_{s(\delta)}\rangle_{H_1}ds\Big|
\nonumber\\
=:\!\!\!\!\!\!\!\!&&J_1(t)+J_2(t).
\end{eqnarray}
According to the condition $({\mathbf{A}}{\mathbf{2}})$, the Lipschitz continuity of $\bar{F}_1$ and the definition of $\widetilde{\tau}_N^\epsilon$, the term $J_2(t)$ can be controlled by
\begin{eqnarray}\label{45}
\!\!\!\!\!\!\!\!&&\mathbb{E}\Big[\sup_{t\in[0,T\wedge\widetilde{\tau}_N^\epsilon]}J_2(t)\Big]
\nonumber\\
\leq\!\!\!\!\!\!\!\!&&\Big[\mathbb{E}\Big(\sup_{t\in[0,T\wedge\widetilde{\tau}_N^\epsilon]}\|X^{\epsilon,\alpha,\phi^\epsilon}_t-\bar{X}^{\phi}_t\|_{H_1}^2\Big)\Big]^{\frac{1}{2}}
\nonumber\\
\!\!\!\!\!\!\!\!&&\cdot
\Big[\mathbb{E}\Big(\sup_{t\in[0,T\wedge\widetilde{\tau}_N^\epsilon]}\Big|\int_{t(\delta)}^{t}(1+\|X^{\epsilon,\alpha,\phi^\epsilon}_{s(\delta)}\|_{H_1}+\|\widehat{Y}^{\epsilon,\alpha}_s\|_{H_2})ds\Big|^2\Big)\Big]^{\frac{1}{2}}
\nonumber\\
\leq\!\!\!\!\!\!\!\!&&\Big\{\mathbb{E}\Big(\sup_{t\in[0,T\wedge\widetilde{\tau}_N^\epsilon]}\|X^{\epsilon,\alpha,\phi^\epsilon}_t-\bar{X}^{\phi}_t\|_{H_1}^2\Big)\Big\}^{\frac{1}{2}}
\nonumber\\
\!\!\!\!\!\!\!\!&&\cdot
\Big\{\mathbb{E}\Big[\int_{0}^{T\wedge\widetilde{\tau}_N^\epsilon}(1+\|X^{\epsilon,\alpha,\phi^\epsilon}_{t(\delta)}\|_{H_1}^2)dt\Big]+\sup_{t\in[0,T]}\mathbb{E}\|\widehat{Y}^{\epsilon,\alpha}_t\|_{H_2}^2\Big\}^{\frac{1}{2}}\delta^{\frac{1}{2}}
\nonumber\\
\leq\!\!\!\!\!\!\!\!&&C_{N,T}(1+\|x\|_{H_1}^2+\|y\|_{H_2}^2)\delta^{\frac{1}{2}},~~
\end{eqnarray}
where we used Lemma \ref{l8} in the last step.

The term $J_1(t)$ will be controlled as follows,
\begin{eqnarray*}
\!\!\!\!\!\!\!\!&&\mathbb{E}\Big[\sup_{t\in[0,T\wedge\widetilde{\tau}_N^\epsilon]}J_1(t)\Big]
\nonumber\\
\leq\!\!\!\!\!\!\!\!&&\mathbb{E}\sum_{k=0}^{[T\wedge\widetilde{\tau}_N^\epsilon/\delta]-1}\Big|\int_{k\delta}^{(k+1)\delta}\langle F_1(X^{\epsilon,\alpha,\phi^\epsilon}_{k\delta},\widehat{Y}^{\epsilon,\alpha}_s)-\bar{F}_1(X^{\epsilon,\alpha,\phi^\epsilon}_{k\delta}),\widetilde{Z}^{\epsilon}_{k\delta}\rangle_{H_1}ds\Big|
\nonumber\\
\leq\!\!\!\!\!\!\!\!&&\frac{C_T}{\delta}\sup_{0\leq k\leq[T\wedge\widetilde{\tau}_N^\epsilon/\delta]-1}\mathbb{E}\Big|\int_{k\delta}^{(k+1)\delta}\langle F_1(X^{\epsilon,\alpha,\phi^\epsilon}_{k\delta},\widehat{Y}^{\epsilon,\alpha}_s)-\bar{F}_1(X^{\epsilon,\alpha,\phi^\epsilon}_{k\delta}),\widetilde{Z}^{\epsilon}_{k\delta}\rangle_{H_1}ds\Big|
\nonumber\\
\leq\!\!\!\!\!\!\!\!&&\frac{C_T\alpha}{\delta}\sup_{0\leq k\leq[T\wedge\widetilde{\tau}_N^\epsilon/\delta]-1}\Big(\mathbb{E}\|X^{\epsilon,\alpha,\phi^\epsilon}_{k\delta}-\bar{X}^{\phi}_{k\delta}\|_{H_1}^2\Big)^{\frac{1}{2}}
\Big(\mathbb{E}\Big\|\int_0^{\frac{\delta}{\alpha}}F_1(X^{\epsilon,\alpha,\phi^\epsilon}_{k\delta},\widehat{Y}^{\epsilon,\alpha}_{s\alpha+k\delta})-\bar{F}_1(X^{\epsilon,\alpha,\phi^\epsilon}_{k\delta})ds\Big\|_{H_1}^2\Big)^{\frac{1}{2}}
\nonumber\\
\leq\!\!\!\!\!\!\!\!&&\frac{C_{N,T}\alpha}{\delta}\sup_{0\leq k\leq[T\wedge\widetilde{\tau}_N^\epsilon/\delta]-1}\Big(\int_0^{\frac{\delta}{\alpha}}\int_{r}^{\frac{\delta}{\alpha}}\Psi_k(s,r)dsdr\Big)^{\frac{1}{2}},
\end{eqnarray*}
where for each $0\leq r\leq s\leq \frac{\delta}{\alpha}$,
$$\Psi_k(s,r):=\mathbb{E}\Big[\langle F_1(X^{\epsilon,\alpha,\phi^\epsilon}_{k\delta},\widehat{Y}^{\epsilon,\alpha}_{s\alpha+k\delta})-\bar{F}_1(X^{\epsilon,\alpha,\phi^\epsilon}_{k\delta}),
F_1(X^{\epsilon,\alpha,\phi^\epsilon}_{k\delta},\widehat{Y}^{\epsilon,\alpha}_{r\alpha+k\delta})-\bar{F}_1(X^{\epsilon,\alpha,\phi^\epsilon}_{k\delta})\rangle_{H_1}\Big].$$

Now we devote to estimating the term $\Psi_k(s,r)$. For each $s>0$ and $\mathscr{F}_s$-measurable $H_1$-valued random variable $X$ and $H_2$-valued random variable $Y$, let $\{\widetilde{Y}_t^{\alpha,s,X,Y}\}_{t\geq 0}$ be a unique solution of the following SPDE
\begin{eqnarray*}
\left\{ \begin{aligned}
&dY_t=\frac{1}{\alpha}F_2(X,Y_t)dt+\frac{1}{\sqrt{\alpha}}G_2dW_t,~t\geq s,\\
&Y_s=Y.
\end{aligned}\right.
\end{eqnarray*}
According to the definition of process $\widehat{Y}_t^{\epsilon,\alpha}$, for each $k\in\mathbb{N}$ and $t\in[k\delta,(k+1)\delta]$, we can get that
$$\widehat{Y}_t^{\epsilon,\alpha}=\widetilde{Y}_t^{\alpha,k\delta,X^{\epsilon,\alpha,\phi^\epsilon}_{k\delta},\widehat{Y}_{k\delta}^{\epsilon,\alpha}},~\mathbb{P}\text{-a.s.},$$
which yields the following identity
\begin{eqnarray*}
\!\!\!\!\!\!\!\!&&\Psi_k(s,r)
\nonumber\\
=\!\!\!\!\!\!\!\!&&\mathbb{E}\Big[\langle F_1(X^{\epsilon,\alpha,\phi^\epsilon}_{k\delta},\widetilde{Y}_{s\alpha+k\delta}^{\alpha,k\delta,X^{\epsilon,\alpha,\phi^\epsilon}_{k\delta},\widehat{Y}_{k\delta}^{\epsilon,\alpha}})-\bar{F}_1(X^{\epsilon,\alpha,\phi^\epsilon}_{k\delta}),
F_1(X^{\epsilon,\alpha,\phi^\epsilon}_{k\delta},\widetilde{Y}_{r\alpha+k\delta}^{\alpha,k\delta,X^{\epsilon,\alpha,\phi^\epsilon}_{k\delta},\widehat{Y}_{k\delta}^{\epsilon,\alpha}})-\bar{F}_1(X^{\epsilon,\alpha,\phi^\epsilon}_{k\delta})\rangle_{H_1}\Big]
\nonumber\\
=\!\!\!\!\!\!\!\!&&\mathbb{E}\Big\{\mathbb{E}\Big[\langle F_1(X^{\epsilon,\alpha,\phi^\epsilon}_{k\delta},\widetilde{Y}_{s\alpha+k\delta}^{\alpha,k\delta,X^{\epsilon,\alpha,\phi^\epsilon}_{k\delta},\widehat{Y}_{k\delta}^{\epsilon,\alpha}})-\bar{F}_1(X^{\epsilon,\alpha,\phi^\epsilon}_{k\delta}),
\nonumber\\
\!\!\!\!\!\!\!\!&&~~~~~~~~~F_1(X^{\epsilon,\alpha,\phi^\epsilon}_{k\delta},\widetilde{Y}_{r\alpha+k\delta}^{\alpha,k\delta,X^{\epsilon,\alpha,\phi^\epsilon}_{k\delta},\widehat{Y}_{k\delta}^{\epsilon,\alpha}})-\bar{F}_1(X^{\epsilon,\alpha,\phi^\epsilon}_{k\delta})\rangle_{H_1}\big|\mathscr{F}_{k\delta}\Big]\Big\}
\nonumber\\
=\!\!\!\!\!\!\!\!&&\mathbb{E}\Big\{\mathbb{E}\Big[\langle F_1(x,\widetilde{Y}_{s\alpha+k\delta}^{\alpha,k\delta,x,y})-\bar{F}_1(x),
F_1(x,\widetilde{Y}_{r\alpha+k\delta}^{\alpha,k\delta,x,y})-\bar{F}_1(x)\rangle_{H_1}\Big]\Big|_{(x,y)=(X^{\epsilon,\alpha,\phi^\epsilon}_{k\delta},\widehat{Y}_{k\delta}^{\epsilon,\alpha})}\Big\},
\end{eqnarray*}
where the last step follows the fact that $X^{\epsilon,\alpha,\phi^\epsilon}_{k\delta}$ and $\widehat{Y}_{k\delta}^{\epsilon,\alpha}$ are $\mathscr{F}_{k\delta}$-measurable, and $\{\widetilde{Y}_{s\alpha+k\delta}^{\alpha,k\delta,x,y}\}_{s\geq0}$ is independent of $\mathscr{F}_{k\delta}$ for each fixed $x\in H_1$ and $y\in H_2$.

By the construction of $\widetilde{Y}_{s\alpha+k\delta}^{\alpha,k\delta,x,y}$, for each $k\in\mathbb{N}$,
\begin{eqnarray}\label{63}
\widetilde{Y}_{s\alpha+k\delta}^{\alpha,k\delta,x,y}\!\!\!\!\!\!\!\!&&=y+\frac{1}{\alpha}\int_{k\delta}^{s\alpha+k\delta}F_2(x,\widetilde{Y}_{r}^{\alpha,k\delta,x,y})dr+
\frac{1}{\sqrt{\alpha}}\int_{k\delta}^{s\alpha+k\delta}G_2dW_r
\nonumber\\
=\!\!\!\!\!\!\!\!&&y+\frac{1}{\alpha}\int_{0}^{s\alpha}F_2(x,\widetilde{Y}_{r+k\delta}^{\alpha,k\delta,x,y})dr+
\frac{1}{\sqrt{\alpha}}\int_{0}^{s\alpha}G_2dW_r^{k\delta}
\nonumber\\
=\!\!\!\!\!\!\!\!&&y+\int_{0}^{s}F_2(x,\widetilde{Y}_{r\alpha+k\delta}^{\alpha,k\delta,x,y})dr+
\int_{0}^{s}G_2d\bar{W}_r^{k\delta},
\end{eqnarray}
here we denote $W_r^{k\delta}:=W_{r+k\delta}-W_{k\delta}$ that is a shift version of $W_r$, furthermore, $\bar{W}_r^{k\delta}:=\frac{1}{\sqrt{\alpha}}W_{r\alpha}^{k\delta}$.

It is easy to see that the uniqueness of solutions to Eq.~(\ref{63}) and Eq.~(\ref{e3}) gives that the distribution of random sequence $\{\widetilde{Y}_{s\alpha+k\delta}^{\alpha,k\delta,x,y}\}_{0\leq s\leq\frac{\delta}{\alpha}}$ coincides with the distribution of $\{Y_s^{x,y}\}_{0\leq s\leq\frac{\delta}{\alpha}}$. Thus using (\ref{er}), (\ref{14}) and (\ref{24}), in terms of the Markov and time-homogenous properties of process $Y_s^{x,y}$, we obtain
\begin{eqnarray*}
\!\!\!\!\!\!\!\!&&\Psi_k(s,r)
\nonumber\\
=\!\!\!\!\!\!\!\!&&\mathbb{E}\Big\{\mathbb{E}\Big[\langle F_1(x,Y_s^{x,y})-\bar{F}_1(x),
F_1(x,Y_r^{x,y})-\bar{F}_1(x)\rangle_{H_1}\Big]\Big|_{(x,y)=(X^{\epsilon,\alpha,\phi^\epsilon}_{k\delta},\widehat{Y}_{k\delta}^{\epsilon,\alpha})}\Big\}
\nonumber\\
=\!\!\!\!\!\!\!\!&&\mathbb{E}\Big\{\mathbb{E}\Big\langle \mathbb{E}\Big[ F_1(x,Y_{s-r}^{x,z})-\bar{F}_1(x)\Big]\Big|_{\{z=Y_r^{x,y}\}},
F_1(x,Y_r^{x,y})-\bar{F}_1(x)\Big\rangle_{H_1}\Big\}
\nonumber\\
\leq\!\!\!\!\!\!\!\!&&\mathbb{E}\Big\{\mathbb{E}\Big\{\Big[1+\|x\|_{H_1}+\|Y_r^{x,y}\|_{H_2}\Big]e^{-\frac{(s-r)\kappa}{2}}
\cdot\Big[1+\|x\|_{H_1}+\|Y_r^{x,y}\|_{H_2}\Big]\Big\}\Big|_{(x,y)=(X^{\epsilon,\alpha,\phi^\epsilon}_{k\delta},\widehat{Y}_{k\delta}^{\epsilon,\alpha})}\Big\}
\nonumber\\
\leq\!\!\!\!\!\!\!\!&&C_T\mathbb{E}\Big[1+\|X^{\epsilon,\alpha,\phi^\epsilon}_{k\delta}\|_{H_1}^2+\|\widehat{Y}_{k\delta}^{\epsilon,\alpha}\|_{H_2}^2
\Big]e^{-\frac{(s-r)\kappa}{2}}
\nonumber\\
\leq\!\!\!\!\!\!\!\!&&C_T(1+\|x\|_{H_1}^2+\|y\|_{H_2}^2)e^{-\frac{(s-r)\kappa}{2}}.
\end{eqnarray*}
where $\kappa>0$ is defined in (\ref{er}), which leads to
\begin{eqnarray}\label{46}
\!\!\!\!\!\!\!\!&&\mathbb{E}\Big[\sup_{t\in[0,T\wedge\widetilde{\tau}_N^\epsilon]}J_1(t)\Big]
\nonumber\\
\leq\!\!\!\!\!\!\!\!&&C_{N,T}(1+\|x\|_{H_1}^2+\|y\|_{H_2}^2)\frac{\alpha}{\delta}\Big(\int_0^{\frac{\delta}{\alpha}}\int_{r}^{\frac{\delta}{\alpha}}e^{-\frac{(s-r)\kappa}{2}}dsdr\Big)^{\frac{1}{2}}
\nonumber\\
\leq\!\!\!\!\!\!\!\!&&C_{N,T}(1+\|x\|_{H_1}^2+\|y\|_{H_2}^2)\frac{\alpha}{\delta}\Big(\frac{\delta}{\alpha\kappa}-\frac{1}{\kappa^2}+\frac{1}{\kappa^2}e^{-\frac{\kappa\delta}{2\alpha}}\Big)^{\frac{1}{2}}.
\end{eqnarray}
Combining (\ref{45}), (\ref{46}) with (\ref{44}) implies that
\begin{eqnarray}\label{47}
\!\!\!\!\!\!\!\!&&\mathbb{E}\Big[\sup_{t\in[0,T\wedge\widetilde{\tau}_N^\epsilon]}\Big|\int_0^t\langle F_1(X^{\epsilon,\alpha,\phi^\epsilon}_{s(\delta)},\widehat{Y}^{\epsilon,\alpha}_s)-\bar{F}_1(X^{\epsilon,\alpha,\phi^\epsilon}_{s(\delta)}),\widetilde{Z}^{\epsilon}_{s(\delta)}\rangle_{H_1}ds\Big|\Big]
\nonumber\\
\leq\!\!\!\!\!\!\!\!&&C_{N,T}(1+\|x\|_{H_1}^2+\|y\|_{H_2}^2)\Big(\frac{\alpha}{\delta}+\frac{\alpha^{\frac{1}{2}}}{\delta^{\frac{1}{2}}}+\delta^{\frac{1}{2}}\Big).
\end{eqnarray}

\textbf{Step 4}: After all  preparations above, we are in the position to derive the desired results on the convergence of $\widetilde{Z}^{\epsilon}_t$ in distribution.

For any $\varepsilon_0>0$, using Chebyshev's inequality we obtain that
\begin{eqnarray}\label{48}
\!\!\!\!\!\!\!\!&&\mathbb{P}\Big\{\Big(\sup_{t\in[0,T]}\|\widetilde{Z}^{\epsilon}_t\|_{H_1}^2+\theta_1\int_0^T\|\widetilde{Z}^{\epsilon}_t\|_{V_1}^{\gamma_1}dt\Big)^{\frac{1}{2}}>\varepsilon_0\Big\}
\nonumber\\
\leq\!\!\!\!\!\!\!\!&&\frac{1}{\varepsilon_0}\mathbb{E}\Big[\Big(\sup_{t\in[0,T]}\|\widetilde{Z}^{\epsilon}_t\|_{H_1}^2+\theta_1\int_0^T\|\widetilde{Z}^{\epsilon}_t\|_{V_1}^{\gamma_1}dt\Big)^{\frac{1}{2}}\mathbf{1}_{\{T\leq\widetilde{\tau}_N^\epsilon\}}\Big]
\nonumber\\
\!\!\!\!\!\!\!\!&&+\frac{1}{\varepsilon_0}\mathbb{E}\Big[\Big(\sup_{t\in[0,T]}\|\widetilde{Z}^{\epsilon}_t\|_{H_1}^2+\theta_1\int_0^T\|\widetilde{Z}^{\epsilon}_t\|_{V_1}^{\gamma_1}dt\Big)^{\frac{1}{2}}\mathbf{1}_{\{T>\widetilde{\tau}_N^\epsilon\}}\Big].
\end{eqnarray}
We now focus on the second term of right hand side of (\ref{48}), by using Markov's inequality and H\"{o}lder's inequality,
\begin{eqnarray}\label{49}
\!\!\!\!\!\!\!\!&&\frac{1}{\varepsilon_0}\mathbb{E}\Big[\Big(\sup_{t\in[0,T]}\|\widetilde{Z}^{\epsilon}_t\|_{H_1}^2+\theta_1\int_0^T\|\widetilde{Z}^{\epsilon}_t\|_{V_1}^{\gamma_1}dt\Big)^{\frac{1}{2}}\mathbf{1}_{\{T>\widetilde{\tau}_N^\epsilon\}}\Big]
\nonumber\\
\leq\!\!\!\!\!\!\!\!&&\frac{1}{\varepsilon_0}\Big[\mathbb{E}\Big(\sup_{t\in[0,T]}\|\widetilde{Z}^{\epsilon}_t\|_{H_1}^2+\theta_1\int_0^T\|\widetilde{Z}^{\epsilon}_t\|_{V_1}^{\gamma_1}dt\Big)\Big]^{\frac{1}{2}}\Big[\mathbb{P}(T>\widetilde{\tau}_N^\epsilon)\Big]^{\frac{1}{2}}
\nonumber\\
\leq\!\!\!\!\!\!\!\!&&\frac{C(1+\|x\|_{H_1}^2+\|y\|_{H_2}^2)}{\varepsilon_0\sqrt{N}},
\end{eqnarray}
where we used Lemma \ref{l5} and Lemma \ref{l6} in the last step.

Finally, letting $\delta:=\alpha^{\frac{1}{2}}$ and collecting (\ref{37}), (\ref{43}) and (\ref{47}), by the condition (\ref{h5}), we can get that
$$\limsup_{\epsilon\to 0}\mathbb{P}\Big\{\sup_{t\in[0,T]}\|\widetilde{Z}^{\epsilon}_t\|_{H_1}^2+\theta_1\int_0^T\|\widetilde{Z}^{\epsilon}_t\|_{V_1}^{\gamma_1}dt>\varepsilon_0\Big\}\leq \frac{C(1+\|x\|_{H_1}^2+\|y\|_{H_2}^2)}{\sqrt{N}},$$
where the constant $C$ does not depend on $N$, which implies the desired assertion by taking $N\to \infty$. We complete the verification of \textbf{Condition (A)} (i).    \hspace{\fill}$\Box$
\end{proof}

\subsection{Compactness}
This subsection is devoted to proving the compactness result, which implies that the rate function $I$ defined in (\ref{rf}) is a good rate function. After that, combining with Theorem \ref{t4}, we prove that  $\{X^{\epsilon,\alpha}\}$ satisfies the Laplace principle (Theorem \ref{t1}), which is equivalent to the LDP on $C([0,T]; H_1)\cap L^{\gamma_1}([0,T]; V_1)$.

\begin{theorem}\label{t5}
Assume that the conditions in Theorem \ref{t1} hold. For fixed $M>0$, $x\in H_1$ and $y\in H_2$, let $K_M=\{\bar{X}^{\phi}:\phi\in S_M\}$, here $\bar{X}^{\phi}$ is a unique solution to the skeleton equation (\ref{e2}). Then $K_M$ is a compact set of $C([0,T]; H_1)\cap L^{\gamma_1}([0,T]; V_1)$.
\end{theorem}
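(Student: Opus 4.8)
The plan is to establish \emph{sequential} compactness of $K_M$ in the metric space $\big(C([0,T];H_1)\cap L^{\gamma_1}([0,T];V_1),d\big)$, with $d$ as in \eqref{rho}; since this space is metric, sequential compactness is equivalent to compactness. Thus I take an arbitrary sequence $\{\phi_n\}_{n\geq 1}\subset S_M$ and study $\{\bar X^{\phi_n}\}$. Because $S_M$ with the weak topology is a compact metrizable (Polish) space, after passing to a subsequence (not relabelled) there is $\phi\in S_M$ with $\phi_n\to\phi$ weakly in $L^2([0,T];U)$; in particular $\int_a^b(\phi_{n,s}-\phi_s)\,ds\to 0$ weakly in $U$ for all $0\le a<b\le T$. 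By Lemma \ref{l5}, the energy bound
\[
\sup_{n}\Big\{\sup_{t\in[0,T]}\|\bar X^{\phi_n}_t\|_{H_1}^2+\theta_1\int_0^T\|\bar X^{\phi_n}_t\|_{V_1}^{\gamma_1}\,dt\Big\}\le C(1+\|x\|_{H_1}^2)
\]
holds and so does the corresponding bound for $\bar X^{\phi}$, so it only remains to prove $d(\bar X^{\phi_n},\bar X^{\phi})\to 0$ along this subsequence.

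Set $Z_n:=\bar X^{\phi_n}-\bar X^{\phi}$, which solves (in $V_1^*$) $\tfrac{d}{dt}Z_n=[A(\bar X^{\phi_n})-A(\bar X^{\phi})]+[\bar F_1(\bar X^{\phi_n})-\bar F_1(\bar X^{\phi})]+[G_1(\bar X^{\phi_n})-G_1(\bar X^{\phi})]\phi_n+G_1(\bar X^{\phi})(\phi_n-\phi)$ with $Z_n(0)=0$. Applying the energy equality to $\|Z_n(t)\|_{H_1}^2$, using $({\mathbf{A}}{\mathbf{2}})$ to absorb the $A$-difference against $\|G_1(\bar X^{\phi_n})-G_1(\bar X^{\phi})\|_{L_2(U,H_1)}^2$, the Lipschitz continuity of $\bar F_1$ established in Lemma \ref{l5}, and Young's inequality on the term carrying $\phi_n$, I arrive, as in \eqref{1}, at
\[
\|Z_n(t)\|_{H_1}^2+\theta_1\int_0^t\|Z_n(s)\|_{V_1}^{\gamma_1}\,ds\le\int_0^t\big(C+\rho(\bar X^{\phi}_s)+\|\phi_{n,s}\|_U^2\big)\|Z_n(s)\|_{H_1}^2\,ds+2R_n(t),
\]
where $R_n(t):=\int_0^t\langle G_1(\bar X^{\phi}_s)(\phi_{n,s}-\phi_s),Z_n(s)\rangle_{H_1}\,ds$. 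The weight $s\mapsto C+\rho(\bar X^{\phi}_s)+\|\phi_{n,s}\|_U^2$ is integrable on $[0,T]$ with integral bounded uniformly in $n$ (by the growth bound on $\rho$, the uniform energy bound above, and $\phi_n\in S_M$); note that here the deterministic uniform bounds replace the stopping-time localization used in Theorem \ref{t4}. Hence Gronwall's lemma reduces the proof to showing $\sup_{t\in[0,T]}|R_n(t)|\to 0$ as $n\to\infty$.

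The estimate of $R_n$ is the main obstacle, since both $\phi_{n}-\phi$ and $Z_n$ move with $n$, so weak convergence of $\phi_n-\phi$ alone cannot be applied directly; I would handle it exactly as in Step 2 of the proof of Theorem \ref{t4}. Fixing $\delta>0$ and writing $s(\delta):=[s/\delta]\delta$, I decompose the integrand of $R_n$ as
\[
\langle G_1(\bar X^{\phi}_s)(\phi_{n,s}-\phi_s),Z_n(s)-Z_n(s(\delta))\rangle_{H_1}+\langle\big(G_1(\bar X^{\phi}_s)-G_1(\bar X^{\phi}_{s(\delta)})\big)(\phi_{n,s}-\phi_s),Z_n(s(\delta))\rangle_{H_1}+\langle G_1(\bar X^{\phi}_{s(\delta)})(\phi_{n,s}-\phi_s),Z_n(s(\delta))\rangle_{H_1}.
\]
The first two terms are bounded via the Cauchy--Schwarz inequality, the growth bound $\|G_1(u)\|_{L_2(U,H_1)}^2\le C(1+\|u\|_{H_1}^2)$ from $({\mathbf{A}}{\mathbf{2}})$, the uniform energy bounds, and the time-increment estimate of Lemma \ref{l10} applied to both $\bar X^{\phi_n}$ and $\bar X^{\phi}$, yielding $C_M\delta^{1/4}$ uniformly in $n$; the leftover integral over $[t(\delta),t]$ of the third term is $O(\delta^{1/2})$ uniformly in $n$. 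On the remaining full subintervals the third term reduces to the finite sum $\sum_{k}\big\langle G_1(\bar X^{\phi}_{k\delta})\int_{k\delta}^{(k+1)\delta}(\phi_{n,s}-\phi_s)\,ds,\,Z_n(k\delta)\big\rangle_{H_1}$, and the key point is that each $G_1(\bar X^{\phi}_{k\delta})$ is a Hilbert--Schmidt, hence compact, operator, so $G_1(\bar X^{\phi}_{k\delta})\int_{k\delta}^{(k+1)\delta}(\phi_{n,s}-\phi_s)\,ds\to 0$ strongly in $H_1$ as $n\to\infty$ for each of the finitely many $k$, while $\|Z_n(k\delta)\|_{H_1}$ stays bounded; hence this sum vanishes as $n\to\infty$ for each fixed $\delta$. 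Letting first $n\to\infty$ and then $\delta\to 0$ gives $\sup_{t\in[0,T]}|R_n(t)|\to 0$. Substituting back into the Gronwall estimate yields $\sup_{t\in[0,T]}\|Z_n(t)\|_{H_1}^2+\theta_1\int_0^T\|Z_n(t)\|_{V_1}^{\gamma_1}\,dt\to 0$, i.e.\ $d(\bar X^{\phi_n},\bar X^{\phi})\to 0$ with $\phi\in S_M$. Since $\{\phi_n\}$ was arbitrary, $K_M$ is sequentially compact, hence compact, in $C([0,T];H_1)\cap L^{\gamma_1}([0,T];V_1)$.
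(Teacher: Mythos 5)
Your proposal is correct and follows essentially the same route as the paper's proof: extract a weakly convergent subsequence in $S_M$, run the energy estimate for $\bar X^{\phi_n}-\bar X^{\phi}$ with $({\mathbf{A}}{\mathbf{2}})$ and Gronwall, and control the cross term $\int_0^t\langle G_1(\bar X^{\phi}_s)(\phi_{n,s}-\phi_s),Z_n(s)\rangle_{H_1}\,ds$ by the same time-discretization decomposition, using Lemma \ref{l10} for the $O(\delta^{1/4})$ pieces and the compactness (Hilbert--Schmidt property) of $G_1(\bar X^{\phi}_{k\delta})$ to make the residual sum vanish as $n\to\infty$, then let $\delta\to0$. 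The only cosmetic differences are that you compare directly with the skeleton solution $\bar X^{\phi}$ for the weak limit $\phi$ (omitting the paper's intermediate weak-limit identification) and phrase the conclusion as sequential compactness rather than precompactness plus closedness, which amounts to the same thing.
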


\begin{proof}
Take any sequence $\{\bar{X}^{\phi^n}\}$ in $K_M$, which is the solution of Eq.~(\ref{e2}) with $\phi^n\in S_M$ instead of $\phi$, i.e.,
\begin{equation*}
\left\{ \begin{aligned}
&\frac{d\bar{X}^{\phi^n}_t}{dt}=\big[A(\bar{X}^{\phi^n}_t)+\bar{F}_1(\bar{X}^{\phi^n}_t)\big]+G_1(\bar{X}^{\phi^n}_t)\phi^n_t,\\
&\bar{X}^{\phi^n}_0=x\in H_1.
\end{aligned} \right.
\end{equation*}
Note that $S_M$ is a bounded closed subset in $L^2([0,T]; U)$, hence it is weakly compact, so there exists a subsequence
also denoted by $\phi^n$, which weakly converges to a
limit $\phi\in S_M$ in $L^2([0,T]; U)$. Then the priori estimates formulated in Lemma \ref{l5} imply that
\begin{eqnarray*}
\bar{X}^{\phi^n}\to\!\!\!\!\!\!\!\!&&\bar{X}^{\phi}~\text{weakly star in}~L^{\infty}([0,T];H_1),
\nonumber\\
\bar{X}^{\phi^n}\to\!\!\!\!\!\!\!\!&&\bar{X}^{\phi}~\text{weakly in}~L^{\gamma_1}([0,T];V_1).
\end{eqnarray*}
It is easy to show that $\bar{X}^{\phi}$ is the unique solution of the following limit equation
$$\frac{d\bar{X}^{\phi}_t}{dt}=\big[A(\bar{X}^{\phi}_t)+\bar{F}_1(\bar{X}^{\phi}_t)\big]+G_1(\bar{X}^{\phi}_t)\phi_t,~\bar{X}^{\phi}_0=x.$$

In order to study the compactness of set $K_M$, it suffices to prove that $\bar{X}^{\phi^n}$ strong converges to $\bar{X}^{\phi}$ in $C([0,T]; H_1)\cap L^{\gamma_1}([0,T]; V_1)$ as $n\to \infty$. Denote $\widehat{Z}^{n}_t:=\bar{X}^{\phi^n}_t-\bar{X}^{\phi}_t$ fulfilling
\begin{equation*}
\left\{ \begin{aligned}
&\frac{d\widehat{Z}^{n}_t}{dt}=\big[A(\bar{X}^{\phi^n}_t)-A(\bar{X}^{\phi}_t)+\bar{F}_1(\bar{X}^{\phi^n}_t)-\bar{F}_1(\bar{X}^{\phi}_t)\big]dt+\big[G_1(\bar{X}^{\phi^n}_t)\phi^n_t-G_1(\bar{X}^{\phi}_t)\phi_t\big]dt,\\
&\widehat{Z}^{n}_0=0.
\end{aligned} \right.
\end{equation*}
It is easy to get the following energy estimate by the condition $({\mathbf{A}}{\mathbf{2}})$ and Young's inequality,
\begin{eqnarray*}
\|\widehat{Z}^{n}_t\|_{H_1}^2+\theta_1\int_0^t\|\widehat{Z}^{n}_s\|_{V_1}^{\gamma_1}ds\leq\!\!\!\!\!\!\!\!&&\int_0^t\big(C+\rho(\bar{X}^{\phi}_s)+\|\phi^n_s\|_U^2\big)\|\widehat{Z}^{n}_s\|_{H_1}^2ds
\nonumber\\
\!\!\!\!\!\!\!\!&&+2\int_0^t\langle G_1(\bar{X}^{\phi}_s)(\phi^n_s-\phi_s),\widehat{Z}^{n}_s\rangle_{H_1}ds,
\end{eqnarray*}
where $C>0$ is a constant independent of $n$.

The priori estimate (\ref{2}) implies that there exists a constant $K_0>0$ independent of $n$ such that
\begin{eqnarray}\label{50}
\sup_n\Big[\sup_{t\in[0,T]}(\|\bar{X}^{\phi^n}_t\|_{H_1}^2+\|\bar{X}^{\phi}_t\|_{H_1}^2)+\theta_1\int_0^T(\|\bar{X}^{\phi^n}_t\|_{V_1}^{\gamma_1}+\|\bar{X}^{\phi}_t\|_{V_1}^{\gamma_1})dt\Big]=K_0.
\end{eqnarray}
Therefore, making use of Gronwall's lemma and (\ref{50}), it follows that
\begin{eqnarray}\label{51}
\sup_{t\in[0,T]}\|\widehat{Z}^{n}_t\|_{H_1}^2+\theta_1\int_0^T\|\widehat{Z}^{n}_t\|_{V_1}^{\gamma_1}dt\leq C_{K_0,M,T}\sum_{i=1}^{4}I_{n,i},
\end{eqnarray}
here we denote
\begin{eqnarray*}
I_{n,1}:=\!\!\!\!\!\!\!\!&&\int_0^{T}\Big|\langle G_1(\bar{X}^{\phi}_s)(\phi^n_s-\phi_s),\widehat{Z}^{n}_s-\widehat{Z}^{n}_{s(\delta)}\rangle_{H_1}\Big|ds,
\nonumber\\
I_{n,2}:=\!\!\!\!\!\!\!\!&&\int_0^{T}\Big|\langle \big(G_1(\bar{X}^{\phi}_s)-G_1(\bar{X}^{\phi}_{s(\delta)})\big)(\phi^n_s-\phi_s),\widehat{Z}^{n}_{s(\delta)}\rangle_{H_1}\Big|ds,
\nonumber\\
I_{n,3}:=\!\!\!\!\!\!\!\!&&\sup_{t\in[0,T]}\Big|\int_{t(\delta)}^{t}\langle G_1(\bar{X}^{\phi}_{s(\delta)})(\phi^n_s-\phi_s),\widehat{Z}^{n}_{s(\delta)}\rangle_{H_1}ds\Big|,
\nonumber\\
I_{n,4}:=\!\!\!\!\!\!\!\!&&\sup_{t\in[0,T]}\sum_{k=0}^{[t/\delta]-1}\Big|\langle G_1(\bar{X}^{\phi}_{k\delta})\int_{k\delta}^{(k+1)\delta}(\phi^n_s-\phi_s)ds,\widehat{Z}^{n}_{k\delta}\rangle_{H_1}\Big|.
\end{eqnarray*}
Following the almost same arguments as in the proof of Theorem \ref{t4}, one can obtain
\begin{eqnarray}
I_{n,1}\leq\!\!\!\!\!\!\!\!&&\Big\{\int_0^TC(1+\|\bar{X}^{\phi}_t\|_{H_1}^2)\|\phi^n_t-\phi_t\|_U^2dt\Big\}^{\frac{1}{2}}
\Big\{\int_0^T2(\|\bar{X}^{\phi}_t-\bar{X}^{\phi}_{t(\delta)}\|_{H_1}^2+\|\bar{X}^{\phi^n}_t-\bar{X}^{\phi^n}_{t(\delta)}\|_{H_1}^2)dt\Big\}^{\frac{1}{2}}\nonumber
\nonumber\\
\leq\!\!\!\!\!\!\!\!&&C_{K_0,M}\delta^{\frac{1}{4}}(1+\|x\|_{H_1}),\label{52}
\\
I_{n,2}\leq\!\!\!\!\!\!\!\!&&\Big\{\int_0^T\|\phi^n_t-\phi_t\|_U^2dt\Big\}^{\frac{1}{2}}
\Big\{\int_0^T\|\bar{X}^{\phi}_t-\bar{X}^{\phi}_{t(\delta)}\|_{H_1}^2\|\bar{X}^{\phi^n}_{t(\delta)}-\bar{X}^{\phi}_{t(\delta)}\|_{H_1}^2dt\Big\}^{\frac{1}{2}}
\nonumber\\
\leq\!\!\!\!\!\!\!\!&&C_{K_0,M}\delta^{\frac{1}{4}}(1+\|x\|_{H_1}),\label{53}
\\
I_{n,3}\leq\!\!\!\!\!\!\!\!&&\delta^{\frac{1}{2}}\Big\{\int_{0}^{T}(1+\|\bar{X}^{\phi}_{t(\delta)}\|_{H_1}^2)\|\phi^n_t-\phi_t\|_U^2dt\Big\}^{\frac{1}{2}}
\Big\{\sup_{t\in[0,T]}\|\bar{X}^{\phi^n}_{t}-\bar{X}^{\phi}_{t}\|_{H_1}^2\Big\}^{\frac{1}{2}}
\nonumber\\
\leq\!\!\!\!\!\!\!\!&&C_{K_0,M}\delta^{\frac{1}{2}}.\label{54}
\end{eqnarray}
For the term $I_{n,4}$, since $G_1(\bar{X}^{\phi}_{k\delta})$ is a compact operator, the sequence $G_1(\bar{X}^{\phi}_{k\delta})\int_{k\delta}^{(k+1)\delta}(\phi^n_s-\phi_s)ds$ strongly converges to $0$ in $H_1$ for any fixed $k$, as $n\to\infty$. This combines with the boundedness of $I_{n,4}$ implies that $\lim_{n\to\infty}I_{n,4}=0$.

Furthermore, according to (\ref{52})-(\ref{54}), for any $\delta>0$,
$$\lim_{n\to\infty}\Big\{\sup_{t\in[0,T]}\|\widehat{Z}^{n}_t\|_{H_1}^2+\theta_1\int_0^T\|\widehat{Z}^{n}_t\|_{V_1}^{\gamma_1}dt\Big\}\leq C\delta^{\frac{1}{4}},$$
where the constant $C>0$ is independent of $\delta$.

Taking $\delta\to 0$, one can show that every sequence in $K_M$ has a
convergent subsequence, therefore $K_M$ is a pre-compact subset of $C([0,T];H_1)\cap L^{\gamma_1}([0,T]; V_1)$.

It suffices to prove that $K_M$ is a closed subset of $C([0,T];H_1)\cap L^{\gamma_1}([0,T]; V_1)$.  It should be noted that the above arguments also implies that there exists a subsequence $\{\bar{X}^{\phi^{n_k}},k\geq1\}$ converges to an element $\bar{X}^{\phi}\in K_M$ in the same topology of $C([0,T];H_1)\cap L^{\gamma_1}([0,T];V_1)$, which yields the desired results. Hence complete the verification of \textbf{Condition (A)} (ii).   \hspace{\fill}$\Box$
\end{proof}

Now we are in the position to complete the proof of our main results in this paper.\\
\textbf{Proof of Theorem \ref{t1}}. Following from Theorem \ref{t4} and Theorem \ref{t5}, we can infer that  $\{X^{\epsilon,\alpha}\}$ fulfills the Laplace principle by Lemma 2.4, which is equivalent to the LDP on $C([0,T]; H_1)\cap L^{\gamma_1}([0,T]; V_1)$ with a good rate function $I$ defined in (\ref{rf}). \hspace{\fill}$\Box$\\

\noindent\textbf{Proof of Theorem \ref{t3}}.
 Note that the condition $({\mathbf{A}}{\mathbf{2}})$ is only used to verify the additional convergence in $L^\alpha([0,T]; V_1)$, so if we only concern the LDP on $C([0,T];H_1)$, one can follow the similar arguments as in proof of Theorem \ref{t1} to show Theorem \ref{t3} directly. Since the proof is just a very minor modification of Theorem \ref{t1}, we omit the details here.  \hspace{\fill}$\Box$

\vspace{5mm}
\noindent\textbf{Acknowledgements} {The authors would like to thank anonymous referees for the suggestions and comments, and also thank Xiaobin Sun for helpful discussion. The research
of S. Li is supported by NSFC (No.~12001247),
 NSF of Jiangsu Province (No.~BK20201019),  NSF of Jiangsu Higher Education Institutions of China (No. 20KJB110015)
and the Foundation of Jiangsu Normal University (No.~19XSRX023). The research of W. Liu is supported by NSFC (No.~11822106, 11831014, 12090011) and the PAPD of Jiangsu Higher Education Institutions.}

\end{document}